\newcommand{\E}{{\mathbb E}}
\newcommand{\F}{{\mathbb F}}
\renewcommand{\P}{{\mathbb P}}
\newcommand{\Q}{{\mathbb Q}}
\newcommand{\C}{{\mathbb C}}
\newcommand{\R}{{\mathbb R}}
\renewcommand{\S}{{\mathbb S}}
\newcommand{\N}{{\mathbb N}}
\newcommand{\Fcal}{{\mathcal F}}
\newcommand{\Gcal}{{\mathcal G}}
\newcommand{\Kcal}{{\mathcal K}}
\newcommand{\Wcal}{{\mathcal W}}
\newcommand{\lkr}{L\'evy-Khintchine representation}
\newcommand{\Pol}{{\rm Pol}}
\newcommand{\im}{{\rm Im}}
\renewcommand{\colon}{{\ :\ }}
\DeclareMathOperator{\tr}{Tr}
\newcommand{\de}{{d}}
\newcommand{\e}{\varepsilon}
\newcommand{\supp}{{\textup{supp}}}
\newcommand{\Arg}{{\textup{Arg}}}
\newcommand{\Int}{{\textup{int}}}
\newcommand{\Tr}{{\textup{Tr}}}
\newcommand{\bk}{{\bm k}}
\newcommand{\bn}{{\bm n}}
\newcommand{\fdot}{{\,\cdot\,}}
\newcommand{\kk}{10}
\newtheorem{theorem}{Theorem}
\newtheorem{lemma}[theorem]{Lemma}
\theoremstyle{definition}
\newtheorem{definition}[theorem]{Definition}
\newtheorem{remark}[theorem]{Remark}
\newtheorem{assumption}{Assumption}
\numberwithin{equation}{section}
\numberwithin{theorem}{section}
\newenvironment{example}
  {\pushQED{\qed}\examplex}
  {\popQED\endexamplex}
\begin{document}

\title{Polynomial jump-diffusions on the unit simplex\footnote{Acknowledgment: The authors wish to thank Thomas Bernhardt and Thomas Krabichler for useful comments and fruitful discussions. Martin Larsson and Sara Svaluto-Ferro gratefully acknowledges financial support by the Swiss National Science Foundation (SNF) under grant 205121\textunderscore163425.}}

\author{Christa Cuchiero\thanks{Vienna University, Oskar-Morgenstern-Platz 1, A-1090 Wien, Austria, christa.cuchiero@univie.ac.at}
\and Martin Larsson\thanks{Department of Mathematics, ETH Zurich, R\"amistrasse 101, CH-8092, Zurich, Switzerland, martin.larsson@math.ethz.ch.}
\and Sara Svaluto-Ferro\thanks{Department of Mathematics, ETH Zurich, R\"amistrasse 101, CH-8092, Zurich, Switzerland, sara.svaluto@math.ethz.ch.}}

\maketitle

\begin{abstract}
Polynomial jump-diffusions constitute a class of tractable stochastic models with wide applicability in areas such as mathematical finance and population genetics. We provide a full parameterization of polynomial jump-diffusions on the unit simplex under natural structural hypotheses on the jumps. As a stepping stone, we characterize well-posedness of the martingale problem for polynomial operators on general compact state spaces.
\end{abstract}

\noindent\textbf{Keywords:} 
Polynomial processes, unit simplex, stochastic models with jumps, 
Wright-Fisher diffusion, stochastic invariance \\

\noindent \textbf{MSC (2010) Classification:} 60J25, 60H30

%\tableofcontents
%\newpage

\section{Introduction}

Tractable families of Markov processes on the \emph{unit simplex}, 
featuring both \emph{diffusion and jump} components, are challenging to construct, 
yet play an important role in a host of applications. 
These include population genetics \citep{E:11,EM:13}, dynamic modeling of probabilities \citep{GJ:06}, 
and mathematical finance, in particular stochastic portfolio theory \citep{F:02,FK:05}. 
The present article addresses this challenge by specifying Markovian jump-diffusions on the unit simplex 
that are {\em polynomial}, meaning that the (extended) generator 
maps any polynomial to a polynomial of the same or lower degree.

Polynomial processes were introduced in \cite{Cuchiero/etal:2012}, see also \cite{Filipovic/Larsson:2016}, and are inherently tractable. Indeed, any polynomial jump-diffusion
\begin{enumerate}
\item is an It\^o semimartingale, meaning that its semimartingale characteristics are absolutely continuous with respect to Lebesgue measure. This justifies the name jump-diffusion in the sense of \citet[Chapter III.2]{Jacod/Shiryaev:2003}; 
\item admits explicit expressions for all moments in terms of  matrix exponentials. 
\end{enumerate}
The computational advantage associated with the second property has been exploited in a large variety of problems. In particular, applications in mathematical finance include interest rates \citep{Delbaen/Shirakawa:2002,FLT:16}, credit risk \citep{Ackerer/Filipovic:2016}, stochastic volatility models \citep{ack_fil_pul_16}, stochastic portfolio theory \citep{C:16,CGGPST:16}, life insurance liabilities \citep{BZ:16}, and variance swaps \citep{Filipovic/Gourier/Mancini:2015}.

In addition, polynomial jump-diffusions are highly flexible in that they allow for a wide range of state spaces
-- the unit simplex being one of them -- and a multitude of possible jump and diffusion phenomena. This stands in contrast to the thoroughly studied and frequently used sub-class of affine processes. Any affine jump-diffusion that admits moments of all orders is polynomial, but there are many polynomial jump-diffusions that are not affine. In particular, an affine process on a compact and connected state space is necessarily deterministic; see \cite{KL:16}. Thus our interest in the unit simplex forces us to look beyond the affine class.

Polynomial diffusions (without jumps) on the unit simplex have already appeared numerous times in the literature. 
In population genetics, prototypical diffusion processes on the unit simplex known as 
\emph{Wright-Fisher diffusions}, or \emph{Kimura diffusions}, 
arise naturally as infinite population limits of discrete Wright-Fisher models 
for allele prevalence in a population of fixed size; see \cite{E:11} for a survey. 
In finance, similar processes have appeared in \cite{GJ:06} under the name of 
\emph{multivariate Jacobi processes}. All these diffusions turn out to be polynomial, and a full characterization is provided in \citet[Section~6.3]{Filipovic/Larsson:2016} by means of necessary and sufficient parameter restrictions on the drift and diffusion coefficients. One could also study other compact state spaces, as has been done in \cite{LP:17}, where polynomial diffusions on compact quadric sets are considered.

As these papers all focus on the case without jumps, it is natural to ask what happens in the jump-diffusion case, 
where the literature is much less developed. This case is considered by \cite{Cuchiero/etal:2012}, however without treating questions of existence, uniqueness, and parameterization for polynomial jump-diffusions on specific state spaces. To analyze these questions on the unit simplex,
the technical difficulties associated with the diffusion case remain, 
arising from the fact that the unit simplex is a non-smooth stratified space \citep[Chapter~1]{EM:13}, 
and that the diffusion coefficient degenerates at the boundary. 
This complicates the analysis, and precludes the use of standard results regarding existence and 
regularity of solutions to the corresponding Kolmogorov backward equations. 
Additionally, in the jump case, the drift and diffusion interact with the (small) jumps orthogonal to the boundary, 
which leads to further mathematical challenges.

Allowing for jumps is however not only of theoretical interest, 
but has practical relevance as well. A concrete illustration of this fact comes from stochastic 
portfolio theory \citep{F:02, FK:09}, where one is interested in the market weights 
$X_i=S_i/(S_1+\cdots+S_d)$ computed from the market capitalizations 
$S_i$, $i=1, \ldots, d$, of the constituents of a large stock index 
such as the S\&P~500 or the MSCI World Index. 
The time evolution of the vector of market weights is thus 
a stochastic process on the unit simplex (see Figure~\ref{fig:1}). 
To model the market weight process, polynomial diffusion models without jumps have 
been found capable of matching certain empirically 
observed features such as typical shape and fluctuations of capital distribution 
curves \citep{FK:05,C:16,CGGPST:16} when calibrated to \emph{jump-cleaned} data. However, the absence of jumps is a deficiency of these models. Indeed, an inspection of market data shows that jumps do occur and 
are an important feature of the dynamics of the market weights.
%have 
%been found particularly well-suited and capable of matching empirically 
%observed features such as typical shape and fluctuations of capital distribution 
%curves \citep{FK:05,C:16,CGGPST:16}. However, an inspection of market data shows that jumps do occur and 
%are an important feature of the dynamics of the market weights. 
This is clearly visible in Figure \ref{fig:2} where, for illustrative purposes, 
we have extracted three companies from the MSCI World Index, 
whose market weights exhibit jumps in the period from August 2006 to October 2007.
This application from stochastic portfolio theory underlines the 
importance of specifying jump structures within the polynomial framework. We elaborate on this in Section~\ref{S:SPT_appl}.

\begin{figure}
\begin{minipage}[h]{7cm}
	\centering
	\includegraphics[width=7cm]{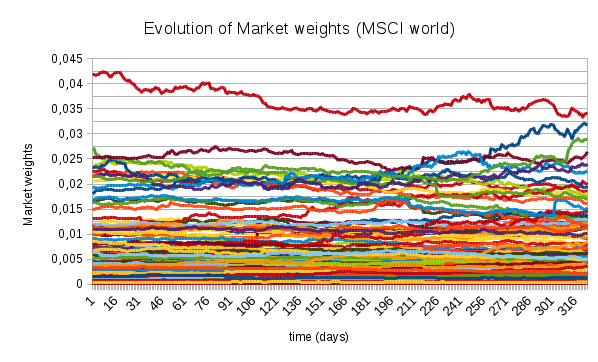}
	\caption{\small{Market weights of the MSCI World Index, August 2006 -- October 2007}}
	\label{fig:1}
\end{minipage}
\hfill
\begin{minipage}[h]{7cm}
	\centering
	\includegraphics[width=7cm]{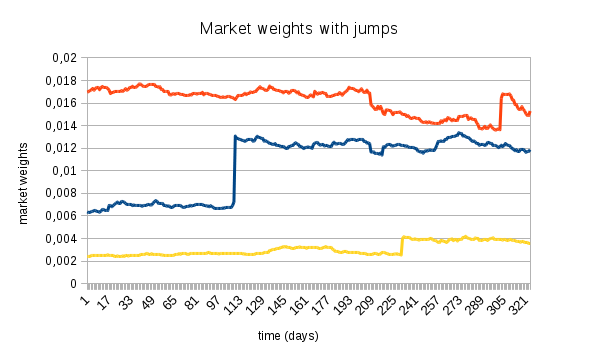}
	\caption{\small{Example of market weights which exhibit jumps}}
	\label{fig:2}
\end{minipage}
\end{figure}

Another natural application of the results developed in this paper arises in default risk modeling following the framework of \cite{Jarrow1998} and \cite{KT:17}. One is then interested in modeling a $[0,1]$-valued stochastic recovery rate which remains at level $1$ for extended periods of time, while occasionally performing excursions away from $1$. Polynomial jump-diffusion specifications turn out to be capable of producing such behavior, while at the same time maintaining tractability.  Further details are given in Section~\ref{S:Def_appl}.

Let us now briefly summarize our main results. Our starting point is a linear operator $\Gcal$ 
whose domain consists of polynomials on a compact state space $E$ 
(initially general, but soon taken to be the unit simplex) and which maps polynomials to polynomials of the same or lower degree. We 
study the corresponding martingale problem for which well-posedness holds if and only 
if $\Gcal$ satisfies the positive maximum principle and is conservative. 
In this case it is of L\'evy type, specified by a diffusion, drift, and jump triplet $(a,b,\nu)$;  
see Theorem~\ref{lem2} and Theorem~\ref{thm3}. We emphasize that not only existence, 
but also uniqueness of solutions to the martingale problem is obtained. 
This is yet another attractive feature of polynomial jump-diffusions on compact state spaces; 
in general, uniqueness is notoriously difficult to establish in the absence of ellipticity or Lipschitz properties. Next, our main focus is on jump specifications with {\em affine jump sizes}, namely,
\begin{equation}\label{C:eq:1} 
\nu(x,A) = \lambda(x) \int \mathds 1_{A\setminus\{0\}}(\gamma(x,y)) \mu(\de y)
\end{equation}
where $\lambda:E\to\R_+$ is a non-negative measurable function, $\mu$ a L\'evy measure and $\gamma=(\gamma_1,\ldots,\gamma_d)$ is of the affine form
\begin{equation}\label{C:eq:2} 
\gamma_i(x,y) = y^0_i + y^1_i x_1 + \cdots + y^d_i x_d;
\end{equation}
see Definition~\ref{D:AJS}. This is the most general specification in the class of jump kernels with 
polynomial dependence on the current state; see Theorem~\ref{T:pol is affine}. 
Under the structural hypothesis of affine jump sizes, we classify all polynomial jump-diffusions on the 
unit interval (i.e.~the unit simplex in $\R^2$); see Theorem~\ref{thm1}. This classification is subsequently 
extended -- under an additional assumption -- to higher dimensions; 
see Theorem~\ref{th:mainsimplex}. Referring to the unit interval for notational convenience, 
we can distinguish four types of jump-diffusions, in addition to the pure diffusion case without jumps: 
\begin{enumerate}
\item[] {\bf Type~1:}  $\lambda$ is constant and the support of $\nu(x,\fdot)$ is contained in $[-x, 1-x]$;
\item[] {\bf Type~2:}  $ \lambda$ is (essentially) a linear-rational function with a pole of order one at the boundary, and the process can only jump in the direction of the pole;
\item[] {\bf Type~3:}  $\lambda$ is (essentially) a quadratic-rational function with a pole of order two in the interior of the state space. There is no jump activity at the pole, but an additional contribution to the diffusion coefficient.
\item[] {\bf Type~4:} $\lambda$ is a quadratic-rational function whose denominator has only complex zeros, and $\mu$ in~\eqref{C:eq:1} is of infinite variation.
\end{enumerate}
This classification already gives an indication of the diversity of possible behavior, 
an impression which is strengthened in Section~\ref{S:examples}, 
where we provide a number of examples both with and without affine jump sizes. 
On the one hand, these examples clearly show that without any structural assumptions like 
\eqref{C:eq:1}--\eqref{C:eq:2}, a full characterization of \emph{all} polynomial jump-diffusions on the simplex, 
or even the unit interval, is out of reach. On the other hand, the examples illustrate the richness 
and flexibility of the polynomial class.

The remainder of the paper is organised as follows. 
Section \ref{sec1.1} summarizes some notation used throughout the article. Section \ref{sec2} is 
concerned with polynomial operators on general compact state spaces and their associated martingale problems. Section \ref{sec3} introduces affine jump sizes. In Section \ref{sec4} we classify all polynomial jump-diffusions on the unit interval with affine jump sizes. It is followed by Section~\ref{S:examples} which deals with examples. Section \ref{sec6} treats the simplex in arbitrary dimension. Finally, Section~\ref{S:appl} discusses applications in stochastic portfolio theory and default risk modeling. Most proofs are gathered in appendices.

\subsection{Notation} \label{sec1.1}

We denote by $\N$ the natural numbers, $\N_0:=\N\cup\{0\}$ the nonnegative integers, and $\R_+$ the nonnegative reals.
The symbols $\R^{d\times d}$, $\S^d$, and $\S^d_+$ denote the $d \times d$ real, real symmetric, 
and real symmetric positive semi-definite matrices, respectively. 
For any subset $E\subseteq\R^d$, we let as usual $C(E)$ denote the space of continuous functions on $E$.  For any sufficiently differentiable function $f$ we write $\nabla f$ for the gradient of $f$ and 
$\nabla^2 f$ for the Hessian of $f$. Next, $e_i$ stands for the $i$th canonical unit vector, $|v|$ denotes the Euclidean norm of the vector $v \in \mathbb{R}^d$, $\delta_{ij}$ is the Kronecker delta, $\delta_x$ is the Dirac mass at $x$, and $\mathbf1$ is the vector whose entries are all equal to 1. 
We denote by $\Pol(\R^d)$ the vector space of all polynomials on $\R^d$ and $\Pol_n(\R^d)$ 
the subspace consisting of polynomials of degree at most~$n$. A \emph{polynomial on $E$} 
is the restriction $p=q|_E$ to $E$ of a polynomial $q\in\Pol(\R^d)$. Its degree is given by
$$\deg p = \min\{\deg q : p = q|_E, q \in \Pol(\R^d)\}.$$
We then let $\Pol(E)$ denote the vector space of polynomials on $E$, and write $\Pol_n(E)$ for those elements whose degree is at most $n$. We frequently use multi-index notation so that, for instance, $x^{\bm k}=x_1^{k_1}\cdots x_d^{k_d}$ for $\bm k=(k_1,\ldots,k_d)\in\N^d_0$.

\section{Polynomial  operators on compact state spaces}\label{sec2}

Let $E\subset\R^d$ be a compact subset of $\R^d$ that will play the role of the state space for a Markov process. 
Later we will specialize to the case where $E$ is the unit interval or the unit simplex. In this paper we are concerned with operators of the following type, along with solutions to the corresponding martingale problems.

\begin{definition}
A linear operator $\Gcal:\Pol(E)\to C(E)$ is called \emph{polynomial} if
\[
\Gcal\big(\Pol_n(E)\big)\subseteq\Pol_n(E) \quad \text{for all $n\in\N_0$.}
\]
\end{definition}

Given a linear operator $\Gcal:\Pol(E)\to C(E)$ and a probability distribution $\rho$ on $E$, 
a \emph{solution to the martingale problem} for $(\Gcal,\rho)$ is a c\`adl\`ag process $X$ with values in $E$ defined on some probability space $(\Omega,\Fcal,\P)$ such that $\P(X_0\in\fdot)=\rho$ and the process $(N_t^f)_{t\geq0}$ given by
\begin{equation}\label{eq:Nf}
N_t^f:=f(X_t)-\int_0^t\Gcal f(X_s)\de s
\end{equation}
is a martingale with respect to the filtration $\Fcal^X_t=\sigma(X_s:s\le t)$ for every $f\in\Pol(E)$. 
We say that {\em the martingale problem for $\Gcal$ is well-posed} if there exists a unique 
(in the sense of probability law) solution to the martingale problem for $(\Gcal,\rho)$ for any initial distribution $\rho$ on $E$. 
If $\Gcal$ is polynomial, then $X$ is called a \emph{polynomial jump-diffusion;} this terminology is justified by Theorem~\ref{thm3} and the subsequent discussion.

\subsection{The positive maximum principle}

%%PMP
\begin{definition}
A linear operator $\Gcal:\Pol(E)\to C(E)$ satisfies the \emph{positive maximum principle} if $\Gcal f(x_0)\leq0$ holds for any $f\in\Pol(E)$ and $x_0\in E$ with $\sup_{x\in E}f(x)=f(x_0)\geq0 $.
\end{definition}

Roughly speaking, the positive maximum principle is equivalent to the existence of solutions to the martingale problem. A typical result in this direction is Theorem~4.5.4 in \cite{EK:05}. For polynomial operators on compact state spaces more is true: we also get uniqueness.

\begin{theorem}\label{lem2}
Let $\Gcal:\Pol(E)\to C(E)$ be a polynomial operator. 
The martingale problem for $\Gcal$ is well-posed if and only if $\Gcal 1=0$ and $\Gcal$ satisfies the positive maximum principle.
\end{theorem}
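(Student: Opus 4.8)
The plan is to prove necessity of the two conditions directly from the definition of a solution, and to obtain sufficiency by splitting it into an existence part (classical theory) and a uniqueness part (where the polynomial structure is essential).

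For necessity, I would start from a well-posed martingale problem, fix $x_0\in E$, and let $X$ be the solution with $X_0=x_0$. Since $E$ is compact, every $p\in\Pol(E)$ and every $\Gcal p\in C(E)$ is bounded, so taking expectations in \eqref{eq:Nf} and applying Fubini gives $\E[p(X_t)]=p(x_0)+\int_0^t\E[\Gcal p(X_s)]\,\de s$ for all $t\ge0$. Choosing $p=1$ forces $\int_0^t\E[\Gcal 1(X_s)]\,\de s=0$; dividing by $t$, letting $t\downarrow0$, and using right-continuity of $X$ with bounded convergence yields $\Gcal 1(x_0)=0$, hence $\Gcal 1\equiv0$. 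For the positive maximum principle, I would take $f\in\Pol(E)$ and $x_0\in E$ with $\sup_{x\in E}f(x)=f(x_0)\ge0$, note that $f(X_t)\le f(x_0)$ a.s., so $\int_0^t\E[\Gcal f(X_s)]\,\de s=\E[f(X_t)]-f(x_0)\le0$, and conclude $\Gcal f(x_0)\le0$ by the same $t\downarrow0$ argument.

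For sufficiency, assume $\Gcal 1=0$ and the positive maximum principle. Existence of a solution for every initial law $\rho$ I would get from the classical existence theorem for martingale problems on compact state spaces, \citet[Theorem~4.5.4]{EK:05}: indeed $\Pol(E)\subseteq C(E)$ since $E$ is compact, $\Pol(E)$ is dense in $C(E)$ by Stone--Weierstrass (it separates points of $E\subseteq\R^d$ and contains the constants), $\Gcal$ satisfies the positive maximum principle, and $\Gcal 1=0$ guarantees an honest $E$-valued process. Uniqueness is the part where being polynomial is used: for each $n$ the space $\Pol_n(E)$ is finite-dimensional --- a quotient of $\Pol_n(\R^d)$ --- and $\Gcal$ maps it into itself. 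Fixing a basis $p_1,\dots,p_N$ of $\Pol_n(E)$ and writing $\Gcal p_i=\sum_j G_{ij}p_j$, the computation above shows that any solution $X$ satisfies the linear integral equation
\[
u(t)=u(0)+\int_0^t G\,u(s)\,\de s,\qquad u(t):=\big(\E[p_i(X_t)]\big)_{i=1}^N,
\]
so $u(t)=e^{tG}u(0)$. Since $u(0)$ depends only on $\rho$ and $n$ is arbitrary, $\E[p(X_t)]$ is determined by $\rho$ for every $p\in\Pol(E)$ and every $t$; compactness of $E$ makes $\Pol(E)$ measure-determining, so the law of $X_t$ is determined by $\rho$. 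Finally, any two solutions with the same initial law and the same one-dimensional marginals share all finite-dimensional distributions --- hence the same law on path space --- by \citet[Theorem~4.4.2]{EK:05}; together with existence, this gives well-posedness.

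The step I expect to be the main obstacle is the uniqueness half, and within it two points deserve care: reducing uniqueness of the full law of the c\`adl\`ag process to uniqueness of its one-dimensional marginals, which relies on the regular-conditional-probability argument behind \citet[Theorem~4.4.2]{EK:05}, and deriving the latter from the finite-dimensional linear ODE for moments, which is exactly where the hypothesis that $\Gcal$ is polynomial enters. Compactness of $E$ is used repeatedly --- for the boundedness and Fubini estimates, and, through Stone--Weierstrass, to pass from moments to laws.
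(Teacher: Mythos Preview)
Your proposal is correct and follows essentially the same route as the paper: existence via \citet[Theorem~4.5.4]{EK:05}, uniqueness via the finite-dimensional linear ODE for moments combined with compactness, and necessity by a short limiting argument. The only notable difference is that the paper derives the \emph{conditional} moment formula $\E[p(X_T)\mid\Fcal_t^X]=H(X_t)^\top e^{(T-t)G}\vec p$ (equation~\eqref{eq:momentformula}), which is of independent interest, whereas you work with unconditional moments $\E[p(X_t)]$ and then invoke \citet[Theorem~4.4.2]{EK:05} to bootstrap from one-dimensional marginals to the full law; the paper instead cites Lemma~4.1 and Theorem~4.2 of \cite{Filipovic/Larsson:2016} for the analogous step.
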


\begin{proof}
The existence of a solution to the martingale problem for $(\Gcal,\rho)$ 
for any initial distribution $\rho$ on $E$, is guaranteed by Theorem~4.5.4 and Remark~4.5.5 in \cite{EK:05}. 
To prove uniqueness in law, by compactness of $E$ it is enough to prove that the marginal mixed moments of any 
solution $X$ to the martingale problem for $(\Gcal,\rho)$ are uniquely determined by $\Gcal$ and $\rho$; see Lemma 4.1 and Theorem 4.2 in \cite{Filipovic/Larsson:2016}.
To this end, fix any $n\in\N$, let $h_1,\ldots,h_N$ be a basis of $\Pol_n(E)$, and set $H=(h_1,\ldots,h_N)^\top$. 
The operator $\Gcal$ admits a unique matrix representation $G\in\R^{N\times N}$ with respect to this basis, so that
\[
\Gcal p(x)=H(x)^\top G \vec{p},
\]
where $p\in\Pol_n(E)$ has coordinate representation $\vec{p}\in\R^N$, that is, $p(x)=H(x)\vec{p}$; cf.~Section~3 in \cite{Filipovic/Larsson:2016} 
and the proof of Theorem 2.7 in \cite{Cuchiero/etal:2012}. Following the proof of Theorem 3.1 in \cite{Filipovic/Larsson:2016} 
we use the definition of a solution to the martingale problem, linearity of expectation and integration, and the fact that polynomials on the compact set $E$ are bounded, to obtain
\begin{align*}
\vec{p}^{\top} \E[ H(X_T)| \Fcal^X_t]& = \E[p(X_T)| \Fcal^X_t]= p(X_t)+\E\left[\int_t^T \Gcal p(X_s) ds| \Fcal^X_t\right]\\
&=\vec{p}^{\top} H(X_t) +\vec{p}^{\top}G^{\top}\int_t^T \E[H(X_s)| \Fcal^X_t] ds
\end{align*}
for any $t \leq T$ and any $\vec{p}\in\R^N$. For each fixed $t$ this yields a linear integral equation for $\E[ H(X_T)| \Fcal^X_t]$, whose unique solution is $\E[ H(X_T)| \Fcal^X_t]=e^{(T-t)G^{\top}}H(X_t)$. Consequently,
\begin{align}\label{eq:momentformula}
\E[p(X_T ) | \Fcal^X_t]=\vec{p}^{\top}\E[ H(X_T)| \Fcal^X_t]=H(X_t)^\top e^{(T-t)G}\vec{p},
\end{align}
which in particular shows that all marginal mixed moments are uniquely determined by $\Gcal$ and $\rho$, as required.

For the converse implication, observe that since every solution to the martingale problem 
for $(\Gcal,\rho)$ is conservative, the condition $\Gcal1=0$ follows directly by the martingale property 
of \eqref{eq:Nf} with $f=1$. The necessity of the positive maximum principle is standard; see for instance the proof of Lemma 2.3 in \cite{Filipovic/Larsson:2016}.
\end{proof}

\begin{remark}
 Observe that a solution to the martingale problem is conservative by definition  since it is supposed to take values in $E$. 
 This is reflected by the condition $\Gcal1=0$ of Theorem~\ref{lem2} and in the definition of L\'evy type 
 operator in the next section. Let us remark that the condition in Theorem~\ref{lem2}, namely that the positive maximum principle 
and $\Gcal 1=0$ are satisfied, is equivalent to the \emph{maximum principle}, that is, 
 $\Gcal f(x_0)\leq0$ holds for any $f\in\Pol(E)$ and any $x_0\in E$ with $\sup_{x\in E}f(x)=f(x_0)$.
\end{remark}

\begin{remark}
While existence of a solution to the martingale problem is equivalent to the maximum principle in very general settings,
it is remarkable that in the case of polynomial operators on compact state spaces uniqueness also follows. Indeed, without the assumption that $\Gcal$ is polynomial, it is well-known that the maximum principle is not enough to guarantee uniqueness. 
For example, with $E=[0,1]$ and $\Gcal f(x) = \sqrt{x}(1-x)f'(x)$, 
the functions $X_t = (e^t-1)^2/(e^t+1)^2$ and $X_t\equiv 0$ are two different solutions to the martingale problem 
for $(\Gcal,\delta_{0})$. In the polynomial case, well-posedness is deduced from uniqueness of moments, which is a consequence of~\eqref{eq:momentformula}. Let us emphasize that \eqref{eq:momentformula} gives more than mere uniqueness: it gives an explicit formula for computing the moments via a matrix exponential. This tractability is crucial in applications, and was used as a defining property of this class of processes in \cite{Cuchiero/etal:2012}.
\end{remark}

\subsection{L\'evy type representation}

 \begin{definition}
An operator $\Gcal:\Pol(E)\to C(E)$ is said to be of \emph{L\'evy type} if it can be represented as
\begin{equation} \label{eq:G LK}
\begin{aligned}
\Gcal f(x) &= \frac{1}{2}\tr\left( a(x) \nabla^2 f(x) \right) + b(x)^\top \nabla f(x) \\
&\qquad + \int \left( f(x + \xi) - f(x) - \xi^\top \nabla f(x) \right) \nu(x,\de\xi),
\end{aligned}
\end{equation}
where the right-hand side can be computed using an arbitrary representative of $f$, and the triplet $(a,b,\nu)$ consists of bounded measurable functions $a:E\to\S^d_+$ and $b:E\to\R^d$, and a kernel $\nu(x,\de\xi)$ from $E$ into $\R^d$ satisfying
\begin{equation}\label{thenu}
\sup_{x\in E} \int |\xi|^2 \nu(x,\de\xi) < \infty, \quad \nu(x,\{0\})=0, \quad \nu(x, (E-x)^c )=0 \quad\text{for all $x\in E$.}
\end{equation}
\end{definition}

Polynomial operators satisfying the positive maximum principle are always L\'evy type operators, 
as is shown in Theorem~\ref{thm3} below. This parallels known results regarding operators acting on smooth and compactly supported functions, 
see \cite{C:65} or \citet[Theorem 2.21]{BSW:13} for Feller generators, and also \cite{Hoh:1998}. A crucial ingredient in the proof of Theorem~\ref{thm3} is the classical Riesz-Haviland theorem, which we now state. A proof can be found in \cite{Haviland:1935} and (\citeyear{Havliand:1936}), or e.g. \cite{Marshall:2008}.

\begin{lemma}[Riesz-Haviland]\label{lem5}
Let $K\subset\R^d$ be compact, and consider a linear functional $\Wcal:\Pol(K)\to\R$. Then the following conditions are equivalent.
\begin{itemize}
\item[(i)] $\Wcal(f)=\int f(\xi)\mu(\de\xi)$ for all $f\in\Pol(K)$ and a Borel measure $\mu$ concentrated on $K$.
\item[(ii)] $\Wcal(f)\geq0$ for all $f\in\Pol(K)$ such that $f\geq0$ on $E$.
\end{itemize}
\end{lemma}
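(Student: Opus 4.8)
The plan is to prove the substantive implication (ii) $\Rightarrow$ (i); the converse is immediate, since integrating a function that is nonnegative on $K$ against a positive Borel measure concentrated on $K$ produces a nonnegative number. The strategy is the classical one for moment problems on compact sets: extend $\Wcal$ to a positive linear functional on all of $C(K)$ and apply the Riesz representation theorem.

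First I would show that $\Wcal$ is bounded for the supremum norm. For any $f\in\Pol(K)$, the functions $\|f\|_\infty\pm f$ are again polynomials on $K$ and are nonnegative there, so (ii) gives $\Wcal(\|f\|_\infty\pm f)\ge0$, hence $|\Wcal(f)|\le\Wcal(1)\,\|f\|_\infty$, with $\Wcal(1)\ge0$. Since $K$ is compact and polynomials separate points and contain the constants, the Stone-Weierstrass theorem shows that $\Pol(K)$ is dense in $C(K)$; the bounded, hence uniformly continuous, functional $\Wcal$ therefore extends uniquely by continuity to a bounded linear functional $\widetilde\Wcal$ on $C(K)$.

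The key step is to verify that $\widetilde\Wcal$ is positive. Given $g\in C(K)$ with $g\ge0$ and $\e>0$, the function $\sqrt{g+\e}$ is continuous and strictly positive on the compact set $K$, so by the Weierstrass approximation theorem there is a polynomial $q$ with $\|q-\sqrt{g+\e}\|_\infty$ as small as desired; then $q^2$ is a polynomial, nonnegative on $K$, and uniformly close to $g+\e$. Since $\Wcal(q^2)\ge0$ by (ii) and $\widetilde\Wcal$ is continuous, letting the approximation error go to zero gives $\widetilde\Wcal(g+\e)\ge0$, and then $\e\downarrow0$ yields $\widetilde\Wcal(g)\ge0$. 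Finally, the Riesz representation theorem applied to the positive linear functional $\widetilde\Wcal$ on $C(K)$, with $K$ compact, produces a finite positive Borel measure $\mu$ concentrated on $K$ such that $\widetilde\Wcal(g)=\int g(\xi)\,\mu(\de\xi)$ for all $g\in C(K)$; restricting to $f\in\Pol(K)$ gives $\Wcal(f)=\int f(\xi)\,\mu(\de\xi)$.

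I expect the only genuine subtlety to be the positivity of the extension: one cannot approximate a nonnegative continuous function by nonnegative polynomials directly, since Weierstrass approximation gives no control on the sign, and the device of approximating $\sqrt{g+\e}$ and squaring is exactly what repairs this. The remaining ingredients — the norm bound coming from (ii) applied to $\|f\|_\infty\pm f$, density via Stone-Weierstrass, continuous extension, and the Riesz representation theorem — are all standard.
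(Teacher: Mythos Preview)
The paper does not actually prove this lemma; it only states the Riesz--Haviland theorem and cites \cite{Haviland:1935}, \cite{Havliand:1936}, and \cite{Marshall:2008} for a proof. Your argument is therefore not being compared against anything in the paper itself.

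That said, your proof is correct and is the standard route for the compact case: bound $\Wcal$ via $\|f\|_\infty\pm f\ge0$, extend by Stone--Weierstrass density, verify positivity of the extension, and invoke the Riesz representation theorem. One small remark: the positivity step can be done more simply than via the $\sqrt{g+\e}$-and-square trick. If $g\ge0$ and $p_n\to g$ uniformly in $C(K)$, then for any $\e>0$ and $n$ large, $p_n+\e\ge0$ on $K$, so $\Wcal(p_n)\ge -\e\,\Wcal(1)$; passing to the limit gives $\widetilde\Wcal(g)\ge-\e\,\Wcal(1)$ for all $\e>0$, hence $\widetilde\Wcal(g)\ge0$. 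Your squaring device is perfectly valid, just slightly more work than needed here.
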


We now state Theorem~\ref{thm3} regarding the L\'evy type representation of operators satisfying the positive maximum principle. The proof is given in Section \ref{app11}.

\begin{theorem}\label{thm3}
Consider a linear operator $\Gcal:\Pol(E)\to C(E)$. If $\Gcal 1=0$ and $\Gcal$ satisfies the positive maximum principle, then $\Gcal$ is a L\'evy type operator. \end{theorem}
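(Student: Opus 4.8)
The plan is to construct the triplet $(a,b,\nu)$ pointwise in $x$ and then verify the required boundedness and measurability. Fix $x_0\in E$. The key observation is that the linear functional $f\mapsto \Gcal f(x_0)$, restricted to suitable subspaces of $\Pol(E)$, interacts well with the positive maximum principle: if $f\in\Pol(E)$ vanishes to second order at $x_0$ (i.e.\ $f(x_0)=0$, $\nabla f(x_0)=0$) and $f\ge 0$ on $E$, then $x_0$ is a global minimum of $-f$ with value $0$, so by the maximum principle (in the form stated in the first remark, equivalent to the positive maximum principle together with $\Gcal 1=0$) we get $\Gcal(-f)(x_0)\le 0$, i.e.\ $\Gcal f(x_0)\ge 0$. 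Thus the functional $f\mapsto \Gcal f(x_0)$ is nonnegative on the cone of polynomials that are nonnegative on $E$ and vanish to second order at $x_0$.

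Next I would set up the candidate jump measure. Consider the linear functional $\Wcal_{x_0}$ defined on polynomials $g\in\Pol(E-x_0)$ by $\Wcal_{x_0}(g) = \Gcal f(x_0)$ where $f(x)=|x-x_0|^2\, g(x-x_0)$ (or more precisely, $f(\cdot) = q_{x_0}(\cdot)\,\widetilde g(\cdot)$ where $q_{x_0}(x)=|x-x_0|^2$ and $\widetilde g(x)=g(x-x_0)$). Since $q_{x_0}$ vanishes to second order at $x_0$, any such $f$ does too, so if $g\ge 0$ on $E-x_0$ then $f\ge 0$ on $E$ and vanishes to second order at $x_0$, whence $\Wcal_{x_0}(g)=\Gcal f(x_0)\ge 0$. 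One must check $\Wcal_{x_0}$ is well-defined, i.e.\ independent of the choice of polynomial representative; this follows because two representatives of $g$ differ by a polynomial vanishing on $E-x_0$, hence the corresponding $f$'s differ by a polynomial vanishing on $E$, on which $\Gcal$ (being defined on $\Pol(E)$) gives $0$. By the Riesz--Haviland theorem (Lemma~\ref{lem5}) applied with $K = E - x_0$, there is a Borel measure $\widetilde\nu_{x_0}$ concentrated on $E-x_0$ with $\Wcal_{x_0}(g) = \int g\, d\widetilde\nu_{x_0}$ for all $g\in\Pol(E-x_0)$. Define $\nu(x_0,d\xi)$ to be $|\xi|^{-2}\mathds 1_{\{\xi\ne 0\}}\widetilde\nu_{x_0}(d\xi)$; testing against $g\equiv 1$ shows $\sup_{x_0}\int |\xi|^2\nu(x_0,d\xi) = \sup_{x_0}\Gcal q_{x_0}(x_0) < \infty$ because $x\mapsto q_{x}(x)$ extends to a fixed quadratic polynomial and $\Gcal$ maps $\Pol_2(E)$ into $\Pol_2(E)$, which is bounded on compact $E$; and $\nu(x_0,\{0\})=0$, $\nu(x_0,(E-x_0)^c)=0$ hold by construction.

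Then I would recover $a$ and $b$. For the drift, apply $\Gcal$ to the coordinate monomials: set $b_i(x) = \Gcal m_i(x) - \int \xi_i\,\nu(x,d\xi)$ where $m_i(x)=x_i$; the integral is finite since $\int|\xi|\,\nu(x,d\xi) \le (\operatorname{diam}E)\cdot\sup\int|\xi|^2\nu$. For the diffusion, set $a_{ij}(x) = \Gcal(m_i m_j)(x) - m_i(x)\Gcal m_j(x) - m_j(x)\Gcal m_i(x) - \int \xi_i\xi_j\,\nu(x,d\xi)$, i.e.\ the "carr\'e du champ" correction. One then checks, by direct computation on monomials and linearity, that with these choices the formula \eqref{eq:G LK} reproduces $\Gcal f(x)$ for every $f\in\Pol(E)$: both sides are linear in $f$, agree on $1$, on linear $f$, and on quadratic $f$ by construction, and for higher degree one uses that \eqref{eq:G LK} and $\Gcal$ both act on $\Pol(E)$ and the discrepancy $\Gcal f - (\text{RHS})$, call it $\Dcal f$, is a linear operator that annihilates $\Pol_2(E)$; a localization/Taylor argument at each $x_0$ — testing $\Dcal$ against $f(x)=q_{x_0}(x)g(x-x_0)$ and using that $\Dcal(q_{x_0}\widetilde g)(x_0)=\Wcal_{x_0}(g)-\int g\,d\widetilde\nu_{x_0}=0$ — forces $\Dcal f(x_0)=0$ for all $f$ and all $x_0$. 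Finally, $a(x)\in\S^d_+$ follows by testing the positive maximum principle against $f(x) = -(v^\top(x-x_0))^2$ for $v\in\R^d$, which is nonpositive with maximum $0$ at $x_0$, giving $v^\top a(x_0) v\ge 0$ after accounting for the jump part. Measurability of $a,b,\nu(\cdot,A)$ follows because each is obtained from the polynomial functions $x\mapsto \Gcal p(x)$ (which are continuous) via elementary operations and, for $\nu$, via the pointwise-defined Riesz--Haviland measures whose moments $\int \xi^{\bm k}\nu(x,d\xi)$ are continuous in $x$; a monotone-class argument upgrades this to measurability of $x\mapsto\nu(x,A)$ for all Borel $A$.

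The main obstacle I expect is the uniqueness/consistency argument showing that the pointwise-constructed measures $\widetilde\nu_{x_0}$ together with $a,b$ actually reassemble into the operator $\Gcal$ on all of $\Pol(E)$ — in particular handling polynomials that do not factor through $q_{x_0}$, and ensuring the "vanishing to second order" trick captures the full action of $\Gcal$ rather than just its restriction to an ideal. The subtlety is that $E$ may be lower-dimensional or non-smooth (as the introduction stresses for the simplex), so the ideal of polynomials vanishing on $E$ is nontrivial and one must be careful that $q_{x_0}\widetilde g$ ranges over enough of $\Pol(E)$ modulo $\Pol_2(E)$; this is exactly where the Riesz--Haviland theorem on the compact set $K=E-x_0$ is doing the heavy lifting, since it produces a genuine measure from the positivity of a functional defined only on polynomial test functions.
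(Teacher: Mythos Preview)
Your construction of $\nu(x_0,\fdot)$ via the single Riesz--Haviland functional $\Wcal_{x_0}(g)=\Gcal(|\fdot-x_0|^2\,\widetilde g)(x_0)$ is close in spirit to the paper, but the verification step has a genuine gap in dimension $d\ge2$---and the worry you voice in your last paragraph is in fact a real obstruction, not merely a subtlety. You claim that $\Dcal=0$ on $\Pol_2(E)$ together with $\Dcal(q_{x_0}\widetilde g)(x_0)=0$ forces $\Dcal f(x_0)=0$ for all $f$; however, $\Pol_2(\R^d)+|\fdot-x_0|^2\Pol(\R^d)$ is a proper subspace of $\Pol(\R^d)$ once $d\ge2$: for instance, $(x_1-x_{0,1})^3$ does not lie in it. The paper circumvents this by introducing not one functional but the family $\Wcal_{ij}(p)=\Gcal\big(p(\fdot-x)\,(\fdot-x)_i(\fdot-x)_j\big)(x)$, obtained by polarization from the positive functionals $\Wcal_u(p)=\Gcal\big(p(\fdot-x)(u^\top(\fdot-x))^2\big)(x)$. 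Riesz--Haviland applied to each $\Wcal_u$ yields measures $\nu_u$; one then sets $a_{ij}(x)=\nu_{ij}(x,\{0\})$ and $\nu(x,\de\xi)=|\xi|^{-2}\mathds1_{\{\xi\ne0\}}\sum_k\nu_{e_k}(x,\de\xi)$. The universal decomposition $q(\xi)=c_0+\sum_ic_i\xi_i+\sum_{i,j}\xi_i\xi_jq_{ij}(\xi)$ then recovers the L\'evy formula for every polynomial term by term. This also makes $a(x)\in\S^d_+$ immediate via $u^\top a(x)u=\nu_u(x,\{0\})\ge0$; by contrast, your test against $f=-(v^\top(\fdot-x_0))^2$ only gives $v^\top a(x_0)v+\int(v^\top\xi)^2\nu(x_0,\de\xi)\ge0$, which does not separate $a$ from the jump contribution.

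Two smaller issues. First, your formula $b_i(x)=\Gcal m_i(x)-\int\xi_i\,\nu(x,\de\xi)$ is inconsistent with the representation~\eqref{eq:G LK}, which already subtracts $\xi^\top\nabla f(x)$ inside the jump integral; the correct definition is simply $b_i(x)=\Gcal\big((\fdot-x)_i\big)(x)=\Gcal m_i(x)$. Second, the inequality $\int|\xi|\,\nu(x,\de\xi)\le(\operatorname{diam}E)\int|\xi|^2\,\nu(x,\de\xi)$ goes the wrong way: small jumps can have infinite first moment even when the second moment is finite, so $\int\xi_i\,\nu(x,\de\xi)$ need not exist.
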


Suppose $\Gcal:\Pol(E)\to C(E)$ is a linear operator with $\Gcal 1=0$ that satisfies the positive maximum principle, and let $X$ be a solution to the associated martingale problem. Then $X$ is a semimartingale, as can be seen by taking $f(x)=x_i$ in~\eqref{eq:Nf}. We claim that its diffusion, drift, and jump characteristics (with the identity map as truncation function) are given by
\[
\int_0^t a(X_s)\de s, \qquad \int_0^t b(X_s)\de s, \qquad \nu(X_{t-},\de\xi)\de t,
\]
where $(a,b,\nu)$ is the triplet of the L\'evy type representation~\eqref{eq:G LK}. 
To see this, first note that $\Gcal$ can be extended to $C^2$ functions on $E$ using~\eqref{eq:G LK}. 
Then, an approximation argument shows that $N^f$ in~\eqref{eq:Nf} remains a martingale 
for such functions~$f$. The claimed form of the 
characteristics of $X$ now follows from Theorem~II.2.42 in~\cite{Jacod/Shiryaev:2003}; 
see also Proposition 2.12 in \cite{Cuchiero/etal:2012}. This justifies referring to $X$ as a polynomial jump-diffusion. 
Since  the martingale problem is well-posed by Theorem~\ref{lem2}, such a 
polynomial jump-diffusion is a Markov process, and hence a polynomial process in the 
sense of \cite{Cuchiero/etal:2012}.

The following lemma provides necessary and sufficient conditions on the triplet $(a,b,\nu)$ in order that $\Gcal$ be polynomial.

% Lemma 2be polynomial
\begin{lemma}\label{lem1}
Let $\Gcal:\Pol(E)\to C(E)$ be a L\'evy type operator with triplet $(a,b,\nu)$. Then $\Gcal$ is polynomial  if and only if
\[
b_i\in\Pol_1(E), \qquad
a_{ij}+\int\xi_i\xi_j\nu(\fdot,\de\xi)\in\Pol_2(E), \qquad
\int\xi^{\bk}\nu(\fdot,\de\xi)\in\Pol_{|\bk|}(E)
\]
for all $i,j\in\{1,\ldots,d\}$ and $|\bk|\geq3$.
\end{lemma}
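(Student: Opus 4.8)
The plan is to test the polynomial property of $\Gcal$ against the monomials $x^{\bk}$ and read off the conditions term by term. Fix a multi-index $\bk$ and apply the L\'evy type representation \eqref{eq:G LK} to $f(x)=x^{\bk}$, using a fixed polynomial representative. The first two terms, $\frac12\tr(a(x)\nabla^2 f(x))$ and $b(x)^\top\nabla f(x)$, are explicit: $\nabla^2 x^{\bk}$ has entries that are polynomials of degree $|\bk|-2$ and $\nabla x^{\bk}$ has entries of degree $|\bk|-1$. The integral term needs to be expanded via the multinomial theorem: $f(x+\xi)-f(x)-\xi^\top\nabla f(x)=\sum_{|\bm j|\ge 2}\binom{\bk}{\bm j}x^{\bk-\bm j}\xi^{\bm j}$, where the sum is over $\bm j\le\bk$ with $|\bm j|\ge 2$. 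Integrating against $\nu(x,\de\xi)$ gives $\sum_{|\bm j|\ge 2}\binom{\bk}{\bm j}x^{\bk-\bm j}\int\xi^{\bm j}\nu(x,\de\xi)$; this is well-defined and finite by the first moment condition in \eqref{thenu} together with boundedness of polynomials on the compact set $E$ (for $|\bm j|\ge 3$ one uses $|\xi^{\bm j}|\le |\xi|^2\cdot(\text{bounded})$ on $E-x\subseteq E-E$, which is bounded).

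Next I would argue the equivalence by induction on $n=|\bk|$ (equivalently, on the degree). The base cases $n=0$ ($\Gcal 1 = 0$ automatically, nothing to prove) and $n=1$ (applying $\Gcal$ to $x_i$ kills the second-order and integral terms since $\nabla^2 x_i=0$ and the quadratic/higher $\xi$-moments don't appear, leaving $\Gcal x_i = b_i$, so $\Gcal$ maps $\Pol_1$ into $\Pol_1$ iff $b_i\in\Pol_1(E)$) are immediate. For the inductive step, suppose the stated conditions hold for all multi-indices of length $<n$ and that additionally they hold for length $n$; then for $|\bk|=n$ the expansion of $\Gcal x^{\bk}$ is a sum of: a polynomial of degree $\le n$ coming from $b(x)^\top\nabla f(x)$ (using $b_i\in\Pol_1$); a term $\frac12\sum_{ij}k_ik_j(\cdots)x^{\bk-e_i-e_j}a_{ij}(x)$ of degree $\le n$ once combined with the $|\bm j|=2$ part $\frac12\sum_{ij}k_i(k_j-\delta_{ij})x^{\bk-e_i-e_j}\int\xi_i\xi_j\nu(x,\de\xi)$, since it is exactly $a_{ij}+\int\xi_i\xi_j\nu$ that must lie in $\Pol_2(E)$; and the $|\bm j|\ge 3$ part, which is a sum of $x^{\bk-\bm j}\int\xi^{\bm j}\nu(x,\de\xi)$ with $\int\xi^{\bm j}\nu\in\Pol_{|\bm j|}(E)$, giving degree $\le n$. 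Conversely, to extract each condition one isolates the appropriate monomial: taking $\bk=e_i$ isolates $b_i$; taking $\bk=e_i+e_j$ (or $2e_i$) and using that $b_i\in\Pol_1$ is already known isolates $a_{ij}+\int\xi_i\xi_j\nu$; and proceeding inductively in $|\bk|$, for $|\bk|\ge 3$ the top-degree obstruction in $\Gcal x^{\bk}$ is precisely $\int\xi^{\bk}\nu(\fdot,\de\xi)$ — all other terms in the expansion have the form (polynomial of controlled degree)$\times$(lower-order $\xi$-moment already constrained by the inductive hypothesis) — so $\Gcal x^{\bk}\in\Pol_n(E)$ forces $\int\xi^{\bk}\nu\in\Pol_{|\bk|}(E)$.

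The main obstacle I anticipate is the bookkeeping in the inductive step: one must be careful that when isolating $\int\xi^{\bk}\nu(\fdot,\de\xi)$ as the "new" unknown at degree $n=|\bk|$, every other contribution to $\Gcal x^{\bk}$ is genuinely already known to be a polynomial of degree $\le n$. This is where one needs that the conditions $\int\xi^{\bm j}\nu\in\Pol_{|\bm j|}(E)$ for all $2\le|\bm j|<|\bk|$ — more precisely the combined quantities $a_{ij}+\int\xi_i\xi_j\nu$ at level $2$ and the bare moments at levels $3,\dots,n-1$ — are in hand, and that $x^{\bk-\bm j}$ has degree $|\bk|-|\bm j|$ so the product has degree $\le|\bk|$. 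A secondary technical point is the justification that $\Gcal x^{\bk}\in C(E)$ and that the interchange of sum and integral in the multinomial expansion is legitimate; both follow from \eqref{thenu} and compactness of $E$, exactly as in the discussion preceding the lemma, so these can be dispatched quickly. Finally, one should note that the displayed conditions are manifestly independent of the choice of polynomial representative of $x^{\bk}$ on $E$, consistent with the remark in \eqref{eq:G LK} that the right-hand side may be computed with any representative.
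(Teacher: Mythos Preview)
Your proof is correct, but the paper takes a slicker route that avoids the induction and bookkeeping entirely. Rather than testing $\Gcal$ on the monomials $x^{\bk}$ and tracking how all the terms interact, the paper applies $\Gcal$ to the \emph{shifted} monomials $(\,\cdot-x)^{\bk}$ and evaluates at $x$: a direct computation gives
\[
\Gcal\big(e_i^\top(\,\cdot-x)\big)(x)=b_i(x),\quad
\Gcal\big(e_i^\top(\,\cdot-x)\,e_j^\top(\,\cdot-x)\big)(x)=a_{ij}(x)+\int\xi_i\xi_j\,\nu(x,\de\xi),
\]
and $\Gcal\big((\,\cdot-x)^{\bk}\big)(x)=\int\xi^{\bk}\,\nu(x,\de\xi)$ for $|\bk|\ge3$, because the drift and diffusion contributions vanish at $z=x$ once the polynomial has a zero of order $\ge2$, respectively $\ge3$, there. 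Since $(z-x)^{\bk}=\sum_{\bm j\le\bk}\binom{\bk}{\bm j}(-x)^{\bk-\bm j}z^{\bm j}$ and $\Gcal$ polynomial forces $\Gcal(z^{\bm j})\in\Pol_{|\bm j|}(E)$, the map $x\mapsto\Gcal((\,\cdot-x)^{\bk})(x)$ is manifestly in $\Pol_{|\bk|}(E)$, and the necessity of the conditions drops out in one line. The converse is then just $\deg(pq)\le\deg p+\deg q$. Your approach buys nothing extra here, but it is the natural thing to try if one has not seen the shifting trick; the paper's approach trades your careful combination of the diffusion term with the $|\bm j|=2$ jump moments (which you handle correctly) for a choice of test functions that isolates each quantity cleanly from the start.
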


\begin{proof}
This result is well-known, see for instance \cite{Cuchiero/etal:2012}, and the proof is simple. Indeed, direct computation yields $0=\Gcal (1)(x)$, $b_i(x)=\Gcal( e_i^\top (\fdot - x))(x)$, $a_{ij}(x)+\int \xi_i\xi_j \nu(x,\de\xi)=\Gcal( e_i^\top (\fdot - x) e_j^\top (\fdot - x))(x)$, and $\int \xi^{\bm k} \nu(x,\de\xi)=\Gcal( (\fdot - x)^{\bm k} )(x)$ for $|\bm k|\ge 3$. Thus, if $\Gcal$ is polynomial, one can show that the triplet satisfies the stated conditions. The converse implication is immediate from the observation that $\deg(pq)\le\deg(p)+\deg(q)$ for any $p,q\in\Pol(E)$.
\end{proof}

\subsection{Conic combinations of polynomial operators}

Due to Theorem~\ref{lem2} and Theorem~\ref{thm3}, every member of the set
\[
\Kcal := \{ \Gcal: \Pol(E)\to C(E) \colon \text{$\Gcal$ is polynomial and its martingale problem is well-posed}\}
\]
is of L\'evy type~\eqref{eq:G LK}. The set $\Kcal$ also possesses the following stability properties, 
which are useful for constructing examples of polynomial jump-diffusions; 
we do this in Section~\ref{S:examples}. The proofs of the following two results are given in Section \ref{appa2}.

%%lemma conic combination
\begin{theorem}\label{lem3}
The set $\Kcal$ is a convex cone closed under pointwise convergence, in the sense that if $\Gcal_n\in\Kcal$ for $n\in\N$ and $\Gcal f(x):=\lim_{n\to\infty}\Gcal_n f(x)$ exists and is finite for all $f\in\Pol(E)$ and $x\in E$, then $\Gcal\in \Kcal$.
\end{theorem}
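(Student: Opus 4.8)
The plan is to verify the two defining properties of membership in $\Kcal$ separately for the cone structure and for closedness under pointwise limits, using Theorem~\ref{lem2} to reduce everything to checking that the operators are polynomial, satisfy the positive maximum principle, and annihilate the constant function $1$.

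First I would address the convex cone property. Let $\Gcal_1,\Gcal_2\in\Kcal$ and $\alpha,\beta\geq0$, and set $\Gcal=\alpha\Gcal_1+\beta\Gcal_2$. Linearity is clear, and since each $\Gcal_i$ maps $\Pol_n(E)$ into $\Pol_n(E)$, so does any linear combination; hence $\Gcal$ is polynomial. The condition $\Gcal 1 = \alpha\Gcal_1 1 + \beta\Gcal_2 1 = 0$ is immediate. For the positive maximum principle, suppose $f\in\Pol(E)$ and $x_0\in E$ satisfy $\sup_{x\in E}f(x)=f(x_0)\geq0$; then $\Gcal_i f(x_0)\leq 0$ for $i=1,2$ by assumption, so $\Gcal f(x_0)=\alpha\Gcal_1 f(x_0)+\beta\Gcal_2 f(x_0)\leq0$. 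By Theorem~\ref{lem2}, the martingale problem for $\Gcal$ is well-posed, so $\Gcal\in\Kcal$. (The zero operator lies in $\Kcal$, giving the cone its apex.)

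Next I would handle closedness under pointwise convergence. Suppose $\Gcal_n\in\Kcal$ and $\Gcal f(x):=\lim_{n}\Gcal_n f(x)$ exists and is finite for every $f\in\Pol(E)$, $x\in E$. Linearity of $\Gcal$ follows from linearity of each $\Gcal_n$ together with the fact that pointwise limits respect linear combinations. To see that $\Gcal$ maps into $C(E)$ and is polynomial, fix $n\in\N_0$ and a basis $h_1,\dots,h_N$ of $\Pol_n(E)$; each $\Gcal_m h_j$ lies in $\Pol_n(E)$, say with coordinate vector converging to some limit in $\R^N$ (the limit exists because evaluation at $N$ suitably chosen points of $E$ determines the coordinates and those evaluations converge). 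Hence $\Gcal h_j\in\Pol_n(E)$ for each $j$, and by linearity $\Gcal(\Pol_n(E))\subseteq\Pol_n(E)$; in particular $\Gcal f\in C(E)$. The identity $\Gcal 1 = \lim_n \Gcal_n 1 = 0$ is clear. Finally, the positive maximum principle passes to the limit: if $\sup_{x\in E}f(x)=f(x_0)\geq0$ then $\Gcal_n f(x_0)\leq0$ for all $n$, so $\Gcal f(x_0)=\lim_n\Gcal_n f(x_0)\leq0$. Theorem~\ref{lem2} again gives well-posedness, so $\Gcal\in\Kcal$.

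The only point requiring genuine care is the claim that the pointwise limit $\Gcal$ is again polynomial, i.e.\ that it does not merely map polynomials to continuous functions but preserves the degree filtration; the subtlety is to argue that a pointwise limit of elements of the finite-dimensional space $\Pol_n(E)$ stays in $\Pol_n(E)$. This follows because $\Pol_n(E)$ is finite-dimensional: choosing points $x_1,\dots,x_N\in E$ at which the evaluation functionals $p\mapsto(p(x_1),\dots,p(x_N))$ form an isomorphism $\Pol_n(E)\to\R^N$, convergence of $\Gcal_m f(x_k)$ for each $k$ forces the coordinate vectors of $\Gcal_m f$ to converge, and the limit vector corresponds to an element of $\Pol_n(E)$ which must coincide with $\Gcal f$ by evaluating at the $x_k$. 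Everything else is a routine transfer of the three hypotheses of Theorem~\ref{lem2} through linear operations and limits.
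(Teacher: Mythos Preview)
Your proof is correct and follows essentially the same approach as the paper: check that the polynomial property, the condition $\Gcal 1=0$, and the positive maximum principle are each preserved under conic combinations and pointwise limits, then invoke Theorem~\ref{lem2}. The paper is terser, simply asserting that each $\Pol_k(E)$ is closed under pointwise convergence, whereas you spell out the finite-dimensionality argument; one small sharpening is that in your last paragraph the conclusion $p=\Gcal f$ should be justified by noting that convergence of the coordinate vectors forces $\Gcal_m f(x)\to p(x)$ at \emph{every} $x\in E$, not merely at the interpolation nodes $x_1,\dots,x_N$.
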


If an operator $\Gcal$ is the limit of $\Gcal_n$ as in Theorem~\ref{lem3}, then its triplet $(a,b,\nu)$ can be expressed in terms of the triplets $(a^n,b^n,\nu^n)$ of the operators $\Gcal_n$.

\begin{lemma}\label{lem7}
Suppose that $\Gcal_n\in\Kcal$, and let $a^n$, $b^n$, and $\nu^n(x,\de\xi)$ be the coefficients of its L\'evy type representation, for all $n\in \N$.  Then $\Gcal f (x):=\lim_{n\to\infty}\Gcal_n f (x)$ exists and is finite for all $f\in\Pol(E)$ and $x\in E$ if and only if the coefficients
\[
b_i^n,\qquad a_{ij}^n+\int \xi_i\xi_j\nu^n(\fdot,\de\xi),\qquad \int \xi^\bk\nu^n(\fdot,\de\xi)
\] 
converge pointwise as $n\to\infty$ for all  $i,j\in\{1,\ldots,d\}$ and $|\bk|\geq3$. In this case the triplet $(a,b,\nu)$ of the L\'evy type representation of $\Gcal$ is given by
\[
b_i(x)=\lim_{n\to\infty}b_i^n(x),\quad a_{ij}(x)=\lim_{n\to\infty}\bigg(a_{ij}^n(x)+\int\xi_i\xi_j\nu^n(x,\de\xi)\bigg)-\int\xi_i\xi_j\nu(x,\de\xi),
\] 
for all $x\in E$ and $i,j\in\{1,\ldots,d\}$, where the kernel $\nu(x,\de\xi)$ is uniquely determined by
$$\int \xi^\bk\nu(x,\de\xi)=\lim_{n\to\infty}\int \xi^\bk\nu^n(x,\de\xi), \qquad |\bk|\geq3.$$
\end{lemma}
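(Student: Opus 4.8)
The plan is to exploit the concrete characterizations already available: Lemma~\ref{lem1} tells us that a L\'evy type operator $\Gcal$ is polynomial precisely when the three families of coefficients $b_i$, $a_{ij}+\int\xi_i\xi_j\nu(\fdot,\de\xi)$, and $\int\xi^\bk\nu(\fdot,\de\xi)$ for $|\bk|\ge 3$ lie in the appropriate $\Pol_n(E)$; and Theorem~\ref{lem3} already guarantees that the pointwise limit $\Gcal$, when it exists, lies in $\Kcal$, hence is itself a polynomial L\'evy type operator with some triplet $(a,b,\nu)$. So the whole statement reduces to identifying \emph{which} triplet, and showing the convergence criterion is equivalent to convergence of exactly those three combinations of coefficients.

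First I would record the elementary identities from the proof of Lemma~\ref{lem1}: for each $n$,
\[
b_i^n(x)=\Gcal_n\big(e_i^\top(\fdot-x)\big)(x),\qquad a_{ij}^n(x)+\int\xi_i\xi_j\nu^n(x,\de\xi)=\Gcal_n\big(e_i^\top(\fdot-x)\,e_j^\top(\fdot-x)\big)(x),
\]
and $\int\xi^\bk\nu^n(x,\de\xi)=\Gcal_n\big((\fdot-x)^\bk\big)(x)$ for $|\bk|\ge 3$, together with the analogous identities with $\Gcal$ and the limiting triplet in place of $\Gcal_n$. This immediately gives the ``only if'' direction: if $\Gcal_n f(x)\to\Gcal f(x)$ for every $f\in\Pol(E)$ and every $x$, then applying this to the specific polynomials $e_i^\top(\fdot-x)$, $e_i^\top(\fdot-x)e_j^\top(\fdot-x)$, and $(\fdot-x)^\bk$ (all of which are fixed polynomials once $x$ is fixed) yields the claimed convergence of $b_i^n(x)$, of $a_{ij}^n(x)+\int\xi_i\xi_j\nu^n(x,\de\xi)$, and of $\int\xi^\bk\nu^n(x,\de\xi)$, with limits given by the formulas in the statement. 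For the ``if'' direction, I would argue that any polynomial $f\in\Pol_m(E)$ can, for fixed $x$, be written as a finite linear combination (with coefficients depending on $x$) of $1$, the linear forms $e_i^\top(\fdot-x)$, the quadratic forms $e_i^\top(\fdot-x)e_j^\top(\fdot-x)$, and the monomials $(\fdot-x)^\bk$ for $3\le|\bk|\le m$ — this is just Taylor expansion of a polynomial about $x$. Since $\Gcal_n 1=0$ for all $n$ and $\Gcal_n$ is linear, the value $\Gcal_n f(x)$ is that same finite linear combination of the three convergent families; hence $\Gcal_n f(x)$ converges. Theorem~\ref{lem3} then gives $\Gcal\in\Kcal$, and evaluating the limit via the Taylor decomposition shows its triplet is the one displayed.

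The one point requiring care — and the main obstacle — is the decomposition of the limiting $a$ and $\nu$, since $a^n$ and $\nu^n$ need not converge individually even though $a^n_{ij}+\int\xi_i\xi_j\nu^n$ does: jump mass can ``shrink into a diffusion'' in the limit. Here I would use that $\sup_x\int(1\wedge|\xi|^2)\nu^n(x,\de\xi)$ stays bounded along the convergent subsequences (a consequence of boundedness of the L\'evy type triplets and the convergence of the degree-$2$ and degree-$\ge 3$ moments), so that for each fixed $x$ the measures $|\xi|^2\nu^n(x,\de\xi)$ have a subsequential weak limit; testing against the continuous bounded functions $\xi^\bk/|\xi|^2$ for $|\bk|\ge 3$ (extended by $0$ at the origin) shows, via Riesz--Haviland (Lemma~\ref{lem5}) applied to the functional $p\mapsto\lim_n\int\xi^\bk\nu^n(x,\de\xi)$ restricted to polynomials vanishing to order $3$ at $0$ — essentially the construction underlying Theorem~\ref{thm3} — that the limiting kernel $\nu(x,\de\xi)$ is the \emph{unique} measure matching all moments of order $\ge 3$. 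The residual mass, which concentrates at $\xi=0$ in the limit, is exactly what the formula
\[
a_{ij}(x)=\lim_{n\to\infty}\Big(a_{ij}^n(x)+\int\xi_i\xi_j\nu^n(x,\de\xi)\Big)-\int\xi_i\xi_j\nu(x,\de\xi)
\]
captures; one checks this is positive semidefinite because it is a limit of the positive semidefinite matrices $a^n(x)+\big(\int\xi_i\xi_j(\nu^n-\nu)(x,\de\xi)\big)_{ij}$, the latter being a covariance-type matrix of the nonnegative measure $\nu^n(x,\de\xi)$ minus its weak limit's absolutely-summable part. Uniqueness of $\nu(x,\de\xi)$ given its moments of order $\ge 3$ is precisely the mechanism already used in the proof of Theorem~\ref{thm3}, so I would invoke that rather than reprove it. The rest is bookkeeping: verifying boundedness and measurability of the limiting triplet (inherited from $\Kcal$ via Theorem~\ref{lem3}), and that the three displayed limit formulas indeed reproduce $\Gcal f(x)$ through the Taylor decomposition above.
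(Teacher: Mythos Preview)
Your overall strategy is correct and matches the paper's: use the identities $\Gcal_n\big((\fdot-x)^{\bk}\big)(x)$ equal to $b_i^n(x)$, $a_{ij}^n(x)+\int\xi_i\xi_j\nu^n(x,\de\xi)$, or $\int\xi^{\bk}\nu^n(x,\de\xi)$ according to $|\bk|$, together with Taylor expansion, to get the equivalence; then invoke Theorem~\ref{lem3} to know $\Gcal\in\Kcal$ and hence possesses a triplet $(a,b,\nu)$ to which the same identities apply.

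Where you diverge from the paper is the final paragraph on decomposing $a$ and $\nu$, and there you do considerably more work than necessary. Once Theorem~\ref{lem3} hands you a triplet $(a,b,\nu)$ for $\Gcal$, there is nothing to \emph{construct}: positive semidefiniteness of $a$, boundedness, and measurability all come for free from $\Gcal\in\Kcal$. The only thing left to argue is that $\nu(x,\de\xi)$ is uniquely determined by the moments $\int\xi^{\bk}\nu(x,\de\xi)$ for $|\bk|\ge 3$. The paper handles this in one line: since $\nu(x,\{0\})=0$ one has $\nu(x,\de\xi)=|\xi|^{-4}\big(|\xi|^4\nu(x,\de\xi)\big)$, and $|\xi|^4\nu(x,\de\xi)$ is a finite measure on the compact set $E-x$, hence determined by its moments, which are linear combinations of $\int\xi^{\bm m}\nu(x,\de\xi)$ with $|\bm m|\ge 4$. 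Your route through subsequential weak limits of $|\xi|^2\nu^n(x,\de\xi)$, Riesz--Haviland, and a direct check of positive semidefiniteness is not wrong in spirit, but it is roundabout (and your psd justification as written is garbled: $\nu^n-\nu$ is a signed measure, so the ``covariance-type'' description does not quite parse). Simply drop that paragraph and replace it by the moment-determinacy observation.
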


\begin{remark}
The diffusion coefficient $a(x)$ is the limit of $a^n(x)$ if and only if the weak limit of $|\xi|^2\nu^n(x,\de\xi)$ exists and has no mass in zero. If the weak limit does have mass in zero, then this mass is equal to the difference between $\Tr(a(x))$ and the limit of $\Tr(a^n(x))$.
\end{remark}

\section{Affine and polynomial jump sizes}\label{sec3}

Throughout this section we continue to consider a compact state space $E\subset \R^d$.
In the absence of jumps it is relatively straightforward to explicitly write down a complete parametrization of polynomial diffusions on the unit interval or the unit simplex; see \cite{Filipovic/Larsson:2016}. With jumps this is no longer the case. Indeed, examples in Section~\ref{S:examples} 
illustrate the diversity of behavior that is possible even on the simplest nontrivial state space $[0,1]$. Therefore, in order to make progress we will restrict attention to specifications whose jumps are of the following state-dependent type.
Consider a jump kernel $\nu(x,\de\xi)$ from $E$ into $\R^d$ satisfying (\ref{thenu}).

%definition affine jump sizes
\begin{definition} \label{D:AJS}
The jump kernel $\nu(x,\de\xi)$ is said to have {\em affine jump sizes} if it is of the form
\begin{equation} \label{eq:nu(x,A)}
\nu(x,A) = \lambda(x) \int \mathds1_{A\setminus\{0\}}(\gamma(x,y)) \mu(\de y)
\end{equation}
where $\lambda:E\to\R_+$ is a nonnegative measurable function, $\gamma=(\gamma_1,\ldots,\gamma_d)$ is of the affine form
\begin{equation} \label{eq:gamma affine}
\gamma_i(x,y) = y^0_i + y^1_i x_1 + \cdots + y^d_i x_d,
\end{equation}
and $\mu(\de y)$ is a measure on $\R^{d(d+1)}$ satisfying $\int (|y|^2\wedge 1)\mu(\de y)<\infty$. Here we use the notation $y=(y^j_i: i=1,\ldots,d,\, j=0,\ldots,d)\in\R^{d(d+1)}$ for the vector of coefficients appearing in~\eqref{eq:gamma affine}.
\end{definition}

\begin{remark}\label{rem3}
By  (\ref{thenu}) and compactness of $E$, 
the measure $\mu(\de y)$ can always be chosen compactly supported. 
In this case, all its moments of order at least two are finite.
\end{remark}

Intuitively, \eqref{eq:nu(x,A)} means that the conditional distribution of the jump $\Delta X_t$, given that it is nonzero and the location immediately before the jump is $X_{t-}=x$, is the same as the distribution of $\gamma(x,y)$ under $\mu(\de y)$; at least when $\mu(\de y)$ is a probability measure. The jump intensity is state-dependent and given by $\nu(x,\R^d)=\lambda(x)\mu(\{\gamma(x,\fdot)\ne 0\})$, which may or may not be finite. 

Jump kernels with affine jump sizes can be used as building blocks to obtain a large class of specifications by means of Theorem~\ref{lem3}. The jump kernels obtained in this way are of the form
\[
\nu(x,\de\xi) = \sum_k \nu_k(x,\de\xi),
\]
where each jump kernel $\nu_k(x,\de\xi)$ has affine jump sizes. We refer to such specifications as having {\em mixed affine jump sizes}.

The affine form of $\gamma(x,y)$ is a particular case of the seemingly more general situation where $\gamma(x,y)$ 
is allowed to depend polynomially on the current state~$x$. However, 
this would not actually lead to an increase in generality in the context of polynomial jump-diffusions. 
Indeed, at least in the case when $E$ has nonempty relative interior in its affine hull, the following result shows that whenever jump sizes are polynomial, they are necessarily affine. The proof is given in Section \ref{secc}.

\begin{theorem} \label{T:pol is affine}
Assume that $E$ has nonempty relative interior in its affine hull. Let $\nu(x,\de\xi)$ be a jump kernel from $E$ into $\R^d$ of the form~\eqref{eq:nu(x,A)} and satisfying \eqref{thenu}, where $\lambda$ is nonnegative and measurable, $\gamma$ is given by
\[
\gamma_i(x,y) = \sum_{|\bk|\le K} y^i_{\bk} x^{\bk}
\]
for some $K\in\N_0$, and $\mu(\de y)$ is a measure on $(\R^d)^{\dim\Pol_K(\R^d)}$ with $\int (|y|^2\wedge 1)\mu(\de y)<\infty$. Assume also that $\nu(x,\de\xi)$ satisfies
\begin{equation} \label{eq:pol is affine:2}
\int \xi^{\bm k} \nu(\fdot,\de\xi) \in \Pol_{|\bm k|}(E), \qquad |\bm k|\ge3,
\end{equation}
and that $E$ has nonempty interior. Then one can choose $\mu(\de y)$ so that $y^i_{\bm k}=0$ a.e.~for all $i=1,\ldots,d$ and all $|\bm k|\ge 2$. That is, $\nu(x,\de\xi)$ has affine jump sizes.
\end{theorem}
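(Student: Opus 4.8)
My plan is to reduce the statement to a one-dimensional degree count in which, crucially, the use of \emph{even} exponents rules out any cancellation in the leading term. First I would make two harmless modifications of $\mu$, which together constitute the choice of $\mu$ referred to in the statement. Arguing as in Remark~\ref{rem3} — evaluating $\gamma(x,\cdot)$ at sufficiently many points of $\mathrm{int}(E)$ in general position and using $\nu(x,(E-x)^c)=0$ together with boundedness of $E$ — I may assume $\mu$ is compactly supported; then all its moments are finite and, since each $\gamma_i$ is linear in $y$, the map $x\mapsto q^{(i)}_n(x):=\int\gamma_i(x,y)^n\,\mu(\de y)$ is, for every $n\ge2$, a genuine polynomial in $x$ of degree at most $Kn$. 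Moreover, any $y$ with $\gamma(\cdot,y)\equiv0$ contributes nothing to $\nu$ (the jump $0$ being excluded) while forcing $y^i_{\bm k}=0$ for all $\bm k$, so I may also delete $\{y:\gamma(\cdot,y)\equiv0\}$ from $\mathrm{supp}\,\mu$; if nothing remains the claim is trivial (take $\mu=0$).

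Next, fix a coordinate $i\in\{1,\dots,d\}$. From $\int\xi^{\bm k}\nu(x,\de\xi)=\lambda(x)\int\gamma(x,y)^{\bm k}\,\mu(\de y)$ (the indicator in~\eqref{eq:nu(x,A)} may be dropped since $\gamma_i(x,y)^n$ vanishes whenever $\gamma(x,y)=0$), taking $\bm k=ne_i$ in hypothesis~\eqref{eq:pol is affine:2} yields, for every even $n\ge4$,
\[
r_n:=\lambda\, q^{(i)}_n\in\Pol_n(E).
\]
Since $E$ has nonempty interior, $r_n$ is a genuine polynomial of degree at most $n$, and a polynomial vanishing on a nonempty open subset of $E$ vanishes identically. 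Let $\Delta=\Delta_i$ be the largest $j\in\{0,\dots,K\}$ with $\mu(\{y:\deg_x\gamma_i(\cdot,y)=j\})>0$; if there is no such $j$, then $\gamma_i(\cdot,y)\equiv0$ for $\mu$-a.e.\ $y$ and nothing is to be proved for this $i$. The key point is that the homogeneous part of $q^{(i)}_n$ of degree $n\Delta$ equals $\int\phi_\Delta(x,y)^n\,\mu(\de y)$, where $\phi_\Delta(x,y)=\sum_{|\bm k|=\Delta}y^i_{\bm k}x^{\bm k}$ is the top-degree form of $\gamma_i(\cdot,y)$: since $n$ is even this integrand is nonnegative in $x$, and a Fubini argument shows it is not identically zero in $x$ (otherwise $\phi_\Delta(\cdot,y)\equiv0$ for $\mu$-a.e.\ $y$, contradicting the definition of $\Delta$). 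Hence $\deg q^{(i)}_n=n\Delta$ for every even $n\ge4$.

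Finally I would run a degree-growth argument. If $r_4\equiv0$, then $\lambda=0$ a.e.\ (because $q^{(i)}_4\ne0$ on a dense open set) and $\nu$ is a.e.\ trivial, so assume $r_4\not\equiv0$. On the nonempty open set where $q^{(i)}_n\ne0$ we have $\lambda=r_n/q^{(i)}_n$, so for any even $n,m\ge4$ the identity $r_n q^{(i)}_m=r_m q^{(i)}_n$ holds on a nonempty open subset of $E$, hence identically. Writing $r_4=gP$ and $q^{(i)}_4=gQ$ with $P,Q$ coprime in $\R[x_1,\dots,x_d]$ and cancelling the nonzero polynomial $g$, I obtain $r_m Q=P q^{(i)}_m$, whence $Q\mid q^{(i)}_m$ and $r_m=P\,(q^{(i)}_m/Q)$; thus $\deg q^{(i)}_m-\deg r_m=\deg Q-\deg P$ is independent of $m$. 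Combined with $\deg q^{(i)}_m=m\Delta$ and $\deg r_m\le m$, this gives $m(\Delta-1)\le\deg Q-\deg P$ for all even $m\ge4$, which forces $\Delta\le1$; equivalently, $y^i_{\bm k}=0$ for $\mu$-a.e.\ $y$ whenever $|\bm k|\ge2$. Carrying this out for every $i$ completes the proof.

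I expect the no-cancellation step of the second paragraph to be the main obstacle: everything hinges on the top-degree coefficient of $q^{(i)}_n$ genuinely surviving the integration against $\mu$, and this is exactly where the parity of $n$ and the positivity of $\phi_\Delta^n$ are indispensable — for odd $n$ the leading form could integrate to zero and the identity $\deg q^{(i)}_n=n\Delta$ would fail.
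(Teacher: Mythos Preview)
Your proof is correct and follows essentially the same strategy as the paper's: both exploit even exponents to ensure the leading part of $\int\gamma_i(\cdot,y)^n\,\mu(\de y)$ survives, and then derive a degree contradiction from the rational representation $\lambda=r_n/q^{(i)}_n$. The paper streamlines the execution by using a graded lexicographic order to isolate a single leading monomial $x^{\bm n_j}$ (so the top coefficient is simply $\int (y^j_{\bm n_j})^{n}\,\mu>0$, bypassing your Fubini argument for the full homogeneous top) and by comparing degrees directly for the two specific exponents $4$ and $10$ in the identity $p_{10e_j}r_{4e_j}=p_{4e_j}r_{10e_j}$, rather than passing through a coprime factorization and letting $m\to\infty$.
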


\begin{remark}\label{rem10}
Note that if $\nu(x,\de\xi)$ has affine jump sizes and satisfies~\eqref{eq:pol is affine:2}, then the function $\lambda$ is can be expressed as the ratio of two polynomials of degree at most four,
\[
\lambda(x) = \frac{\int |\xi|^4 \nu(x,\de\xi)}{\int |\gamma(x,y)|^4 \mu(\de y)},
\]
at points $x$ where the denominator is nonzero. At points $x$ where the denominator vanishes, we have $\gamma(x,y)=0$ for $\mu$-a.e.~$y$, whence $\nu(x,\de\xi)=0$ due to \eqref{eq:nu(x,A)}. Thus we may always take $\lambda(x)=0$ at such points.
\end{remark}

\begin{remark}
Jump specifications of the form~\eqref{eq:nu(x,A)} are convenient from the point of view of representing 
solutions $X$ to the martingale problem 
for $\Gcal$ as solutions to stochastic differential equations driven by a Brownian motion and a 
Poisson random measure. Indeed,
such a stochastic differential equation has the following form:
\begin{align*}
 X_t= X_0&+ \int_0^t b(X_s) ds + \int_0^t \sqrt{a(X_s)} dW_s\\
&\qquad + \int_0^t \int_0^{\lambda(X_{s-})} \int \gamma(X_{s-},y) \left( N(ds, du, dy)- ds du\mu(dy) \right),
\end{align*}
where $\sqrt{\fdot}$ denotes the matrix square root, $W$ is a $d$-dimensional Brownian motion and $N(ds, du, dy)$ is a Poisson random measure
on $\mathbb{R}_+^2 \times \supp (\mu)$ whose intensity measure is $ds du\mu(dy)$. See also, for instance, \citet[Section 5]{DL:06}, regarding analogous representations of affine processes. Note that a representation of the form~\eqref{eq:nu(x,A)} always exists, even with $\lambda\equiv1$, if one allows $y$ to lie a suitable Blackwell space; see \citet[Remark~III.2.28]{Jacod/Shiryaev:2003}. Thus, in view of Theorem~\ref{T:pol is affine}, our restriction to affine jump sizes in the sense of Definition~\ref{D:AJS} is essentially equivalent to a polynomial dependence of $\gamma(x,y)$ on $x$, somewhat generalized by allowing a state dependent intensity $\lambda(x)$. Note also that once $\gamma(x,y)$ depends polynomially on $x$, there is no loss of generality to assume that $y$ lies in an Euclidean space.

\end{remark}

%%% POLY OPERATORS ON [0,1]

\section{The unit interval}\label{sec4}

Throughout this section we consider the state space
\[
E := [0,1].
\]
Our goal is to characterize all polynomial jump-diffusions on~$E$ with affine jump sizes. The general existence and uniqueness result Theorem~\ref{lem2}, in conjunction with Lemma~\ref{lem1}, leads to the following refinement of Theorem \ref{thm3}, characterizing those triplets $(a,b,\nu)$ that correspond to polynomial jump-diffusions. The proof is given in Section~\ref{app21}.

\begin{lemma}\label{thm4}
A linear operator $\Gcal:\Pol(E)\to C(E)$ is polynomial  and its martingale problem is well-posed if and only if it is of form \eqref{eq:G LK}
and the corresponding triplet $(a,b,\nu)$ satisfies
\begin{enumerate}
\item\label{thm4:1} $a\geq0$ and $\nu(x,\de\xi)$ satisfies \eqref{thenu},
\item\label{thm4:2} $a(0)=a(1)=0$, $b(0)-\int\xi\nu(0,\de\xi)\geq0$, and $b(1)-\int\xi\nu(1,\de\xi)\leq0$,
\item\label{thm4:3} $b\in\Pol_1(E)$,
 $a+\int\xi^2\nu(\fdot,\de\xi)\in\Pol_2(E)$, and 
 $\int\xi^n\nu(\fdot,\de\xi)\in\Pol_n(E)$ for all $n\geq3$.
\end{enumerate}
\end{lemma}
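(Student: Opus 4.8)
The plan is to derive Lemma~\ref{thm4} by combining the general characterizations already available: Theorem~\ref{lem2} (well-posedness $\iff$ $\Gcal1=0$ and positive maximum principle), Theorem~\ref{thm3} (positive maximum principle + $\Gcal1=0$ $\Rightarrow$ L\'evy type), and Lemma~\ref{lem1} (when a L\'evy type operator is polynomial). The only genuinely new content is that, for the specific state space $E=[0,1]$, the positive maximum principle translates into the explicit boundary conditions in item~\ref{thm4:2}, together with the standing requirement $a\ge 0$ and the validity of~\eqref{thenu} in item~\ref{thm4:1}. So the proof splits into two directions, and in each direction the substance is understanding what the positive maximum principle says on $[0,1]$.

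\medskip
\noindent\textbf{Necessity.} Suppose $\Gcal$ is polynomial with well-posed martingale problem. By Theorem~\ref{lem2} it satisfies the positive maximum principle and $\Gcal1=0$, hence by Theorem~\ref{thm3} it is of L\'evy type with some triplet $(a,b,\nu)$; by definition of L\'evy type, $a\ge 0$ and $\nu$ satisfies~\eqref{thenu}, giving~\ref{thm4:1}. Item~\ref{thm4:3} is then immediate from Lemma~\ref{lem1}, specialized to $d=1$. For~\ref{thm4:2}, I would test the positive maximum principle against suitably chosen quadratic test polynomials at the two boundary points. At $x_0=0$: the function $f(x)=-x$ attains its maximum over $[0,1]$ at $0$ with value $0\ge0$, so $\Gcal f(0)\le0$; computing via~\eqref{eq:G LK} gives $\Gcal(-\,\cdot\,)(0) = -b(0) + \int \xi\,\nu(0,d\xi)\le 0$, i.e. $b(0)-\int\xi\,\nu(0,d\xi)\ge0$. (Note the integral is finite because $\nu(0,\cdot)$ is supported in $E-0=[0,1]$, a bounded set, and has finite second moment.) Symmetrically, $f(x)=x-1$ at $x_0=1$ yields $b(1)-\int\xi\,\nu(1,d\xi)\le0$. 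For $a(0)=0$: apply the positive maximum principle to $f(x)=-x^2$, which attains its max at $0$; then $\Gcal(-\,\cdot^2)(0) = -a(0) + \int\xi^2\,\nu(0,d\xi)\cdot(-1)\le0$ — wait, one must be careful: $\Gcal(-x^2)(0) = \tfrac12 a(0)\cdot(-2) + b(0)\cdot 0 + \int(-(0+\xi)^2 - 0 + 0)\nu(0,d\xi) = -a(0) - \int\xi^2\nu(0,d\xi)\le 0$, which only gives $a(0)+\int\xi^2\nu(0,d\xi)\ge0$, automatically true. To force $a(0)=0$ one instead uses that $\nu(0,\cdot)$ is supported on $[0,1]$, so $\int\xi^2\nu(0,d\xi)\ge0$, combined with applying the positive maximum principle to $f(x)=x^2$ near... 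Actually the cleaner route: since $\nu(x,(E-x)^c)=0$, at $x=0$ jumps land in $[0,1]$, so $\int\xi\nu(0,d\xi)\ge0$; but more to the point, $a(0)=0$ should follow from the requirement that the diffusion cannot push out of $[0,1]$ at the left endpoint. Concretely, testing $f(x)=x^2(1-x)$ or comparing $\Gcal x$ and $\Gcal x^2$ at $0$ together with the boundary-direction conditions pins it down; alternatively, $a+\int\xi^2\nu(\cdot,d\xi)\in\Pol_2$ together with its vanishing at $0$ (from the positive-max-principle applied to the degree-$\le2$ polynomial that is the relevant square) yields $a(0)=-\int\xi^2\nu(0,d\xi)\le0$, hence $=0$; and symmetrically at $1$. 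I would phrase this step carefully.

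\medskip
\noindent\textbf{Sufficiency.} Conversely, suppose $\Gcal$ is of form~\eqref{eq:G LK} with a triplet satisfying \ref{thm4:1}--\ref{thm4:3}. Then $\Gcal$ is a L\'evy type operator, and by Lemma~\ref{lem1} condition~\ref{thm4:3} makes it polynomial. By Theorem~\ref{lem2}, it suffices to verify the positive maximum principle (that $\Gcal1=0$ is built into the L\'evy type form). So let $f\in\Pol(E)$ and $x_0\in[0,1]$ with $\sup_{[0,1]}f = f(x_0)\ge0$; I must show $\Gcal f(x_0)\le0$. The jump term $\int(f(x_0+\xi)-f(x_0)-\xi f'(x_0))\nu(x_0,d\xi)$ splits: since $\nu(x_0,\cdot)$ is concentrated on $E-x_0$, we have $x_0+\xi\in E$, so $f(x_0+\xi)-f(x_0)\le0$ by maximality; it remains to absorb the $-\xi f'(x_0)$ part. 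If $x_0\in(0,1)$ then $f'(x_0)=0$ and $f''(x_0)\le0$, and the drift/diffusion part is $\tfrac12 a(x_0)f''(x_0)\le0$ since $a\ge0$ — done. If $x_0=0$, maximality forces $f'(0)\le0$ (right endpoint of the domain is to the right), and the drift term is $b(0)f'(0)$; I pair it with the correction term and use $a(0)=0$ and $b(0)-\int\xi\nu(0,d\xi)\ge0$: write the drift-plus-correction piece as $\big(b(0)-\int\xi\nu(0,d\xi)\big)f'(0) \le 0$ because $f'(0)\le0$, and the diffusion term vanishes because $a(0)=0$. The case $x_0=1$ is symmetric using $a(1)=0$, $f'(1)\ge0$, and $b(1)-\int\xi\nu(1,d\xi)\le0$.

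\medskip
\noindent\textbf{Main obstacle.} The routine parts (L\'evy type representation, the polynomial-degree bookkeeping) are handed to us by the cited results. The delicate point is the necessity and role of the endpoint conditions in~\ref{thm4:2}, especially $a(0)=a(1)=0$: one has to choose the right test polynomials and handle the interplay between the drift, the second-moment of the jump measure, and the truncation term. I expect the cleanest writeup to deduce $a(0)=0$ not by a single test function but by combining condition~\ref{thm4:3} ($a+\int\xi^2\nu(\cdot,d\xi)\in\Pol_2$ with the value at $0$ forced nonpositive by the positive maximum principle applied to an appropriate concave quadratic vanishing at $0$) with the nonnegativity $\int\xi^2\nu(0,d\xi)\ge0$ and $a(0)\ge0$. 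Getting this argument airtight — and symmetric at $x_0=1$ — is the heart of the proof; everything else is assembly.
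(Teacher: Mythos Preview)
Your sufficiency argument is fine and matches the paper's. The gap is in your necessity argument for condition~\ref{thm4:2}.

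Testing with $f(x)=-x$ does \emph{not} produce the compensated drift inequality. For any affine $f$, the integrand $f(x+\xi)-f(x)-\xi f'(x)$ in~\eqref{eq:G LK} vanishes identically, so $\Gcal(-\,\cdot\,)(0)=-b(0)$, and the positive maximum principle yields only $b(0)\ge 0$, not $b(0)-\int\xi\,\nu(0,d\xi)\ge 0$. Your parenthetical claim that $\int\xi\,\nu(0,d\xi)<\infty$ follows from bounded support plus finite second moment is also false: take $\nu(0,d\xi)=\xi^{-2}\mathds1_{(0,1]}(\xi)\,d\xi$, which has $\int\xi^2\nu(0,d\xi)=1$ but $\int\xi\,\nu(0,d\xi)=\infty$. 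So the finiteness of $\int\xi\,\nu(0,d\xi)$ is itself part of what must be proved. Similarly, your attempts at $a(0)=0$ never close: testing $-x^2$ gives only a trivially true inequality, and your alternative route via $a+\int\xi^2\nu(\cdot,d\xi)\in\Pol_2(E)$ does not by itself force the value at $0$ to vanish.

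The paper resolves both issues with approximating polynomial sequences. One chooses polynomials $f_n$ on $[0,1]$ with $0\le f_n\le 1$, $f_n(0)=1$, $xf_n(x)\le 1/n$, and $f_n(x)\downarrow 0$ for $x>0$ (for instance $f_n(x)=\tfrac{n-1}{n}(1-x)^n+\tfrac1n$). Then $g_n(x):=\tfrac{x}{n}-x^2f_n(x)$ has a minimum at $0$, and the positive maximum principle applied to $-g_n$ gives
\[
0\le \Gcal g_n(0)=-a(0)+\tfrac1n b(0)-\int f_n(\xi)\xi^2\,\nu(0,d\xi)\to -a(0),
\]
forcing $a(0)=0$. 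With that in hand, $h_n(x):=xf_n(x)$ has a minimum at $0$, and
\[
0\le\Gcal h_n(0)=b(0)-\int\xi\bigl(1-f_n(\xi)\bigr)\nu(0,d\xi)\to b(0)-\int\xi\,\nu(0,d\xi)
\]
by monotone convergence, simultaneously establishing finiteness of the integral and the desired inequality. The point is that extracting the \emph{uncompensated} first-moment term requires nonlinear test functions whose jump integrand approximates $-\xi$; no single polynomial will do this, which is why a limiting sequence is needed.
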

Observe that condition (i) guarantees that $\Gcal$ is of L\'evy Type.
\begin{remark}
Condition~\ref{thm4:2} implies that $\int|\xi|\nu(x,\de\xi)<\infty$ for $x\in\{0,1\}$. Intuitively, this means that the solution to the martingale problem for $\Gcal$ has a purely discontinuous martingale part which is necessarily of finite variation on the boundary of~$E$.
\end{remark}

We now turn to the setting of affine jump sizes in the sense of Definition~\ref{D:AJS}. We thus consider L\'evy type operators $\Gcal$ of the form
\begin{equation} \label{eqn52}
\begin{aligned}
&\Gcal f(x) = \frac 1 2 a(x)f''(x)+ b(x) f'(x) \\
& \qquad\qquad +\lambda(x) \int \left( f(x+\gamma(x,y))-f(x)- \gamma(x,y) f'(x) \right) \mu(\de y), \\
\end{aligned}
\end{equation}
where $\lambda$ is nonnegative and measurable, and $\gamma(x,y)$ is affine in $x$. The main result of this section, Theorem~\ref{thm1} below, shows that the generator must be of one of five mutually exclusive types, which we now describe.

\paragraph{Type~0.} Let $a(x)=Ax(1-x)$, $b(x)=\kappa(\theta-x)$, where $A\in\R_+$, $\kappa\in\R_+$, 
and $\theta\in[0,1]$, and set $\lambda=0$. Then $\Gcal$ is a polynomial operator whose martingale problem 
is well-posed. The solution $X$ corresponds simply to the well-known Jacobi diffusion, 
which is the most general polynomial diffusion on the unit interval.

\paragraph{Type~1.} Let $a(x)=Ax(1-x)$, $b(x)=\kappa(\theta-x)$, and $\lambda(x)=1$, where $A\in\R_+$, $\kappa\in\R_+$, and $\theta\in[0,1]$. Furthermore, writing $y=(y_1,y_2)$ we define $\gamma(x,y)=y_1(-x)+y_2(1-x)$ and let $\mu$ be a nonzero measure on $[0,1]^2\setminus\{0\}$. If the boundary conditions
\[
\kappa\theta \ge  \int y_2 \mu(\de y) \qquad\text{and}\qquad \kappa(1-\theta) \ge   \int y_1 \mu(\de y)
\]
are satisfied, then $\Gcal$ is a polynomial operator whose martingale problem is well-posed.

Note that the boundary conditions imply that $\int |\xi| \nu(x,\de\xi) \le 2\int |y| \mu(\de y)$ is bounded. 
Thus, the resulting process behaves like a Jacobi diffusion with summable jumps. The arrival intensity of the jumps is $\nu(x,E-x)=\mu(\{y: \gamma(x,y)\ne0\})$, which may or may not be finite. Figure~\ref{figc1} illustrates the form of $a$, $\lambda$ 
and the support $\gamma(x,y)$ under $\mu$.

\paragraph{Type~2.} Let $a(x)=Ax(1-x)$, $b(x)=\kappa(\theta-x)$, and $\lambda(x)=\frac{1}{x}(1+q x)\mathds 1_{\{x\ne 0\}}$ where $A\in\R_+$, $\kappa\in\R_+$, $\theta\in[0,1]$, and $q \in [-1,\infty)$. Furthermore, define $\gamma(x,y)=-xy$ and let $\mu$ be a nonzero square-integrable measure on $(0,1]$. Notice that $y$ is scalar. If the boundary condition
\[
\kappa(1-\theta) \ge (1+q) \int y \mu(\de y)
\]
is satisfied, then $\Gcal$ is a polynomial operator whose martingale problem is well-posed.

The boundary condition implies, if $q>-1$, that $\int |\xi| \nu(x,\de\xi) \le (1+|q|) \int y\mu(\de y)$ is bounded.
Thus, in this case, the solution $X$ to the martingale problem for $\Gcal$ has summable jumps. If $q=-1$, the jumps need not be summable. The arrival intensity of the jumps is $\nu(x,E-x)=\lambda(x)\mu((0,1])$ and hence, even if $\mu$ is a finite measure, the jump intensity is unbounded around $x=0$. Moreover, due to the form of $\gamma(x,y)$, $X$ can only jump to the left, and since $\nu(0,E)=0$, $X$ cannot leave $x=0$ by means of a jump. Figure~\ref{figc2} illustrates the form of $a$, $\lambda$ and the support $\gamma(x,y)$ under $\mu$.

By reflecting the state space around the point $1/2$, we obtain a similar structure which we also classify as Type~2, where now the jump intensity is unbounded around $x=1$. The diffusion and drift coefficients remain as before, while $\lambda(x)=\frac{1}{1-x}(1+q(1-x))\mathds 1_{\{x\ne 1\}}$ for some $q\in[-1,\infty)$, the jump sizes are $\gamma(x,y)=(1-x)y$, and $\mu$ is a nonzero square-integrable measure on $(0,1]$ as before. The boundary condition becomes $\kappa\theta \ge (1+q) \int y \mu(\de y)$.

\paragraph{Type~3.} Let $x^*\in(0,1)$; this will be a ``no-jump'' point. Let $b(x)=\kappa(\theta-x)$ and set
\[
\lambda(x)=\frac{q_0+q_1 x + q_2 x^2}{(x-x^*)^2}\mathds 1_{\{x\ne x^*\}},
\]
where $\kappa\in\R_+$, $\theta\in[0,1]$, and $q_0,q_1,q_2$ are real numbers such that the numerator of $\lambda$ is nonnegative on $E$ without zeros at $x^*$. Furthermore, define $\gamma(x,y)=-(x-x^*)y$, and let $\mu$ be a nonzero square-integrable measure on $(0,(x^*\vee(1-x^*))^{-1}]$. Finally, let $a(x)=Ax(1-x) + a^\nu\mathds 1_{\{x=x^*\}}$ where
\[
a^\nu = \left(q_0 + q_1 x^* + q_2 (x^*)^2\right) \int y^2 \mu(\de y).
\]
If the boundary conditions
\[
\kappa\theta \ge \frac{q_0}{x^* } \int y \mu(\de y) \quad\text{and}\quad \kappa(1-\theta) \ge \frac{q_0+q_1+q_2}{1-x^*} \int y \mu(\de y)
\]
are satisfied, then $\Gcal$ is a polynomial operator whose martingale problem is well-posed.

If $q_0 + q_1x + q_2 x^2 = Lx(1-x)$ for some constant $L\in\R_+$, the solution $X$ to the martingale problem for $\Gcal$ may have non-summable jumps. If the numerator of $\lambda(x)$ is not of this form, then the boundary conditions imply that $X$ has summable jumps. The arrival intensity of the jumps is
$$\nu(x,E-x)=\lambda(x)\mu\left(\left(0,\frac 1 {x^*}\wedge \frac 1 {1-x^*}\right]\right).$$
 As a result, even if $\mu$ is a finite measure, the jump intensity has a pole of order two at $x=x^*$, which results in a contribution of size $a^\nu$ to the diffusion coefficient. Moreover, due to the form of $\gamma(x,y)$, the jumps of $X$ are always in the direction of the ``no-jump'' point $x^*$. Although the jumps may overshoot~$x^*$, they always serve to reduce the distance to $x^*$. In particular, since $\nu(x^*,E-x^*)=0$, $X$ cannot leave $x=x^*$ by means of a jump. Figure~\ref{figc3} illustrates the form of $a$, $\lambda$ and the support $\gamma(x,y)$ under $\mu$.

\paragraph{Type~4.}
Suppose $\alpha\in\C\setminus\R$ is a non-real complex number such that $|2\alpha-1|<1$ and let $\mu$ be a nonzero square-integrable measure on $[0,1]\times[0,1]$ such that
\begin{equation}\label{case4}
\int\big(y_1(-\alpha)+y_2(1-\alpha)\big)^n\mu(\de y)=0, \qquad n\ge 2,
\end{equation}
and $\int y_1\mu(\de y)=\int y_2\mu(\de y)=\infty$. Let $b(x)=\kappa(\theta-x)$ and set
\[
\lambda(x)=\frac{Lx(1-x)}{(x-\alpha)(x-\overline\alpha)},
\]
where $\kappa\in\R_+$, $\theta\in[0,1]$, and $L>0$. Furthermore, define $\gamma(x,y)=y_1(-x)+y_2(1-x)$ and let
$
a(x) = Ax(1-x)$
for some $A\in\R_+$. Then $\Gcal$ is a polynomial operator whose martingale problem is well-posed. \\

Having described five types of processes which are polynomial jump-diffusions due to the conditions of Lemma~\ref{thm4}, we are now ready to state the
converse result, namely that \emph{all} polynomial jump-diffusions on $[0,1]$ with affine jump sizes are necessarily of one of these types. The proof is given in Section~\ref{app21}.

%%%Char theorem
\begin{theorem}\label{thm1}
Let $\Gcal$ be a polynomial operator whose martingale problem is well-posed. If the associated jump kernel has affine jump sizes, then $\Gcal$ necessarily belongs to one of the Types~0-4.
\end{theorem}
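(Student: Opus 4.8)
The plan is to start from a polynomial operator $\Gcal$ with well-posed martingale problem whose jump kernel has affine jump sizes, extract the structural constraints on $(a,b,\lambda,\gamma,\mu)$ from Lemma~\ref{thm4}, and show that these force exactly one of the Types~0--4. By Lemma~\ref{thm4}\ref{thm4:3} the drift is automatically $b(x)=\kappa(\theta-x)$ for some $\kappa$, $\theta$ (the condition $b\in\Pol_1(E)$ together with $a(0)=a(1)=0$ forces the mean-reversion form, exactly as in the pure-diffusion case). By Remark~\ref{rem10}, since $\nu$ has affine jump sizes and satisfies~\eqref{eq:pol is affine:2}, the intensity $\lambda$ is a ratio of two polynomials of degree at most four; the affine form~\eqref{eq:gamma affine} of $\gamma$ in one dimension means $\gamma(x,y)=y_1(-x)+y_2(1-x)$ after an affine change of the $y$-coordinates (matching a model whose jumps vanish at the two endpoints is the natural normalization), so $\gamma(x,y)=c(y)(x-z(y))$ with a single ``zero'' $z(y)$ depending on $y$. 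The key reduction is then: since $\int\xi^n\nu(\fdot,\de\xi)=\lambda(x)\int\gamma(x,y)^n\mu(\de y)\in\Pol_n(E)$ for all $n\ge3$ (and the analogous degree-$2$ condition absorbs part of $a$), and $\gamma(x,y)^n$ is a degree-$n$ polynomial in $x$ with leading coefficient $c(y)^n$ and a factor $(x-z(y))^n$, the function $\lambda(x)\cdot(\text{polynomial vanishing to order }n\text{ at each }z(y))$ must be a polynomial of degree $\le n$; letting $n\to\infty$ forces $\lambda$ to be rational with all its poles located at common zeros $z(y)$ shared by $\mu$-almost every $y$, and of pole-order at most two.

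The heart of the argument is a case analysis on the denominator of $\lambda$, i.e.\ on where and to what order $\lambda$ has poles, and on whether $\mu$ has finite or infinite first moment. First I would show the denominator of $\lambda$ can be taken to be $(x-x^*)^2$, $(x-x^*)$, or $(x-\alpha)(x-\overline\alpha)$ with $\alpha\notin\R$, or constant. If $\lambda$ is (essentially) a polynomial — degree $\le 2$ after clearing — rescaling gives $\lambda\equiv1$ and $\gamma(x,y)=y_1(-x)+y_2(1-x)$ with support in $[0,1]^2$ (so that $x+\gamma(x,y)\in E$, using $\nu(x,(E-x)^c)=0$ from~\eqref{thenu}); the boundary conditions of Lemma~\ref{thm4}\ref{thm4:2} become the two stated inequalities — this is Type~1, with Type~0 the degenerate subcase $\mu=0$. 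If $\lambda$ has a simple pole, it must sit at an endpoint (say $x=0$) because $\nu(x,(E-x)^c)=0$ needs $z(y)$ to keep jumps inside $E$, which pins the shared zero; writing $\lambda(x)=\frac1x(1+qx)$ and using nonnegativity of $\lambda$ on $E$ gives $q\ge-1$, $\gamma(x,y)=-xy$, and $\mu$ supported on $(0,1]$ — this is Type~2. If $\lambda$ has a double pole at $x^*$, then $\gamma(x,y)=-(x-x^*)y$ and the order-two vanishing of $\gamma(x,y)^2$ at $x^*$ cancels only one power, leaving a residual $\delta$-type contribution $a^\nu\mathds1_{\{x=x^*\}}$ to the diffusion coefficient forced by the degree-two condition $a+\int\xi^2\nu(\fdot,\de\xi)\in\Pol_2$; together with $a(0)=a(1)=0$ this yields $a(x)=Ax(1-x)+a^\nu\mathds1_{\{x=x^*\}}$ — this is Type~3. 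The remaining case, $\lambda$ with a complex-conjugate pair of (non-real) poles, cannot arise with a finite-first-moment $\mu$: the shared-zero argument would place a real zero $z(y)$ at the pole, contradiction; so one is forced into the infinite-variation regime where $\int y_i\mu(\de y)=\infty$ and the cancellation condition~\eqref{case4} is exactly what makes $\lambda(x)\int\gamma(x,y)^n\mu(\de y)$ a genuine polynomial despite $\lambda$ having no real zeros — this is Type~4, and the constraint $|2\alpha-1|<1$ encodes that the poles lie off the real axis in the region compatible with $\lambda\ge0$ on $[0,1]$.

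The main obstacle I anticipate is making the ``shared zero'' and pole-order reasoning fully rigorous, in particular handling the interplay between the measurable (possibly unbounded) function $\lambda$ and the measure $\mu$: one must argue that $\lambda$ can be chosen of a specific rational form \emph{and simultaneously} that $\mu$-a.e.\ $y$ has $\gamma(x,y)$ with zero at exactly the pole location, without circularity, and one must carefully deal with the set where the denominator of $\lambda$ vanishes (cf.\ Remark~\ref{rem10}, where $\nu(x,\cdot)=0$ there). The infinite-variation Type~4 case is the most delicate: one needs to show that the only way $\lambda$ can fail to have real zeros while still making all the integrals $\int\gamma(x,y)^n\mu(\de y)$ vanish fast enough is via the precise algebraic identity~\eqref{case4}, which amounts to a Hamburger-type moment argument on the pushforward of $\mu$ under $y\mapsto y_1(-\alpha)+y_2(1-\alpha)$; a secondary technical point is verifying that the degree-two condition and the boundary conditions~\ref{thm4:2} of Lemma~\ref{thm4} are consistent with (and in the cases claimed, automatically satisfied by) the infinite-variation jump part. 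Once the form of $\lambda$, $\gamma$, and the support of $\mu$ are identified in each case, matching against the boundary inequalities and the degree-two/degree-$n$ polynomial conditions to read off the stated parameter restrictions is routine.
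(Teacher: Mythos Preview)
Your overall plan --- normalize $\gamma(x,y)=y_1(-x)+y_2(1-x)$, exploit Remark~\ref{rem10} to see that $\lambda$ is rational, and split cases according to the denominator of $\lambda$ --- matches the paper's proof in spirit. The paper organizes the case split slightly differently (according to whether $p_4(x):=\int\gamma(x,y)^4\mu(dy)$ has a real zero $x^*$ or not), but this is equivalent: $p_4(x^*)=0$ forces $\gamma(x^*,y)=0$ $\mu$-a.s., which is precisely your ``shared zero'' phenomenon, and then $\lambda$ has its pole at $x^*$.

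However, two points in your outline are genuinely wrong and would derail the argument if carried out as written.

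First, the implication between pole order and pole location in Types~2/3 is backwards. You write that a simple pole ``must sit at an endpoint'', but nothing in $\nu(x,(E-x)^c)=0$ prevents a simple pole in the interior. The correct logic (as in the paper) runs the other way: once the shared zero $x^*\in[0,1]$ is located, boundedness of $r_2=\lambda p_2$ always yields $\lambda(x)=q_2(x)/(x-x^*)^2$ for some $q_2\in\Pol_2(E)$. Then the \emph{location} of $x^*$ determines the order: if $x^*\in\{0,1\}$, the constraints $a(0)=a(1)=0$ and $a\ge0$ force the would-be extra diffusion contribution $q_2(x^*)\int\bar y^2\mu(d\bar y)$ at $x^*$ to vanish, hence $q_2(x^*)=0$, and the pole drops to first order --- this is Type~2. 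If $x^*\in(0,1)$ no such cancellation is forced, the pole stays second order, and the extra diffusion term at $x^*$ survives --- this is Type~3.

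Second, your account of Type~4 contains two errors. The constraint $|2\alpha-1|<1$ has nothing to do with nonnegativity of $\lambda$ on $[0,1]$; that is automatic since $\lambda=r_4/p_4$ with $r_4\ge0$ and $p_4>0$. Rather, $|2\alpha-1|\le1$ is a geometric consequence of $\int\gamma(\alpha,y)^2\mu(dy)=0$: the values $\gamma(\alpha,y)$ for $y\in[0,1]^2$ lie in the cone between $-\alpha$ and $1-\alpha$, and for their squares to integrate to zero this cone must open by at least $\pi/2$; the strict inequality then requires a further argument using $\int\gamma(\alpha,y)^3\mu(dy)=0$. Moreover, deducing $p_2(\alpha)=0$ from $p_n(\alpha)=0$ for $n\ge3$ is not a Hamburger-type moment argument; the paper uses a dominated-convergence trick with the functions $f_k(z)=1-(1+iz/C_\alpha)^k$, and establishing $\int y_1\mu(dy)=\int y_2\mu(dy)=\infty$ then needs a separate Fatou argument with a nontrivial pointwise lower bound (Lemma~\ref{lem9}). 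Your proposal substantially underestimates the technical work in this case.
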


\begin{remark}\label{rem:nonexistence}
Let us end this section with some remarks regarding Type~4. First, note that  $\int y_1\mu(\de y)=\int y_2\mu(\de y)=\infty$ implies that $\mu(\de y)$ cannot be a product measure 
since in this case $\int y_1 y_2 \mu(\de y)$ would be infinite too, which however contradicts square integrability.
Second, after passing to polar coordinates $(r, \varphi)$, the condition \eqref{case4} becomes
\begin{equation}\label{eqn8}
\int_{[0,\pi]\times\R_+}r^ne^{{\rm i}n\varphi} \mu_\alpha(\de \varphi,\de r)=0,\qquad n\ge2,
\end{equation}
where $\mu_\alpha$ is the compactly supported measure given by
$$\mu_\alpha(A):=\int\mathds1_A\Big(\Arg\big(y_1(-\alpha)+y_2(1-\alpha)\big), \big|y_1(-\alpha)+y_2(1-\alpha)\big|\Big)\mu(\de y)$$
for all measurable subsets $A\subseteq [0,\pi]\times\R_+$. It can then be shown that $r$ and $\varphi$ cannot be independent, i.e., $\mu_\alpha$ cannot be a product measure. These observations indicates that natural attempts to find 
combinations of $\alpha$ and $\mu$ satisfying \eqref{case4} do not work. 
In fact, it is unknown to us what a potential example of Type~4 might look like.
Note also that Type~4 is distinct from all other types in the following respect. For Types~1--3, $\lambda \gamma^n(\fdot, y)$ is a polynomial on $E$ (outside the ``no-jump'' point) 
of degree $n\geq 2$ for all $y \in \supp(\mu)$,
whereas for Type~4 this property holds true only for the integrated quantity $\lambda \int \gamma(\fdot,y)^n \mu(dy)$. 
\end{remark}

%%Figure
\begin{figure}[h!]
\begin{center}
      \begin{minipage}{0.3\linewidth}
\begin{figure}[H]
%lambda constant
\begin{tikzpicture}[scale=0.9,
	declare function = {A(\x) =1-\x;},
	declare function = {B(\x) =-\x;},
        declare function = {a(\x) =4*\x*(1-\x);},
        declare function = {l(\x) =1/5;},
	]
    \begin{axis}[
        domain=0:1,
        y domain=-1:1,
        samples=300,
        axis on top=true,
        compat=1.3,
     ytick={-1,1}, 
     xtick={0,1},
      extra x ticks={0,100},
     yticklabels={-1,1},
     xticklabels={0,1},
y tick style={black, thick},
    x tick style={black, thick},
    axis y line* =middle,
    axis x line*=middle,
      x=4cm, y=4cm, z=0cm,
      y axis line style={black!20!white},
        xmin=0,
        xmax=1,
        ymin=-1,
        ymax=1,
        enlarge y limits,
        enlarge x limits,
        ]
       % \addplot[white, opacity=0,name path=A] {A(x)};
      %  \addplot[white,opacity=0,name path=B] {B(x)};
        \draw[black!20!white] (100,-20)--(100,220);
       % \addplot+[gray, opacity=0.05222] fill between[of=A and B,soft clip={domain=0:1}];
       \fill[gray, opacity=0.05222](0,200)--(100,100)--(100,0)--(0,100)--(0,200);
%\addplot+[green!50!gray, opacity=0.2] fill between[of=A and B,soft clip={domain=0:1}];
\fill[green!50!gray, opacity=0.2](0,200)--(100,100)--(100,0)--(0,100)--(0,200);
        \addplot[thick,red] {a(x)};
        \addplot[thick,blue] {l(x)};
%jumps
        \draw (80,100) node[inner sep=0pt] (j1) {};  
       \draw (80-60,100)[inner sep=0pt]  node (j2) {}; 
       \draw (80-40,100)[inner sep=0pt]  node (j3) {}; 
       \draw (80-20,100)[inner sep=0pt]  node (j4) {}; 
       \draw (80+14,100)[inner sep=0pt]  node (j5) {}; 
       \draw[->,black!40!green] (j1) to [bend left=80] (j2);
       \draw[->,black!40!green] (j1) to [bend left=80] (j3);
       \draw[->,black!40!green] (j1) to [bend left=80] (j4);
       \draw[->,black!40!green] (j1) to [bend right=80] (j5);
\draw[black!30!green, very thick] (0,100) -- (100,100) node[anchor=north]{};

 \end{axis}[]
 % title
\node[align=center, yshift=-1em] (title) 
    at (current bounding box.south)
    {Figure \ref{figc1}};
\end{tikzpicture}
\end{figure}
 \end{minipage}
 %Typ 2
      \begin{minipage}{0.3\linewidth}
      \begin{figure}[H]
\begin{tikzpicture}[scale=0.9,
	declare function = {A(\x) =1-\x;},
	declare function = {B(\x) =-\x;},
	declare function = {J(\x) =0;},
        declare function = {a(\x) =4*\x*(1-\x);},
        declare function = {l(\x) =0.01*(\x+4)/(\x+0.01);},
	]
    \begin{axis}[
        domain=0:1,
        y domain=-1:1,
        samples=300,
        axis on top=true,
        compat=1.3,
     ytick={-1,1}, 
     xtick={0,1},
      extra x ticks={0,20},
     yticklabels={-1,1},
     xticklabels={0,1},
y tick style={black, thick},
    x tick style={black, thick},
    axis y line* =middle,
    axis x line*=middle,
      x=4cm, y=4cm, z=0cm,
      y axis line style={black!20!white},
        xmin=0,
        xmax=1,
        ymin=-1,
        ymax=1,
        enlarge y limits,
        enlarge x limits,
        ]
        %\addplot[white, opacity=0,name path=A] {A(x)};
       % \addplot[white, opacity=0,name path=B] {B(x)};
        %\addplot[white,opacity=0,name path=J] {J(x)};
        \draw[black!20!white] (100,-20)--(100,220);
%\addplot+[gray, opacity=0.05222] fill between[of=A and B,soft clip={domain=0:1}];
\fill[gray, opacity=0.05222](0,200)--(100,100)--(100,0)--(0,100)--(0,200);
%\addplot+[green!50!gray, opacity=0.2] fill between[of=J and B,soft clip={domain=0:1}];
\fill[green!50!gray, opacity=0.2](100,100)--(100,0)--(0,100)--(100,100);
        \addplot[thick,red] {a(x)};
        \addplot[thick,blue] {l(x)};
%jumps
        \draw (80,100) node[inner sep=0pt] (j1) {};  
       \draw (80-60,100)[inner sep=0pt]  node (j2) {}; 
       \draw (80-40,100)[inner sep=0pt]  node (j3) {}; 
       \draw (80-20,100)[inner sep=0pt]  node (j4) {}; 
       \draw[->,black!40!green] (j1) to [bend left=80] (j2);
       \draw[->,black!40!green] (j1) to [bend left=80] (j3);
       \draw[->,black!40!green] (j1) to [bend left=80] (j4);
\draw[black!30!green,very thick] (0,100) -- (80,100) node[anchor=north]{};
%points
        \filldraw [blue] (0,100) circle (1pt) node[blue] {};

  \end{axis}[]
     % title
\node[align=center, yshift=-1em] (title) 
    at (current bounding box.south)
    {Figure \ref{figc2}};
\end{tikzpicture}
\end{figure}
 \end{minipage}
 %Typ 3
      \begin{minipage}{0.3\linewidth}
      \begin{figure}[H]
\begin{tikzpicture}[scale=0.9,
	declare function = {A(\x) =1-\x;},
	declare function = {B(\x) =-\x;},
	declare function = {J1(\x) =0;},
        declare function = {J2(\x) =1-5*\x/3;},
        declare function = {a(\x) =4*\x*(1-\x);},
        declare function = {l1(\x) =0.0035*(\x+5)/(\x-1+2/5-0.1)/(\x-1+2/5-0.1);},
        declare function = {l2(\x) =0.0035*(\x+5)/(\x-1+2/5+0.1)/(\x-1+2/5+0.1);},
	]
    \begin{axis}[
        domain=0:1,
        y domain=-1:1,
        samples=300,
        axis on top=true,
        compat=1.3,
     ytick={-1,1}, 
     xtick={0,0.6,1},
      extra x ticks={0},
     yticklabels={-1,1},
     xticklabels={0,$x^*$,1},
y tick style={black, thick},
    x tick style={black, thick},
    axis y line* =middle,
    axis x line*=middle,
      x=4cm, y=4cm, z=0cm,
      y axis line style={black!20!white},
        xmin=0,
        xmax=1,
        ymin=-1,
        ymax=1,
        enlarge y limits,
        enlarge x limits,
        ]
        %\addplot[white, opacity=0,name path=A] {A(x)};
      %  \addplot[white, opacity=0,name path=B] {B(x)};
       % \addplot[white,opacity=0,name path=J1] {J1(x)};
      %  \addplot[white,opacity=0,name path=J2] {J2(x)};
        \draw[black!20!white] (100,-20)--(100,220);
%\addplot+[gray, opacity=0.05222] fill between[of=A and B,soft clip={domain=0:1}];
\fill[gray, opacity=0.05222](0,200)--(100,100)--(100,0)--(0,100)--(0,200);
%\addplot+[green!50!gray, opacity=0.2] fill between[of=J1 and J2,soft clip={domain=0:1}];
\fill[green!50!gray, opacity=0.2](0,200)--(60,100)--(0,100)--(0,200);
\fill[green!50!gray, opacity=0.2](100,0)--(60,100)--(100,100)--(100,0);
        \addplot[thick,red,domain=0:0.58] {a(x)};
               \addplot[thick,red,domain=0.62:1] {a(x)};
        \addplot[thick,blue,domain=0:0.58] {l1(x)};
              \addplot[thick,blue, domain=0.62:1] {l2(x)};
%jumps
       \draw (40,100) node[inner sep=0pt] (j1) {};  
       \draw (40+200 -500/3,100)[inner sep=0pt]  node (j2) {}; 
       \draw (40+180-500/3,100)[inner sep=0pt]  node (j3) {}; 
       \draw (40+94 -500/3+100,100)[inner sep=0pt]  node (j4) {}; 
       \draw[->,black!40!green] (j1) to [bend left=80] (j2);
       \draw[->,black!40!green] (j1) to [bend left=80] (j3);
       \draw[->,black!40!green] (j1) to [bend left=80] (j4);
\draw[black!30!green,very thick] (40,100) -- (40+100/3,100) node[anchor=north]{};
%points
\filldraw[red] (60, 210) circle(1pt)node[red] {};
\filldraw [blue] (60,100) circle (1pt) node[blue] {};
        
\end{axis}[]
     % title
\node[align=center, yshift=-1em] (title) 
    at (current bounding box.south)
    {Figure \ref{figc3}};

\end{tikzpicture}
\end{figure}
      \end{minipage}

\caption{A representation of Type~1, where $\lambda(x)=1$ (in blue, colors online), $a$ is a polynomial of second degree vanishing on the boundaries (in red), and the support of $\nu(x,\fdot)$ is contained in $[-x,1-x]$ (in green).}\label{figc1}
\caption{A representation of Type~2, where $\lambda$ has a pole of order 1 in $x=0$ (in blue), $a$ is a polynomial of second degree vanishing on the boundaries (in red), and the support of $\nu(x,\fdot)$ is contained in $[-x,0]$ (in green) for all $x\in E$. This in particular implies that the distance to the ``no-jump'' point always decreases if a jump occurs. Note that in $x=0$ there is no jump activity since $\lambda(0)=0$ and thus $\nu(0,E)=0$.}\label{figc2}
\caption{A representation of Type~3, where $\lambda$ has a pole of order 2 in $x^*\in(0,1)$ (in blue), $a$ is a polynomial of second degree on $E\setminus \{x^*\}$ vanishing on the boundaries (in red), and the support of $\nu(x,\fdot)$ is contained in $[-2(x-x^*),0]$, resp.~$[0,-2(x-x^*)]$, (in green) for all $x\in E$. This in particular implies that the distance to the ``no-jump'' point $x^*$ always decreases if a jump occurs. Note that in $x^*$ there is no jump activity since $\lambda(x^*)=0$, but there is an extra contribution to the diffusion coefficient at this point.}\label{figc3}
\end{center}
\end{figure}

\section{Examples of polynomial operators on the unit interval} \label{S:examples}

In this section we present a number of examples that illustrate 
the diverse behavior of polynomial jump-diffusions on $[0,1]$. While the diffusion case is simple -- 
the Jacobi diffusions (Type~0) are the only possibilities -- 
the complexity increases significantly in the presence of jumps. 
For instance, in Example~\ref{ex2} we obtain jump intensities with a countable number of poles in the state space.

\subsection{Examples with affine jump sizes}

%jacobi + jumps
\begin{example}
We start with a well-known example of a polynomial jump-diffusion on $[0,1]$; see \citet[Example 3.5]{Cuchiero/etal:2012}. Consider the Jacobi process, which is the solution of the stochastic differential equation
$$
\de X_t =\kappa_0(\theta_0-X_t)\de t+\sigma\sqrt{X_t(1-X_t)}\de W_t,\qquad X_0 =x_0\in [0,1],
$$
where $\theta_0 \in [0,1]$ and $\kappa_0,\sigma > 0$. This process can also be regarded as the unique solution to the martingale problem for $(\Gcal,\delta_{x_0})$, with the Type~0 operator
$$\Gcal f (x):=\frac 1 2 \sigma^2x(1-x) f''(x)+\kappa_0(\theta_0-x)f'(x).$$
This example can be extended by
adding jumps, where the jump times correspond to those of a Poisson process with intensity $\lambda$ and the jump size is a function of the process level. One can for instance specify that if a jump occurs, then the process is reflected in~$1/2$. In this case the process would be the unique solution to the martingale problem for $(\Gcal,\delta_{x_0})$, where
$$\Gcal f (x):=\frac 1 2 \sigma^2x(1-x) f''(x)+\kappa_0(\theta_0-x)f'(x)+\lambda\big(f(1-x)-f(x)\big),$$
which is an operator of Type~1 with $A=\sigma^2$, $\kappa= \kappa_0+2\lambda$, $\theta=\frac{\kappa_0\theta_0+\lambda}{\kappa_0+2\lambda}$, and $\mu=\lambda\delta_{(1,1)}$.
\end{example}

\begin{example}
The following example features a simple state-dependent jump distribution. Consider a L\'evy type operator $\Gcal$ whose jump kernel $\nu(x,\de\xi)$ is chosen such that $x+\xi$ is uniformly distributed on $(\alpha(x),\beta(x))$, where $\alpha,\beta\in\Pol_1(E)$ and $0\leq \alpha(x)\leq\beta(x)\leq1$ for all $x\in E$.
This in particular implies that $\alpha$ and $\beta$ can be written as 
$$\alpha(x)=\alpha_0(1-x)+\alpha_1x\qquad\text{and}\qquad \beta(x)=\beta_0(1-x)+\beta_1x$$
for some $0\leq\alpha_0\leq\beta_0\leq1$ and $0\leq\alpha_1\leq\beta_1\leq1$.
Choosing the drift coefficient $b$ suitably, the operator $\Gcal$ is then of Type~1 for $\mu$ being the pushforward of the uniform distribution on $(0,1)$ under the map $z\mapsto (1-z(\beta_1-\alpha_1)-\alpha_1, z(\beta_0-\alpha_0)+\alpha_0)$.

The solution to the corresponding martingale problem is a  Jacobi process extended by adding jumps, where the jump times correspond to those of a Poisson process with unit intensity, and the jump's target point is uniformly distributed on $(\alpha(x),\beta(x))$, given that the process is located at $x$ immediately before the jump. 
\end{example}

%%%%%%sin example
\begin{example}
Polynomial operators are not always easy to recognize at first sight. Consider a L\'evy type operator $\Gcal$ whose diffusion and drift coefficients $a$ and $b$ are zero, and whose jump kernel $\nu(x,\de\xi)$ is given by
$$
\nu(x,A)=\mathds1_{\{x\neq0\}} \frac {1-x} x\int_0^1\mathds 1_{A\setminus\{0\}}(-x\sin^2((x+ z)\pi)) \de  z.
$$
Despite the presence of the sine function, the operator $\Gcal$ satisfies all the conditions of Lemma~\ref{thm4}. It is thus polynomial and its martingale problem is well-posed. In fact, this operator is of Type~2. Using the periodicity of the sine function, one can show that $\nu(x,\de\xi)$ has affine jump sizes with $\lambda(x)=\frac {1-x} x \mathds 1_{\{x\neq0\}}$, $\gamma(x,y)=-xy$, and
$\mu$ being the pushforward of Lebesgue measure on $[0,1]$ under the map $z\mapsto\sin^2(z\pi)$. The associated polynomial jump-diffusion is a martingale since $b=0$. Moreover, the arrival intensity $\nu(x,E-x)$ of the jumps is given by $\frac{1-x} {x}\mathds1_{\{x\neq0\}}$, which is unbounded around zero.
\end{example}
 %%%%%%dunkl example
\begin{example}
The Dunkl process with parameter ${n}\in\N_0$ is a polynomial jump-diffusion on $\R$, see e.g.~\citet[Example 3.7]{Cuchiero/etal:2012}, and can be characterized as the unique martingale whose absolute value is the Bessel process of dimension $1+2{n}$; see \cite{Gallardo:2006}. The corresponding polynomial operator $\Gcal^{\rm Dunkl}$ is of L\'evy type with diffusion and jump coefficients $a(x)=2+2{n}\mathds1_{\{x=0\}}$ and $b(x)=0$, and jump kernel
$$\nu(x,\de\xi)=\mathds1_{\{x\neq0\}}\frac {n} {2x^2}\delta_{-2x}(\de\xi).$$
 The arrival intensity of its jumps is thus given by $\nu(x,\R)=\frac {n}{2x^2}\mathds1_{\{x\neq0\}}$, which is a rational function with a pole of second order in $x=0$. 

Observe that $\nu(x,\de\xi)$ exhibits several similarities with jump kernels of operators of Type~3, such as the form of the arrival intensity of the jumps, and the extra contribution to the diffusion coefficient at the ``no-jump'' point $x=0$. In fact, defining $\tilde f\!:=\!f(\cdot+\frac12)$ and
$$
\Gcal f(x)=x(1-x)\Gcal^{\rm Dunkl} \tilde f(x-1/2),
$$
we obtain a polynomial operator of Type~3 with ``no-jump'' point $x^*=1/2$.
\end{example}

\subsection{Constructions using conic combinations}

We provide two examples illustrating the usefulness of Theorem~\ref{lem3} for combining operators with affine jump sizes to achieve specifications with interesting properties.

%example  infinite poles

\begin{example}\label{ex2}
We now construct a polynomial operator whose martingale problem is well-posed, such that the arrival intensity of the jumps is unbounded around infinitely many points, but finite for all $x\neq1/2$.

Let $\Gcal_n$, $n\geq3$, be operators of Type~3 with ``no-jump'' points $x_n^*=\frac 1 2+ \frac 1 n$. Let their diffusion coefficients be given by 
$$a_n(x)=\frac 1{3n^2}x_n^*(1-x_n^*)\mathds 1_{\{x=x_n^*\}},$$
the drift coefficients be 0, and the parameters of the jump kernels $\nu_n(x,\de\xi)$ be given by
$$\lambda_n(x)=n^{-2}\frac{x(1-x)}{(x-x_n^*)^2}\mathds1_{\{x\neq x_n^*\}},\quad \gamma_n(x,y)=-y(x-x_n^*),$$
and $\mu$ be Lebesgue measure on $[0,1]$. Note that for all $k\geq2$ we have
\begin{equation}\label{eqn57}
\sum_{n=3}^\infty\left( a_n(x)\delta_{k2} +\int\xi^k \nu_n(x,\de\xi)\right)=\frac{x(1-x)}{k+1}\sum_{n=3}^\infty n^{-2}(x^*_n-x)^{k-2}<\infty.
\end{equation}
By Theorem~\ref{lem3} and Lemma~\ref{lem7} this implies that the operator $\Gcal:=\sum_{n=3}^\infty \Gcal_n$ is again polynomial  and its martingale problem is well-posed. In particular, $\Gcal$ is a L\'evy type operator with coefficients $a(x)=\sum_{n=3}^\infty a_n(x)$ and $b(x)=0$, and jump kernel
$\nu(x,\fdot):=\sum_{n=3}^\infty \nu_n(x,\fdot)$. As a result, the arrival intensity of the jumps is given by
$$\nu\big(x,E-x\big)=\sum_{n=3}^\infty \lambda_n(x)=x(1-x)\sum_{n=3}^\infty\frac 1 {n^2(x-x^*_n)^2}\mathds1_{\{x\neq x^*_n\}},$$
which is unbounded around each $x_n^*$ but finite for all $x\neq1/2$. At $x=1/2$ the jump intensity is infinite. Figure~\ref{fig2} contains an illustration.
\end{example}

\begin{figure}[h!]
\begin{center}
\includegraphics[scale=0.6]{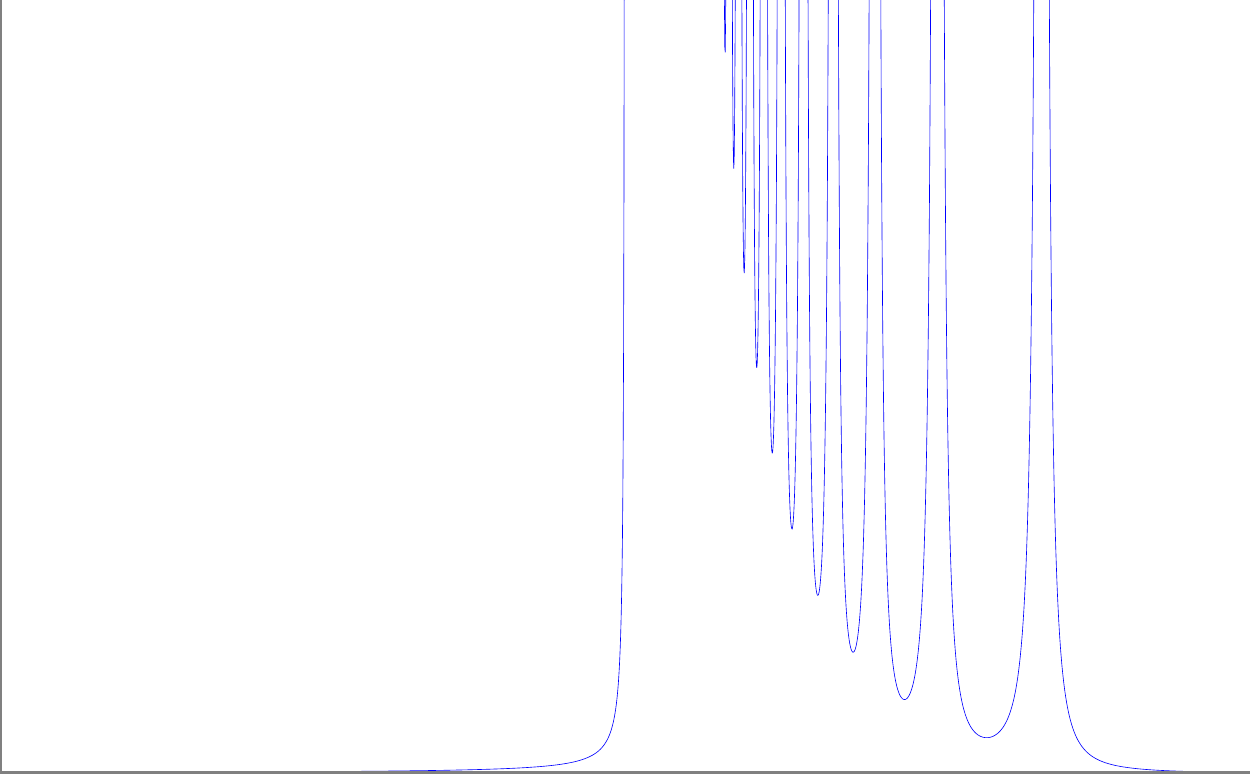}
\end{center}
\caption{A graphical representation of arrival intensity of the jumps $\nu(x, E-x)$ appearing in Example~\ref{ex2}.}\label{fig2}
\end{figure}

%example diffusion out of jumps I
\begin{example}
This example shows that the operator of a polynomial diffusion, or equivalently an operator of Type~0, can always be written as the limit of ``pure jump'' polynomial operators, i.e.~with zero diffusion coefficients. Consider the Jacobi diffusion with operator $\Gcal$ given by
$$\Gcal f(x):=Ax(1-x)f''(x)+\kappa(\theta-x)f'(x),$$
for some $A\in\R_+$, $\kappa\in\R_+$, and $\theta\in[0,1]$. Let then $\Gcal_n$ be an operator of Type~2 and suppose that its diffusion coefficient $a_n$ is zero, the drift coefficient is given by $b_n(x)=\kappa(\theta-x)$, and the parameters of the jump kernel $\nu_n(x,\de\xi)$ are
$$\lambda_n(x)=n^2\frac{A(1-x)}{x}\mathds1_{\{x\neq0\}},\quad \gamma_n(x,y)=-yx,\quad\mu=\delta_{1/n}.$$
Observe that, trivially, we have $\lim_{n\to\infty}b_n(x)=\kappa(\theta-x)$. Also,
\begin{gather*}
\lim_{n\to\infty}\left(a_n(x)+\int\xi^2\nu_n(x,\de\xi)\right)=Ax(1-x) \quad\text{and}\quad \lim_{n\to\infty}\int\xi^k\nu_n(x,\de\xi)=0, \quad k\ge 3.
\end{gather*}
By Lemma~\ref{lem7} we thus conclude that $\Gcal= \lim_{n\to\infty}\Gcal_n$ in sense of Theorem~\ref{lem3}.
\end{example}

\subsection{Mixed affine jump sizes}
Consider now a L\'evy type polynomial operator $\Gcal$ whose jump kernel has mixed affine jump sizes
in the sense of Section \ref{sec3}, i.e.,
\begin{equation}\label{eqn56}
\nu(x,\de\xi)=\sum_{\ell=1}^L\nu_\ell(x,\de\xi),
\end{equation}
where each kernel $\nu_\ell(x,\de\xi)$ has affine jump sizes. Suppose the martingale problem for $\Gcal$ is well-posed, or equivalently, its triplet satisfies the conditions of Lemma~\ref{thm4}. A natural question is now whether the individual kernels $\nu_\ell(x,\de\xi)$ also satisfy the conditions of Lemma~\ref{thm4}. If this were to be true, it would have the pleasant consequence that $\Gcal$ could be represented as a sum of operators of Types~0--4. 
Unfortunately this is not the case, which we illustrate in Example~\ref{ooc} below. In fact, there exist kernels of the form~\eqref{eqn56} that cannot even be obtained as an infinite conic combination of the kernels appearing in Types~0--4.

%%% example out of the choir
\begin{example}\label{ooc}
Consider a L\'evy type operator $\Gcal$, whose coefficients are given by $a(x)=0$, $b(x)=1-2x$, and whose jump kernel is given by (\ref{eqn56}) for $L=2$, where $\nu_1(x,\de\xi)$ and $\nu_2(x,\de\xi)$ have affine jump sizes with parameters $\lambda_1(x)=\frac{1}{x(x+1)}\mathds1_{\{x\neq0\}}$, $\mu_1=\delta_{(1,0)}$, and  $\lambda_2(x)=\frac{2}{(1-x)(x+1)}\mathds1_{\{x\neq1\}}$, $\mu_2=\delta_{(0,1/2)}$. 
Observe that 
$$\gamma(x,y)=-x\quad\mu_1\text{-a.s.\qquad and\qquad }  \gamma(x,y)=\frac 1 2(1-x)\quad\mu_2\text{-a.s.}$$
One can verify that $\Gcal$ satisfies all the conditions of Lemma~\ref{thm4}, and is thus polynomial and its martingale problem is well posed.

Assume now for contradiction that $\nu(x,\de\xi)=\sum_{\ell=1}^\infty\tilde\nu_\ell(x,\de\xi)$ for some kernels $\tilde\nu_\ell(x,\de\xi)$ that satisfy the conditions of Lemma~\ref{thm4} for some coefficients $a_\ell(x)$ and $b_\ell(x)$. By Theorem~\ref{thm1}, each $\tilde\nu_\ell(x,\de\xi)$ then follows one of Types~0--4. Let $\tilde\lambda_\ell(x)$ and $\tilde\mu_\ell(x)$ be the parameters of the jump kernel $\tilde\nu_\ell(x,\de\xi)$.

Since $\supp\,\nu(x,\fdot)\subseteq\{-x,(1-x)/2\}$, we also have $\supp\, \tilde\nu_\ell(x,\fdot) \subseteq\{-x,(1-x)/2\}$ for all $x\in E$, or equivalently,
\begin{equation}\label{eqn15}
\tilde\mu_\ell=\alpha_\ell\ \delta_{(1,0)}+\beta_\ell\ \delta_{(0,1/2)}
\end{equation}
for some $\alpha_\ell,\beta_\ell\geq0$. This already excludes that $\tilde\nu_\ell(x,\de\xi)$ is of Type~3 or~4, and gives us that for all $x\in(0,1)$
$$\tilde\lambda_\ell(x)=\begin{cases}
\frac{q^\alpha_{\ell}(x)}{x}&\text{if $\beta_\ell=0$,}\\
\frac{q^\beta_{\ell}(x)}{1-x}&\text{if $\alpha_\ell=0$,}\\
c_\ell &\text{otherwise,}
\end{cases}$$
for some $q^\alpha_{\ell}, q^\beta_{\ell}\in\Pol_1(E)$ and $c_\ell\in\R_+$.
In particular note that for all $x\in (0,1)$ and $\ell\in\N$,
\begin{equation}\label{eqn17}
\alpha_\ell\tilde\lambda_\ell(x)=\alpha_\ell\frac{q^\alpha_{\ell}(x)}{x}\qquad\textup{and}\qquad\beta_\ell\tilde\lambda_\ell(x)=\beta_\ell\frac{q^\beta_{\ell}(x)}{1-x}
\end{equation}
and hence, since $\Pol_1(E)$ is closed under pointwise convergence,
\begin{equation*}
\sum_{\ell=1}^{\infty}\int\xi^n\tilde\nu_\ell(x,\de\xi)=\frac{q^\alpha(x)}x(-x)^n+\frac{q^\beta(x)}{1-x}\bigg(\frac{1-x}2\bigg)^{n},
\end{equation*}
for all $n\in\N$ and some $q^\alpha,q^\beta\in\Pol_1(E)$.
Since $\int\xi^n\nu(x,\de\xi)=\sum_{\ell=1}^{\infty}\int\xi^n\tilde\nu_\ell(x,\de\xi)$ by assumption, we obtain
$$
 \frac{-(-x)^{n-1}+((1-x)/2)^{n-1}}{x+1}=-q^\alpha(x)(-x)^{n-1}+\frac 1 2 q^\beta(x) \bigg(\frac{1-x}2\bigg)^{n-1},
$$
for all $x\in (0,1)$, $n\in\N$. The shortest way to see that this condition cannot be satisfied is to use that if two polynomials coincide on $(0,1)$ they have to coincide on $\R$, too. But, choosing $x=-1$ we obtain
$$-q^\alpha(-1)+\frac 1 2q^\beta(-1)=\frac{n-1} 2$$
for all $n\in\N$, which is clearly not possible.
\end{example}

%%finite dimentional support

\begin{example}\label{ex1}
It is possible to show that operators with jump kernels of the form \eqref{eqn56} can have intensities $\lambda_\ell$ with multiple poles of multiple order outside the state space. On the other hand, under some non-degeneracy conditions, they can only have a single pole of order at most 2 inside the state space. 
We develop this idea in more detail for the case when $\nu(x,\fdot)$ consists of finitely many atoms for all $x\in E$. 

Let $\Gcal:\Pol(E)\to C(E)$ be an operator of the form described in Lemma~\ref{thm4} and suppose that its jump kernel $\nu(x,\de\xi)$ is supported on $\{\gamma_1(x),\ldots,\gamma_L(x)\}$, where $\gamma_\ell\in\Pol_1(E)$, $\ell=1,\ldots,L$, are pairwise distinct polynomials with $x+\gamma_\ell(x)\in E$ for all $x\in E$. As a result, we have
\begin{equation}\label{eqn49}
\nu(x,\de\xi)=\sum_{\ell=1}^L\lambda_\ell(x)\delta_{\gamma_\ell(x)}(\de\xi)
\end{equation}
for some functions $\lambda_\ell:E\to\R_+$. For $n\geq2$, set $r_n:=\int\xi^n\nu(\fdot,\de\xi)=\sum_{\ell=1}^L\lambda_\ell\gamma_\ell^n$,
and recall that $r_n\in\Pol_n(E)$ for all $n\geq3$ and that $r_2$ is bounded on $E$.
Using the nonnegativity of $\lambda$ and the boundary conditions for $a$, one can then establish the following properties, which we state here without proof.
\begin{enumerate}
\item If $\lambda_\ell$ has a pole at a point $x_0\in E$, then $\gamma_\ell(x_0)=0$. Moreover in this case, analogously to Types 2 and 3, if $x_0\in\{0,1\}$, the order of the pole is 1 and if $x_0\in(0,1)$, the order of the pole is 2. Note that nonnegativity of $\lambda_\ell$ and the fact that $\gamma_\ell\in\Pol_1(E)$ imply that $\lambda_\ell$ can have a pole in at most  one point of the state space.
\item $r_2\in\Pol_2(E\setminus\{x_1^*,\ldots,x^*_L\})$, where $x^*_\ell$ denotes the zero of $\gamma_\ell$, and we have
\begin{equation}\label{eqn31}
\lambda_\ell=\frac {q_\ell} {\gamma_\ell^2\prod_{j\neq \ell}(\gamma_\ell-\gamma_j)}\mathds1_{\{\gamma_\ell\neq0\}},
\end{equation}
where $q_\ell\in\Pol_{L+1}(E)$ and on $E\setminus\{x^*_1,\ldots,x_L^*\}$ it is given by
$$q_\ell=\sum_{k=0}^{L-1}\bigg((-1)^{k} r_{L-k+1}\sum_{\substack{\ell_1<\ldots<\ell_{k}\\ \ell_1,\ldots,\ell_k\neq \ell}}\gamma_{\ell_1}\cdots\gamma_{\ell_{k}}\bigg).$$
\item Since $a+\int\xi^2\nu(\fdot,\de\xi)\in\Pol_2(E)$ by Lemma~\ref{thm4}, we can conclude that
\begin{equation}\label{eqn50}
a(x)=Ax(1-x)+\sum_{\ell=1}^L\bigg(\frac {(-1)^{L-1}q_\ell(x)} {\prod_{j\neq \ell}\gamma_j(x)}\mathds1_{\{x=x^*_\ell\}}\bigg),
\end{equation}
for some $A\in\R_+$ and all $x\in E$.
\end{enumerate}

Conversely, fix a sequence of polynomials $r_{k+2}$, $k=0,\ldots,L-1$, such that $r_{k+2}\in\Pol_{k+2}(E)$ for all $k$. If for some affine functions $\gamma_1,\ldots,\gamma_L$ as above, the functions $\lambda_\ell$ given by equation \eqref{eqn31} satisfy (i) and are all nonnegative on $E$, one can conclude that for $\nu(x,\de\xi)$ as in \eqref{eqn49}, $a$ as in \eqref{eqn50}, and a suitably chosen $b\in\Pol_1(E)$, the corresponding L\'evy type operator is polynomial and its martingale problem is well-posed.
\end{example}

\begin{remark}
It is interesting to observe that Shur polynomials appear naturally in the context of Example \ref{ex1}. Indeed, by point (ii) we know that each $\lambda_\ell(x)$, and thus every moment $r_{n}(x)$ of the measures $\nu(x, \fdot)$, is uniquely determined by $\gamma_1,\ldots,\gamma_L$ and $r_2,\ldots,r_{L+1}$. More precisely for all $n>L+1$ we can write
$$r_n=\sum_{k=1}^L(-1)^{L-k} s_{\mu_{L,n,k}}(\gamma_1,\ldots,\gamma_L) r_{k+1},$$
where $\mu_{L,n,k}=(\mu_{L,n,k}^1,\ldots,\mu_{L,n,k}^L)$ is the partition given by
$$\mu_{L,n,k}^1=n-L-1,\quad \mu_{L,n,k}^2=\ldots= \mu_{L,n,k}^{L-k+1}=1,\quad\text{and}\quad\mu_{L,n,k}^{L-k+2}=\ldots=\mu_{L,n,k}^L=0,$$
and $s_{\mu_{L,n,k}}$ is the corresponding Shur polynomial.
\end{remark}

We now propose two interesting applications of Example~\ref{ex1}, showing that it can happen that $\lambda_1,\ldots,\lambda_L$ have poles of high order and in several points outside the state space.

%%%Example multiple poles
\begin{example}
Consider a kernel of the form described in (\ref{eqn49}) for
$$\gamma_1(x)=-x,\quad \gamma_2(x)=1-x, \quad\gamma_3(x)=\frac 1 3 ( 1  -2x),\quad\gamma_4(x)=\frac 2 3 ( 1  -2x).$$
Defining $\lambda_\ell$ through expression (\ref{eqn31}) where we set
$$r_2(x)=1,\quad \!r_3(x)=\frac{1-2x}2,\quad\! r_4(x)=\frac{2x^2-2x+5}{18},\quad \! r_5(x)=\frac{(2x-1)(5x^2-5x+1)}6,$$
we obtain
\begin{align*}
\lambda_1(x)&=\frac 9 2 \frac{(1-x)}{x(x+1)(2-x)},\qquad\lambda_2(x)=\frac 9 2 \frac{x}{(1-x)(x+1)(2-x)},\\
\lambda_3(x)&=\frac{9}{(x+1)(2-x)},\qquad \quad\ \lambda_4(x)=\frac 9 4 \frac{1}{(x+1)(2-x)},
\end{align*}
for all $x\in E$. Note that the rational functions $\lambda_\ell$ satisfy point (i) of Example~\ref{ex1} and are all nonnegative on $E$. As a result, choosing the diffusion and drift coefficients suitably, $\Gcal$ is a polynomial operator whose martingale problem is well-posed. Observe that each $\lambda_\ell$ has a pole in $x=-1$ and $x=2$.
\end{example}

%%% Example of high order
\begin{example}
Consider a kernel of the form described in (\ref{eqn49}) for
$$\gamma_1(x)=-x,\quad \gamma_2(x)=\frac 1 2 (1-x), \quad\gamma_3(x)=\frac 1 3 ( 1  -2x).$$
Defining $\lambda_\ell$ through expression (\ref{eqn31}) where we set
$$r_2(x)=1,\quad r_3(x)=\frac{1-2x}2,\quad r_4(x)=\frac{10x^2-9x+3}{12},$$
we obtain
$$\lambda_1(x)= \frac{1}{x(x+1)^2},\quad\lambda_2(x)= \frac{4(2x+1)}{(1-x)(x+1)^2},\quad\lambda_3(x)=\frac{27 x^2}{(1-2x)^2(x+1)^2},$$
for all $x\in E$.  Note that the rational functions $\lambda_\ell$ satisfy point (i) of Example~\ref{ex1} and are all nonnegative on $E$. As a result, choosing the diffusion and drift coefficients suitably, $\Gcal$ is a polynomial operator whose martingale problem is well-posed. Observe that each $\lambda_\ell$ has a pole of second order in $x=-1$.
\end{example}

%%%%%%%SIMPLEX
\section{The unit simplex}\label{sec6}

Throughout this section the state space $E\subset\R^d$ is the unit simplex of dimension $d-1$, which we denote by
$$
E:=\Delta^d=\left\{x \in \R_+^d \, : \, \sum_{i=1}^d x_i=1\right\}.
$$
Similarly as in Section~\ref{sec4} our goal is to provide a characterization of polynomial 
jump-diffusions on $E$ with affine jump sizes. 
Again, we combine Theorem~\ref{lem2} and Lemma~\ref{lem1} to specialize Theorem \ref{thm3} to the state space $E$. 
The proof is given in Section~\ref{app22}.

\begin{lemma}\label{thm6}
A linear operator $\Gcal:\Pol(E)\to C(E)$ is polynomial and its martingale problem is well-posed if and only if it is of form \eqref{eq:G LK} and the corresponding triplet $(a,b,\nu)$ satisfies
\begin{enumerate}
\item $a(x)\in\mathbb S_+^d$ for all $x\in E$ and $\nu(x,\de\xi)$ satisfies \eqref{thenu},
\item $a_{ii}(x)=0$ and $b_i(x)-\int\xi_i\nu(x,\de\xi)\geq0$ for all $x\in E\cap\{x_i=0\}$,
\item $a\mathbf 1=0$ and $b^\top\mathbf1=0$,
\item $b_i\in\Pol_1(E)$,
 $a_{ij}+\int\xi_i\xi_j\nu(\fdot,\de\xi)\in\Pol_2(E)$, and
$\int\xi^{\mathbf k}\nu(\fdot,\de\xi)\in\Pol_{|\mathbf k|}(E)$ for all $|\mathbf k|\geq3$.
\end{enumerate}
\end{lemma}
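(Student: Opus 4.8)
The plan is to deduce Lemma~\ref{thm6} from the general characterization results already established, namely Theorem~\ref{lem2} (well-posedness $\iff$ $\Gcal 1=0$ plus positive maximum principle), Theorem~\ref{thm3} (positive maximum principle plus $\Gcal 1 = 0$ $\implies$ L\'evy type), and Lemma~\ref{lem1} (characterization of which L\'evy type operators are polynomial). So the real content is to translate ``$\Gcal 1 = 0$ and positive maximum principle'' into the explicit conditions (i)--(iii) on the triplet $(a,b,\nu)$ when $E = \Delta^d$, and to observe that (iv) is exactly the restatement of Lemma~\ref{lem1} specialized to this $E$. This parallels Lemma~\ref{thm4}, where the same program was carried out for $E = [0,1]$; indeed I expect the proof to be written simultaneously with (or by direct analogy to) the proof of Lemma~\ref{thm4} in Section~\ref{app21}, and the present proof to be deferred to Section~\ref{app22} as announced.

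First I would handle the ``only if'' direction. Assume $\Gcal$ is polynomial with well-posed martingale problem. By Theorem~\ref{lem2}, $\Gcal 1 = 0$ and $\Gcal$ satisfies the positive maximum principle; by Theorem~\ref{thm3} it is of L\'evy type with some triplet $(a,b,\nu)$, and (i) together with $a(x) \in \S^d_+$ is part of that definition, while (iv) is Lemma~\ref{lem1}. It remains to extract (ii) and (iii). For (iii): the linear function $\1^\top x - 1$ vanishes on all of $E = \Delta^d$, hence so does $\Gcal(\1^\top(\fdot) - 1)$; computing the left side via~\eqref{eq:G LK} and using $\nu(x, (E-x)^c) = 0$ (so that $\1^\top \xi = 0$ for $\nu(x,\cdot)$-a.e.\ $\xi$, since $x+\xi \in E$) gives $b(x)^\top \1 = 0$. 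Similarly, applying $\Gcal$ to $(\1^\top(\fdot)-1)^2$, which also vanishes on $E$, and again exploiting $\1^\top\xi = 0$ $\nu(x,\cdot)$-a.e., yields $\1^\top a(x) \1 = 0$; combined with $a(x) \in \S^d_+$ this forces $a(x)\1 = 0$. For (ii): fix $i$ and $x_0 \in E \cap \{x_i = 0\}$. The function $f(x) = -x_i$ attains its maximum $0$ over $E$ at $x_0$, so the positive maximum principle gives $\Gcal f(x_0) \le 0$; expanding via~\eqref{eq:G LK} — noting $\nabla^2 f = 0$, $\nabla f = -e_i$, and $f(x_0 + \xi) - f(x_0) - \xi^\top\nabla f(x_0) = -\xi_i + \xi_i = 0$ — gives exactly $b_i(x_0) \le 0$; but one must be more careful to get $b_i(x_0) - \int \xi_i \nu(x_0, d\xi) \ge 0$ as stated, so instead I would apply the maximum principle to the polynomial $x_i$, which attains its minimum over $E$ at $x_0$: then $\Gcal(x_i)(x_0) \ge 0$, and $\Gcal(x_i)(x_0) = b_i(x_0) + \int(\xi_i - \xi_i)\,\nu(x_0,d\xi)\cdots$ — actually one needs to be careful since the jump term $\int (f(x_0+\xi) - f(x_0) - \xi^\top\nabla f(x_0))\nu(x_0,d\xi) = 0$ for the linear $f$, so this only gives $b_i(x_0) \ge 0$; the term $\int \xi_i\,\nu(x_0,d\xi)$ enters because on $\{x_i = 0\}$ one has $\xi_i \ge 0$ $\nu(x_0,\cdot)$-a.e.\ (as $x_0 + \xi \in E$ forces $\xi_i \ge 0$), so $\int \xi_i\,\nu(x_0,d\xi) \ge 0$ and finiteness needs justification; I would instead test with $f(x) = x_i \wedge$ a smooth bump, or argue via the boundary analysis as in Lemma~\ref{thm4}, to extract that $\int \xi_i \nu(x_0,d\xi) < \infty$ and $b_i(x_0) - \int\xi_i\nu(x_0,d\xi) \ge 0$. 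For $a_{ii}(x_0) = 0$: apply the positive maximum principle to $f = -x_i^2$ (max over $E$ at $x_0$, value $0$) — then $\Gcal f(x_0) \le 0$ expands to $-a_{ii}(x_0) + (\text{jump and drift terms that are } \le 0 \text{ on } \{x_i=0\}) \le 0$; combined with the opposite inequality from testing $x_i^2$ (min at $x_0$) and the sign of $a$, one concludes $a_{ii}(x_0) = 0$. The step of disentangling the drift and the jump integral on the boundary — i.e.\ showing $\int\xi_i\nu(x_0,d\xi) < \infty$ and getting the precise inequality $b_i - \int\xi_i\nu \ge 0$ rather than a muddled combined inequality — is the main obstacle, and I expect it to be handled by the same careful approximation/truncation argument used in the proof of Lemma~\ref{thm4}; this is presumably why both proofs live in the appendix.

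For the ``if'' direction, suppose $(a,b,\nu)$ satisfies (i)--(iv) and define $\Gcal$ by~\eqref{eq:G LK}. Condition (iv) together with Lemma~\ref{lem1} shows $\Gcal$ is polynomial (and (i) ensures the L\'evy type structure is well-defined). By Theorem~\ref{lem2} it then suffices to check $\Gcal 1 = 0$ — immediate from~\eqref{eq:G LK} — and the positive maximum principle. For the latter, take $f \in \Pol(E)$ and $x_0 \in E$ with $\sup_E f = f(x_0) \ge 0$. The jump term $\int(f(x_0+\xi) - f(x_0) - \xi^\top\nabla f(x_0))\nu(x_0,d\xi)$: since $x_0 + \xi \in E$ for $\nu(x_0,\cdot)$-a.e.\ $\xi$, we have $f(x_0+\xi) \le f(x_0)$; the linear correction $-\xi^\top\nabla f(x_0)$ is not sign-definite in general, but when $x_0$ is in the relative interior of $\Delta^d$ it integrates against the symmetric part; the subtle case is $x_0$ on a face $\{x_i = 0\}$, where one uses condition (ii) and $\nabla f(x_0)$ pointing ``inward''. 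The cleanest route, again mirroring Lemma~\ref{thm4}, is: decompose $x_0$ according to which face it lies on, use that $a(x_0)\1 = 0$ and $a_{ii}(x_0)=0$ on active faces to control the second-order term, and combine the first-order term with the jump correction using (ii) and the constraint $\1^\top b = 0$. I would present this verification compactly, referencing the analogous computation for the interval, since the simplex case is a coordinatewise version of it. Thus the only genuinely new bookkeeping relative to Lemma~\ref{thm4} is the multi-index/face structure of $\Delta^d$, and the overall proof is a routine-but-careful adaptation.
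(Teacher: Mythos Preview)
Your overall plan matches the paper's proof: reduce to Theorem~\ref{lem2}, Theorem~\ref{thm3}, and Lemma~\ref{lem1}; for the ``only if'' direction extract (ii) by reusing the approximating polynomials $g_n,h_n$ from the proof of Lemma~\ref{thm4} (composed with $x\mapsto x_i$), and extract (iii) by testing with $f^\Delta(x)=\mathbf 1^\top x-1$; for the ``if'' direction verify the positive maximum principle by a face/KKT analysis. Your derivation of $a\mathbf 1=0$ via $(\mathbf 1^\top x-1)^2$ and positive semidefiniteness is a valid variant; the paper instead tests with $(\,\cdot-x)^{e_j}f^\Delta$ to get $a_j(x)^\top\mathbf 1=0$ directly.

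The one place where you underestimate the work is the second-order term in the ``if'' direction. You write that one should ``use $a(x_0)\mathbf 1=0$ and $a_{ii}(x_0)=0$ on active faces to control the second-order term'' and call the whole thing a routine adaptation of the interval case. But in the interval case $a$ vanishes entirely at boundary points, so $\tr(a\nabla^2 f)=0$ there; on the simplex, at a face point $x_0$ the matrix $a(x_0)$ is typically nonzero, and the second-order KKT condition only gives $v^\top\nabla^2 f(x_0)v\le 0$ for $v$ with $v^\top\mathbf 1=0$ and $v_i=0$ for active $i$, not for all $v$. The paper bridges this by passing to the square root: since $a(x_0)\mathbf 1=0$ and $a(x_0)e_i=0$ for active $i$, the same holds for $\sqrt{a(x_0)}$, so every column of $\sqrt{a(x_0)}$ lies in the admissible subspace; hence $\sqrt{a(x_0)}\nabla^2 f(x_0)\sqrt{a(x_0)}$ is negative semidefinite and $\tr(a(x_0)\nabla^2 f(x_0))=\tr(\sqrt{a(x_0)}\nabla^2 f(x_0)\sqrt{a(x_0)})\le 0$. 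This step is the genuinely new ingredient relative to Lemma~\ref{thm4}, and your outline does not supply it.
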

Observe that conditions (i) and (iii) guarantee that $\Gcal$ is of L\'evy Type. 
This in particular ensures that the right-hand side of \eqref{eq:G LK} can be computed using an arbitrary representative.

\begin{remark}
Condition (ii) implies that $\int|\xi_i|\nu(x,\de\xi)<\infty$ for all $x\in E\cap\{x_i=0\}$. 
Analogously to the unit interval case, this give us some intuition 
about the behavior of the solution $X$ on the boundary segment $x\in E\cap\{x_i=0\}$. Indeed, even if the component orthogonal to the boundary of the purely discontinuous martingale part of $X$ is necessarily of finite variation, the other components do not need to satisfy this property. 
Moreover, since $a(x)\in\mathbb S_+^d$, condition (ii) also implies that $a_{ij}(x)=0$ for all $j\in\{1,\ldots,d\}$.
\end{remark}

We now focus on the setting of affine jump sizes in the sense of Definition~\ref{D:AJS}. We thus consider L\'evy type operators $\Gcal$ of the form
\begin{equation} \label{eqn51}
\begin{aligned}
\Gcal f(x) &= \frac{1}{2}\tr\left( a(x) \nabla^2 f(x) \right) + b(x)^\top \nabla f(x) \\
&\qquad + \lambda(x)\int \left( f(x + \gamma(x,y)) - f(x) - \gamma(x,y)^\top \nabla f(x) \right) \nu(x,\de\xi),\\
\end{aligned}
\end{equation}
where $\lambda$ is nonnegative and measurable, and $\gamma(x,y)$ is affine in $x$. In order to describe the form of the jump sizes, let us introduce the set $(\Delta^d)^d$ which is given by
$$
(\Delta^d)^d=\{y=(y^1,\ldots,y^d) \in \mathbb{R}_+^{d \times d} \, :\, y^i \in \Delta^d \textrm{ for all } i \in \{1,\ldots,d\} \}.
$$
\paragraph{Type~0.} 
For some $\alpha_{ij}\in\R_+$, $\alpha_{ij}=\alpha_{ji}$, $B \in \mathbb{R}^{d\times d}$ such that $B_{ij}\geq0$ for $i\neq j$ and $B_{ii}=-\sum_{j\neq i}B_{ji}$, let
\begin{align}
a_{ii}(x)&=\sum_{i\neq j}\alpha_{ij}x_ix_j\quad\text{and}\quad a_{ij}(x)=-\alpha_{ij}x_ix_j \quad\textrm{ for all }  i\neq j,\label{eq:forma}\\
b(x)&=Bx,\label{eq:formb}
\end{align}
and set $\lambda=0$.
Then $\mathcal{G}$ is a polynomial operator whose martingale problem 
is well-posed. 
The solutions $X$ are multivariate Jacobi-type diffusion processes 
which have been characterized in this form by
\citet[Section 6.3]{Filipovic/Larsson:2016}. 
In the special case where $\alpha_{ij}=\sigma^2$ for all $i,j$, they correspond to Wright-Fisher diffusions, 
which are also known under the name multivariate Jacobi process; see \citet{GJ:06}.

\paragraph{Type~1.} 
Let $\lambda(x) = 1$ and $a(x), b(x)$ be given by~\eqref{eq:forma} and~\eqref{eq:formb}.
For all $y \in (\Delta^d)^d$ set
\begin{align}\label{eq:gammaspec}
\gamma(x,y)=\sum_{i=1}^d (y^i-e_i) x_i, 
\end{align}
and let $\mu$ be a nonzero measure on $(\Delta^d)^d$. 
If the boundary conditions
$$
B_{ij}-\int y^j_i \mu({\de y}) \geq 0
$$
hold for all $i\neq j$, then $\mathcal{G}$ is a polynomial operator whose martingale problem is well-posed.

Note that the boundary conditions imply that
$$\int|\xi|\nu(x,\de\xi)\leq \sum_{i=1}^d\int |y^i-e_i|\mu(\de y)$$
is bounded. 
Hence the resulting process behaves like a multivariate Jacobi-type diffusion process in the spirit of~\citet[Section 6.3]{Filipovic/Larsson:2016}, generalized to include summable jumps. The arrival intensity of the jumps is $\nu(x,E-x)=\mu(\{y: \gamma(x,y)\ne0\})$, which may or may not be finite.

\paragraph{Type~2.} 
Fix $i \in \{1, \ldots, d \}$. Let $a(x), b(x)$ be given by~\eqref{eq:forma} and~\eqref{eq:formb}, and let $\lambda(x) = \frac{q_1(x)}{x_i}\mathds1_{\{ x_i\neq 0\}}$ for some nonnegative $q_1 \in \Pol_1(E)$ such that $\lambda$ is not constant on $E\cap\{x_i\neq0\}$.
Furthermore, for $y \in \Delta^d$ we define
\[
\gamma(x,y)=(y-e_i)x_i,
\]
and let $\mu$ be a nonzero square-integrable measure on $\Delta^d\setminus\{e_i\}$.
If the boundary conditions
\begin{equation} \label{eq:simplex 2 bdry}
B_{kj}-q_1(e_j)\int y_k \mu({\de y})  \geq 0
\end{equation}
hold for all $k\neq i$ and $j\neq k$, then $\mathcal{G}$ is a polynomial operator whose martingale problem is well-posed.

If $q_1(x)=Lx_k$ for some $k\neq i$ and $L>0$, the jumps need not to be summable. More precisely, we can have $\int |y_i-1| \mu(dy)=\int |y_k| \mu(dy) =\infty$.
Otherwise, if $q_1(x)$ is not proportional to $x_k$ on $E$ for any $k$, the expression
$\int|\xi|\nu(x,\de\xi)$
is bounded, and the solution $X$  to the martingale problem for $\Gcal$ has thus summable jumps.  Indeed, 
$$\int|\xi_k|\nu(x,\de\xi)\leq \sup_{x\in E}q_1(x)\int y_k\mu(\de y),$$
which is bounded due to~\eqref{eq:simplex 2 bdry} and the existence of some $x\in E\cap\{x_k=0\}\cap\{x_i\neq0\}$ such that $q_1(x)\neq0$; see Lemma \ref{clem1} in Section \ref{app22} for more details on the second point.

The arrival intensity of the jumps is $\nu(x,E-x)=\lambda(x)\mu( \Delta^d \setminus \{e_i\})$ and hence, even if $\mu$ is a finite measure, the jump intensity is unbounded around $x_i=0$. Moreover, due to the form of $\gamma(x,y)$, $X$ can only jump in the direction of the boundary segment  $E\cap\{x_i=0\}$, and since $\nu(x,E)=0$ whenever $x_i=0$, $X$ cannot leave this boundary segment by means of a jump. Figure~\ref{figcc2} illustrates the form of $\lambda$ and the support of $\gamma(x,y)$ under $\mu$.

\paragraph{Type~3.} 
Let $i,j \in\{1,\ldots, d\}$ be such that $i\neq j$, and fix some constant $c >0$.
Consider the hyperplane $\{cx_i=x_j\}$ which will be a ``no-jump'' region. Let $b$ be given by~\eqref{eq:formb} and set
\[
\lambda(x)=\frac{q_2(x)}{(-cx_i+x_j)^2}\mathds1_{\{cx_i\neq x_j\}}
\] 
for some $q_2 \in \Pol_2(E)$ given by $q_2(x)=\sum_{k=1}^d(q_{ik}x_ix_k+q_{jk}x_jx_k)$, 
where $q_{ k\ell}\in\R$ are chosen such that $\lambda$ is nonnegative, and nonconstant on $\{cx_i\neq x_j\}$. Furthermore, define 
\[
\gamma(x,y)=y(-cx_i+x_j)(e_i-e_j)
\] 
and let $\mu$ be a nonzero square-integrable measure on $\big(0,\frac{1}{c} \wedge 1\big]$. Finally, let 
\[
a(x)=a^c(x)+a^\nu(x)A^\nu\mathds 1_{\{cx_i=x_j\}}
\] 
where $a^c$ is of form~\eqref{eq:forma}, $A^\nu\in\R^{d\times d}$ is a symmetric matrix given by $A^\nu_{ii}=A^\nu_{jj}=1$, $A^\nu_{ij}=-1$, and $A^\nu_{k\ell}=0$ if $k\notin\{i,j\}$, and where
$$a^\nu(x)=q_2(x)\int y^2\mu(\de y).$$
If  the boundary conditions
\begin{equation}\label{eq:driftpos}
\sum_{\ell\neq k} B_{k\ell}x_\ell-\frac{q_2(x)}{(-cx_i+x_j)}\mathds1_{\{ cx_i\neq x_j\}}\int y(\delta_{ik}-\delta_{jk}) \mu({\de y})  \geq 0
\end{equation}
are satisfied for all  $x \in E \cap\{x_k=0\}$ and $k\in\{1,\ldots,d\}$, then $\Gcal$ is a polynomial operator whose martingale problem is well-posed. Note in particular that for $k\notin\{i,j\}$, the condition (\ref{eq:driftpos}) coincides with $b_k\geq0$ on $E\cap\{x_k=0\}$.

If the numerator of $\lambda$ is of the form $q_2(x)= 2q_{ij}x_ix_j$ for some $q_{ij}\in\R_+$, the solution $X$ to the martingale problem for $\Gcal$ may have nonsummable jumps.  If $q_2(x)$ is not of this form, then by similar reasoning as for Type~2, the boundary conditions imply that $\int y\mu({\de y}) < \infty$ and thus $X$ has summable jumps.
The arrival intensity of the jumps is
$$\nu(x,E-x)=\lambda(x)\mu\left(\left(0,\frac{1}{c} \wedge 1\right]\right).$$
 As a result, even if $\mu$ is a finite measure, the jump intensity has a singularity of order two along $\{cx_i=x_j\}$, which results in a contribution of $a^\nu(x)A^\nu$ to the diffusion coefficient. Moreover, due to the form of $\gamma(x,y)$, the jumps of $X$ are always in the direction of the ``no-jump'' hyperplane $\{cx_i=x_j\}$. Although the jumps may overshoot $\{cx_i=x_j\}$, they always serve to reduce the distance to $ \{cx_i=x_j\}$. In particular, since $\nu(x,E-x)=0$ for all $x \in  E\cap\{cx_i=x_j\}$, $X$ cannot leave  $\{cx_i=x_j\}$ by means of a jump. Figure~\ref{figcc3} illustrates the form of $\lambda$ and the support of $\gamma(x,y)$ under $\mu$.\\

%Type 2 and Type3

%%%Figure
\begin{figure}[h!]
\begin{center}
\includegraphics[scale=1]{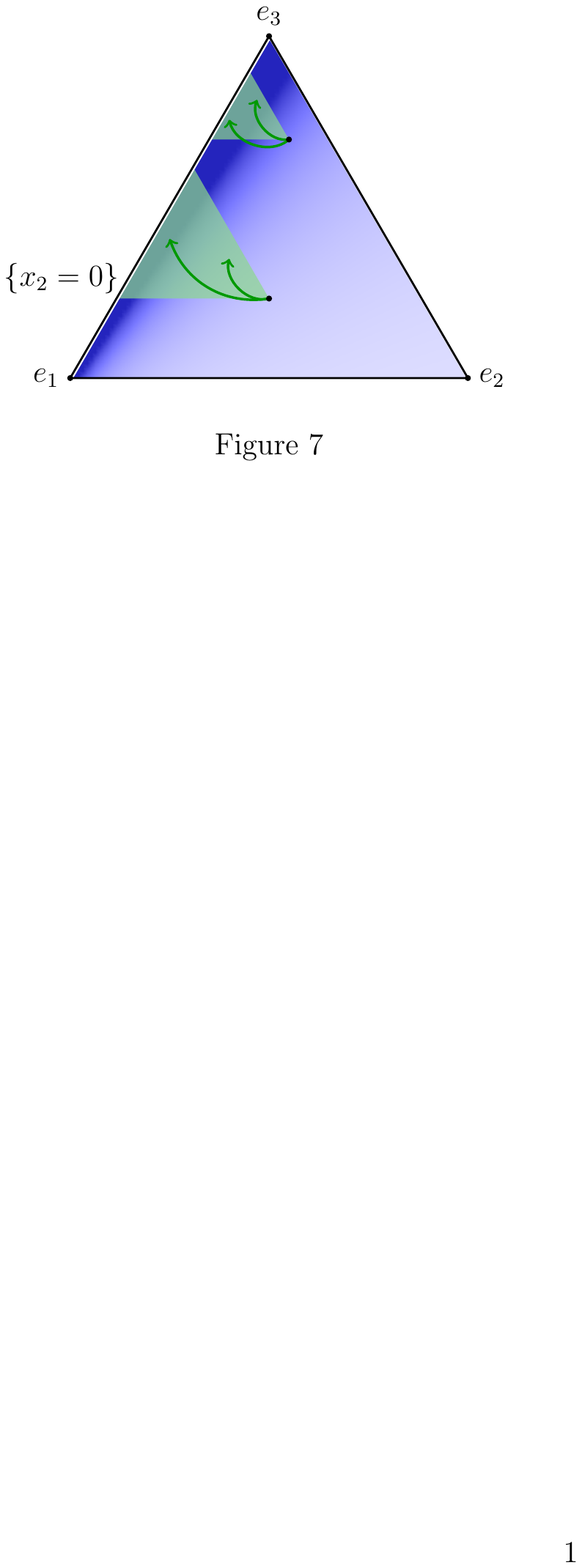}\qquad \includegraphics[scale=1]{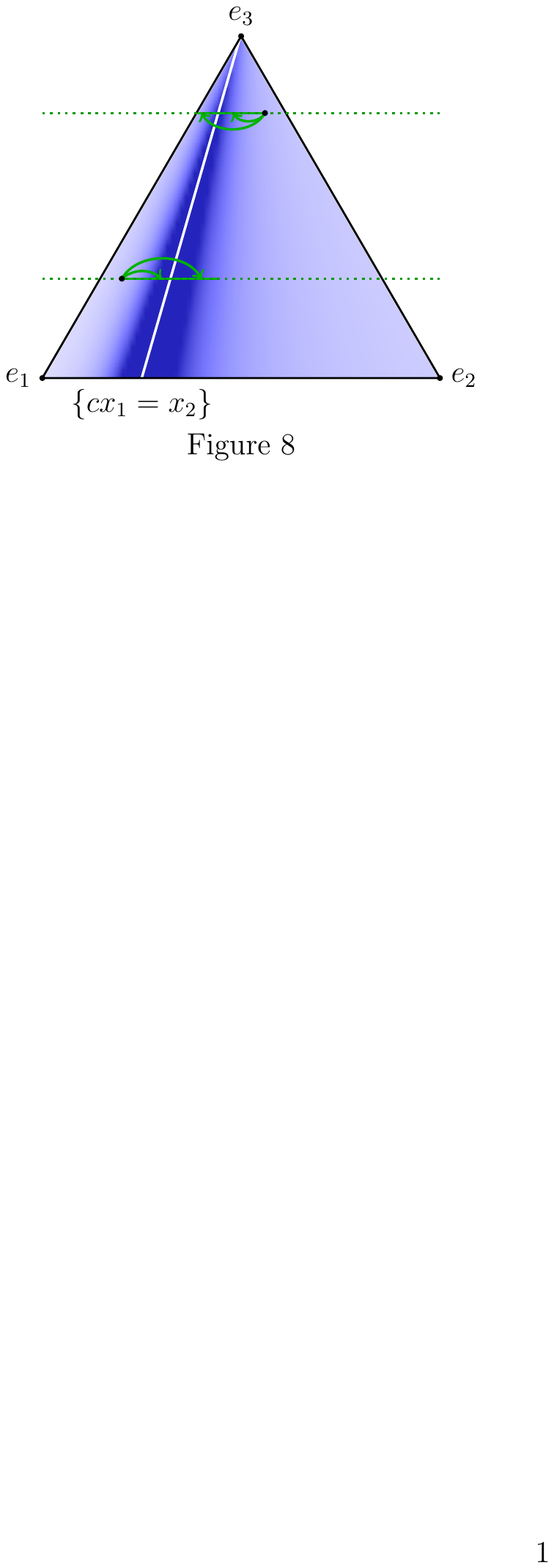}
\caption{A representation of Type~2, where $\lambda$ explodes on the boundary segment $\{x_2=0\}$ (in blue, colors online) and the support of $\nu(x,\fdot)$ is always contained in $\big(\Delta^3-e_2\big)x_2$ (in green) for all $x\in E$. This in particular implies that  the distance to the ``no-jump'' hyperplane $\{x_2=0\}$ always decreases if a jump occurs. Note that on $\{x_2=0\}$ there is no jump activity since $\nu(x,\de\xi)=0$ for all $x\in E\cap\{x_2=0\}$.}\label{figcc2}
\caption{A representation of Type~3, where $\lambda$ explodes on the hyperplane $\{c x_1=x_2\}$ (in blue) and the support of $\nu(x,\fdot)$ is always contained in $\Delta^3\cap\{x_3=0\}$ (in green) for all $x\in E$. Moreover the distance to the ``no-jump'' hyperplane $\{c x_1=x_2\}$ always decreases if a jump occurs. Note that on $\{c x_1=x_2\}$ there is no jump activity since $\nu(x,\de\xi)=0$ for all $x\in E\cap\{c x_1=x_2\}$, but  we know form the description of this type that there is an extra contribution to the diffusion matrix $a$.}\label{figcc3}
\end{center}
\end{figure}

In order to simplify the analysis, in particular in view of the arguments outlined in Remark \ref{rem:nonexistence},
we do not consider operators corresponding to Type~4 on the unit simplex. 
A condition on the jump kernel excluding this class is given by the following assumption.

\begin{assumption}\label{A}
The condition $\lambda\gamma_i(\fdot,y)^{3} \in \Pol_{3}(E)$ holds for all $i \in \{1,\ldots,d\}$ and all $y \in \supp(\mu)$.
%For every $i \in \{1,\ldots,d\}$,
%$$
%\lambda\gamma_i(\fdot,y)^{3} \in \Pol_{3}(E) \quad \textrm{for all } y \in \supp(\mu).
%$$
\end{assumption}

The polynomial property of $\Gcal$ implies that the integrated quantities $\lambda \int \gamma_i(\fdot,y)^3 \mu(dy)$ lie in $\Pol_{3}(E)$. Assumption \ref{A} strengthens this by requiring that the functions $\lambda\gamma_i(\fdot,y)^{3}$ themselves lie in $\Pol_{3}(E)$. This is a natural assumption, in particular in view of Types 1-3 on the unit interval.
Moreover, it will turn out in the course of the proof of Theorem~\ref{th:mainsimplex} that Assumption \ref{A} implies under the condition 
of affine jump sizes and nonconstant $\lambda$
that $$\gamma(\fdot,y)=H(y)P_1$$ where $H$ is a $\mu$-measurable function and $P_1 \in \Pol_1(E)$.
Analogous to the unit interval, the ``no-jump'' region is the intersection of $E$ with the hyperplane given by the zero set of a polynomial of first degree. The following theorem states the announced characterization of polynomial jump-diffusions with polynomial jump sizes under Assumption~\ref{A}. The proof is given in Section~\ref{app22}.

\begin{theorem}\label{th:mainsimplex}
Let $\Gcal$ be a polynomial operator whose martingale problem is well-posed. If the associated jump kernel has affine jump sizes and satisfies Assumption~\ref{A}, then $\Gcal$ necessarily belongs to one of the Types~0-3.
\end{theorem}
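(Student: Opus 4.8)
The plan is to extract the necessary restrictions from Lemma~\ref{thm6} and to show that, under affine jump sizes and Assumption~\ref{A}, the only admissible triplets $(a,b,\nu)$ are those listed in Types~0--3. Throughout we use that $\Pol(E)$ is isomorphic to the polynomial ring $\R[x_1,\dots,x_{d-1}]$ (substitute $x_d=1-x_1-\dots-x_{d-1}$), hence a unique factorization domain in which every nonconstant element of $\Pol_1(E)$ is irreducible. The argument then splits according to the jump kernel. If $\lambda\equiv0$, or more generally $\gamma(\cdot,y)=0$ for $\mu$-a.e.\ $y$, then $\nu\equiv0$ and Lemma~\ref{thm6} reduces to the parameter restrictions characterizing polynomial diffusions on $\Delta^d$; by \citet[Section~6.3]{Filipovic/Larsson:2016} these force $a$, $b$ to be of the form~\eqref{eq:forma}--\eqref{eq:formb}, i.e.\ Type~0. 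If $\lambda$ is a nonzero constant, absorb it into $\mu$ so that $\lambda\equiv1$. The constraint $\nu(x,(E-x)^c)=0$, together with a Fubini argument and continuity in $x$, gives $x+\gamma(x,y)\in\Delta^d$ for all $x\in\Delta^d$ and $\mu$-a.e.\ $y$, and since $\gamma(\cdot,y)$ is affine we may write $x+\gamma(x,y)=\sum_i x_i\bigl(e_i+\gamma(e_i,y)\bigr)$, whose values at the vertices show $y^i:=e_i+\gamma(e_i,y)\in\Delta^d$, i.e.\ $\gamma(x,y)=\sum_i(y^i-e_i)x_i$ as in~\eqref{eq:gammaspec}. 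The remaining conditions of Lemma~\ref{thm6} then pin down $a$, $b$ to~\eqref{eq:forma}--\eqref{eq:formb} and turn condition~(ii) into exactly the boundary inequalities $B_{ij}-\int y^j_i\mu(\de y)\ge0$; this is Type~1.

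The heart of the proof is the case of nonconstant $\lambda$, where the key step is to show that all jump directions are collinear: there exist $P_1\in\Pol_1(E)$ of degree exactly one and a $\mu$-measurable $H\colon\supp(\mu)\to\R^d$ with $\gamma(\cdot,y)=H(y)P_1$. By Assumption~\ref{A}, for $\mu$-a.e.\ $y$ and each $i$ with $\gamma_i(\cdot,y)\not\equiv0$ we have $\lambda\gamma_i(\cdot,y)^3\in\Pol_3(E)$, so off the hyperplane $\{\gamma_i(\cdot,y)=0\}$ the function $\lambda$ equals a rational function $p/\gamma_i(\cdot,y)^3$ with $\deg p\le3$. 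At least one of these directions has degree one (otherwise every nonzero direction is constant, and the invariance $x+\gamma(x,y)\in\Delta^d$ then forces $\gamma\equiv0$, a case already treated). Comparing two such representations, $p\,\gamma_j(\cdot,y')^3=p'\,\gamma_i(\cdot,y)^3$, and using irreducibility together with the degree bounds, one finds that $\gamma_j(\cdot,y')$ must be proportional to $\gamma_i(\cdot,y)$ whenever it is nonzero; otherwise $\gamma_i(\cdot,y)^3\mid p$, whence $\lambda$ is constant, contrary to assumption. This yields the factorization and shows, in addition, that $\lambda$ is on $\{P_1\ne0\}$ a rational function whose denominator is a power of $P_1$.

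Given the factorization, the invariance $x+H(y)P_1(x)\in\Delta^d$ for all $x\in\Delta^d$ and $\mu$-a.e.\ $y$ determines the geometry. Summing coordinates gives $\sum_iH_i(y)=0$, and evaluating the $i$-th coordinate at a vertex $e_j$ yields $\delta_{ij}+H_i(y)P_1(e_j)\ge0$; together with $\sum_iH_i(y)=0$ and $H\not\equiv0$ this forces $P_1$ to take a strictly positive value at exactly one vertex $e_{i_0}$ and a strictly negative value at at most one other vertex. In the first case, after normalization $P_1(x)=x_{i_0}$, $H(y)=y-e_{i_0}$ with $y\in\Delta^d$, and the pole of $\lambda$ lies on the facet $\{x_{i_0}=0\}$; a pole of order $\ge2$ makes $\int\xi_{i_0}^2\nu(\cdot,\de\xi)$ fail to extend continuously to this facet (it vanishes there but tends to a strictly positive limit just off it), forcing $a_{i_0i_0}$ strictly negative nearby and contradicting $a\in\S^d_+$, while order $0$ contradicts nonconstancy, so the pole has order exactly one and $\lambda=q_1(\cdot)/x_{i_0}$ with $q_1\in\Pol_1(E)$ nonnegative -- this is Type~2. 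In the second case $P_1(x)=x_{i_0}-cx_{j_0}$ with $c>0$, $H(y)$ is a nonnegative scalar multiple of $e_{j_0}-e_{i_0}$, and the pole lies along the interior hyperplane $\{x_{i_0}=cx_{j_0}\}$; here nonnegativity of $\lambda$ excludes order one (the residue would change sign across the hyperplane), the bound $\sup_x\int|\xi|^2\nu(x,\de\xi)<\infty$ excludes order $\ge3$, so the pole has order exactly two and $\lambda=q_2(\cdot)/(x_{i_0}-cx_{j_0})^2$; the discontinuity of $\int\xi_k\xi_\ell\nu(\cdot,\de\xi)$ across the hyperplane produces precisely the atomic term $a^\nu(x)A^\nu\mathds1_{\{x_{i_0}=cx_{j_0}\}}$ in $a$, and the vanishing of $a_{i_0i_0}$ and $a_{j_0j_0}$ on the respective facets forces $q_2$ to vanish on the codimension-two face $\{x_{i_0}=x_{j_0}=0\}$, giving it the stated form -- this is Type~3. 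In both cases $b(x)=Bx$ (from $b\in\Pol_1(E)$ and $b^\top\mathbf1=0$), and condition~(ii) of Lemma~\ref{thm6} translates into the stated boundary inequalities, completing the classification.

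The main obstacle is the collinearity step: one must carefully handle the ``$\mu$-a.e.'' quantifiers and the measurability of $H$, and rule out the two ways the argument can leak -- a jump direction that is a nonzero constant, and a pair of non-proportional directions -- each of which turns out to force $\lambda$ to be constant. A secondary difficulty is pinning down the exact order of the pole along the no-jump region and excluding the spurious combinations (order two on a facet, order one in the interior), which requires exploiting simultaneously nonnegativity of $\lambda$, positive semidefiniteness of $a$, the square-integrability bound on $\nu$, and the boundary conditions of Lemma~\ref{thm6}.
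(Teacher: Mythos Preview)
Your proposal is correct and follows essentially the same route as the paper's proof. The only presentational differences are that the paper packages the collinearity step as ``$\lambda=p/q$ with $p,q$ coprime, hence $q\mid\gamma_i(\cdot,y)^3$'' rather than your comparison of two rational representations, and it isolates the vertex analysis determining $P_1$ and $H$ as a separate lemma (Lemma~\ref{lem:formgamma}) instead of doing it inline; the pole-order arguments and the treatment of the diffusion matrix via Lemma~\ref{lem:forma} are the same.
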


\section{Applications} \label{S:appl}

In this section we outline two natural applications in finance of polynomial jump-diffusions on the unit simplex. The first application concerns stochastic portfolio theory, while the second application is in the area of default risk.

\subsection{Market weights with jumps in stochastic portfolio theory} \label{S:SPT_appl}

In the context of stochastic portfolio theory (SPT), polynomial diffusion models for the process of market weights have been found capable of matching certain empirically observed properties when calibrated to \emph{jump-cleaned} data; see~\cite{C:16, CGGPST:16}. This concerns the typical shape and dynamics of the capital distribution curves, but also features such as high volatility for low capitalized stocks. As mentioned in the introduction, a crucial deficiency of these models is the lack of jumps since they are present in typical market data; see Figure~\ref{fig:2}.

We now demonstrate how the results of Section~\ref{sec6} can be used to construct polynomial \emph{jump}-diffusion models for the market weights. We focus on a concrete specification that extends the \emph{volatility stabilized models} introduced by \cite{FK:05} by including jumps of Type 2. In the standard (diffusive) volatility stabilized model, the market weights follow a Wright-Fisher diffusion, which is a special case of Type 0 with parameters
\[
\alpha_{ij}=1 \quad \forall i\neq j \quad \text{and} \quad B=\frac{1+\beta}{2}\mathbf{1}\mathbf{1}^{\top}- \frac{d(1+\beta)}{2} \text{Id},
\]
for some $\beta \geq 0$. These models have two key properties which are of particular relevance in SPT: First, the market weights remain a.s.~in the relative interior of $\Delta^d$, denoted by $\mathring{\Delta}^d$. Second, the model allows for relative arbitrage opportunities.  We may preserve these features by adding jumps of Type 2. More precisely, we consider a model for the market weights $(X_t)_{t \geq 0}$  of the form
\begin{align*}
X_{t,i}&=\int_0^t \left(\frac{1+\beta}{2}-\frac{d(1+\beta)}{2}X_{s,i}\right)ds + \int_0^t \sqrt{X_{s,i}}(1-X_{s,i})dW_{s,i}-\sum_{i\neq j}X_{s,i}\sqrt{X_{s,i}}dW_{s,j}\\
 &\quad+\int_0^t \int \xi_i (\mu^{X}(d\xi,ds)-\nu(X_s,d\xi)ds),
\end{align*}
where $\mu^X$ denotes the integer-valued random measure associated to the jumps of $X$, and $W$ a $d$-dimensional standard Brownian motion. The jump specification is given as a sum of Type 2 jumps,
\[
\nu(x,A)= \sum_{i=1}^d \lambda_i(x) \int 1_A ((y-e_i)x_i) \mu_i(dy),
\]
where $\lambda_i(x) = \frac{q_i(x)}{x_i}\mathds1_{\{ x_i\neq 0\}}$ for some nonnegative $q_i \in \Pol_1(E)$ such that $\lambda$ is not constant on $E\cap\{x_i\neq0\}$, and the measures $\mu_i$ are supported on $\Delta^d\setminus \{e_i\}$ and satisfy $\int |y|\mu_i(dy)<\infty$. Economically, this specification means that downward jumps occur with higher and higher intensity the closer the assets are to $0$, and can therefore be used to model downward spirals in stock prices. We require that for all $j\ne k$,
\[
\frac{\beta}{2} - \sum_{i\ne k}q_i(e_j)\int y_k\mu_i(dy) + q_k(e_j) \int \left( 1+ \log(y_k) - y_k\right) \mu_k(dy) > 0,
\]
which ensures that $X$ remains in the relative interior $\mathring{\Delta}^d$. This can be proved similarly as in \citet[Theorem~5.7]{Filipovic/Larsson:2016}. Furthermore, this model admits relative arbitrage opportunities. To see this, we argue that no equivalent probability measure can turn $X$ into a martingale. Indeed, Lemma~5.6 in \citet{C:16} implies that, under any martingale measure, $X$ must reach the relative boundary of $\Delta^d$ with positive probability on any time horizon, contradicting equivalence. Since no equivalent martingale measure exists for the market weights, the model admits relative arbitrage.

Clearly any other polynomial diffusion model on the simplex can be enhanced by jumps of this form, which yields a large class of tractable jump-diffusion models applicable in the realm of SPT.

\subsection{Valuation of defaultable zero--coupon bonds} \label{S:Def_appl}

Polynomial jump-diffusions on the unit interval can be brought to bear on default risk modeling. We consider the stochastic recovery rate framework of \cite{Jarrow1998} and \cite{KT:17}. For further references, see also \cite{fi:13}, \cite{zh:06}, and \citet[Chapter~7]{jb:09} for an overview, as well as \cite{du:04} for the classical approach using affine processes. Note also that polynomial diffusion models for credit risk have appeared in \cite{Ackerer/Filipovic:2016}.

Let $(\Omega,\Fcal,\F,\Q)$ with $\F=(\Fcal_t)_{t\geq0}$ be a filtered probability space satisfying the usual conditions. Here $\Q$ is a risk neutral  measure. Let $B = (B_t)_{t\geq0}$ be the value of the risk-free bank account with initial value of one monetary unit. For any $t\le T$, we denote by $\widetilde P(t,T)$
the price at time $t$ of a defaultable zero--coupon bond with maturity $T \geq 0$ and unit notional. Due to default risk, its actual payoff $\widetilde P (T,T)$ at maturity is random and lies between zero and one. Under the premise that all discounted defaultable zero--coupon bond prices $\widetilde P(t,T)/B(t)$ are true martingales under $\Q$, we get
$$\widetilde P(t,T)=\E_\Q\Big[\frac{B_t}{B_T} S_T\Big| \Fcal_t\Big],$$
where $S_t:=\widetilde P(t,t)$ is known as the {\em recovery rate}. Suppose now for simplicity that $B$ and $S$ are conditionally independent under $\Q$. Then
$$\widetilde P(t,T)=\E_\Q\bigg[\frac{B_t}{B_T}\bigg| \Fcal_t\bigg]\E_\Q[ S_T| \Fcal_t]=P(t,T)\E_\Q[ S_T| \Fcal_t],$$
where $P(t,T)$ is the price of a non-defaultable zero--coupon bond with maturity $T \geq 0$ and unit notional.

Motivated by the typically long and complicated unwinding process after a default occurs, \citet{KT:17} drop the assumption that the recovery rate $S$ is known when default happens. Excursions of $S$ below 1 are interpreted as liquidity squeezes resulting in a delay of due payments, which may or may not turn into a default. In this framework, the risk-neutral recovery rate $S$ typically starts with a constant trajectory at level 1.
Once the recovery has jumped below 1, it pursues an unsteady course. Downward moves of the recovery rate are self-exciting, as deterioration of the counterparty's credit quality typically makes full recovery more unlikely. Nonetheless, $S$ may return to 1 and remain there for some period of time.

A polynomial model for the recovery rate $S$ can be constructed as follows. Let $X$ be a polynomial jump-diffusion of Type 2 with ``no-jump'' point $x^*=0$. Assume that  $\kappa(1-\theta)=(1+q) \int y \mu(\de y)$; this condition guarantees that if $X$ reaches level 1, it can leave it only by means of a jump. More precisely, $X$ persists at level 1 until its first jump, which occurs according to an $(1+q)$--exponentially distributed stopping time and a downward $\mu$-distributed jump size.
Moreover, since the jump intensity is the positive branch of a hyperbola with a pole in zero, downward jumps of $X$ get more and more likely as the process approaches zero.

In view of the discussion above, polynomial transformations $S:=p(X)$ of $X$, where $p\in\Pol([0,1])$ is increasing and satisfies $p([0,1])\subseteq[0,1]$, are well-suited to describe the recovery rate. The polynomial property of $X$ permits to express the forward recovery rate $F(t,T)=\E_\Q[S_T\mid\Fcal_t]$ in closed form. We provide two concrete specifications, by choosing $p(x)=x$ and $p(x)=x^2$. In the first case, $S=X$, the moment formula \eqref{eq:momentformula} yields
$${F(t,T)}=\big(1-e^{-(T-t)\kappa}\big)\theta +e^{-(T-t)\kappa}S_t.$$
In the second case, $S=X^2$, we find
$${F(t,T)}=\frac{\big(\kappa(1-e^{(T-t)G_2})+G_2(1-e^{-(T-t)\kappa})\big)\theta}{\kappa+G_2}
 +\frac{G_1\big(e^{(T-t)G_2}-e^{-(T-t)\kappa}\big)}{\kappa+G_2}\sqrt{S_t}+
e^{-(T-t)\kappa}S_t,$$
where $G_1:=A+2\kappa\theta+\int y^2\mu(\de y)$ and $G_2:= -A-2\kappa+q\int y^2\mu(\de y)$.

\begin{appendices}
%proofs
%\section{Proofs of Chapter 2 and Chapter 3}

%general Levy form
\section{Proof of Theorem~\ref{thm3}} \label{app11}

We assume that $\Gcal:\Pol(E)\to C(E)$ is a linear operator that satisfies the positive maximum principle and $\Gcal 1=0$.

Fix $x\in E$ and define the linear functionals $\Wcal_{ij}: \Pol(E-x) \to\R$ for $i,j=1,\ldots d$ by
\[
\Wcal_{ij}(p) := \Gcal\left( p(\fdot - x) e_i^\top (\fdot - x) e_j^\top (\fdot - x) \right)(x),
\]
as well as $\Wcal_u:\Pol(E-x)\to \R$ for $u\in\R^d$ by
\[
\Wcal_u(p) := \sum_{i,j=1}^d u_i u_j \Wcal_{ij}(p) = \Gcal\left( p(\fdot - x) (u^\top (\fdot - x))^2 \right)(x).
\]
Here and throughout the proof we view $u^\top (\fdot - x)$ as a polynomial on $E$ to avoid the more cumbersome notation $u^\top (\fdot - x)|_E$.

If $p\ge0$ on $E-x$, then $p(\fdot - x) (u^\top (\fdot - x))^2 \in \Pol(E)$ is minimal at $x$, which by the positive maximum principle yields $\Wcal_u(p)\ge0$. The Riesz-Haviland theorem, Lemma~\ref{lem5}, thus provides measures $\nu_u(x,\de\xi)$ concentrated on $E-x$ such that
\[
\Wcal_u(p) = \int p(\xi) \nu_u(x,\de\xi).
\]
By polarisation we have $\Wcal_{ij}=\frac{1}{2}(\Wcal_{e_i+e_j}-\Wcal_{e_i}-\Wcal_{e_j})$, whence
\[
\Wcal_{ij}(p) = \int p(\xi) \nu_{ij}(x,\de\xi),  \qquad \nu_{ij}=\frac{1}{2}(\nu_{e_i+e_j}-\nu_{e_i}-\nu_{e_j}).
\]
The triplet $(a,b,\nu)$ is now defined at $x$ by
\begin{equation} \label{eq:biaij}
a_{ij}(x) := \nu_{ij}(x,\{0\}), \qquad b_i(x) := \Gcal( e_i^\top (\fdot - x))(x),
\end{equation}
and
\[
\nu(x,\de\xi) := \frac{1}{|\xi|^2}\mathds1_{\{\xi\ne0\}}\left( \nu_{e_1}(x,\de\xi) + \cdots + \nu_{e_d}(x,\de\xi)\right).
\]
Next, observe that
\begin{align*}
\int \xi_i\xi_j p(\xi) |\xi|^2 \nu(x,\de\xi) &= \sum_{k=1}^d \int \xi_i\xi_j p(\xi) \nu_{e_k}(x,\de\xi) \\
&= \sum_{k=1}^d \Gcal\left( p(\fdot - x) e_i^\top (\fdot - x) e_j^\top (\fdot - x) (e_k^\top (\fdot - x))^2 \right)(x) \\
&= \Gcal\left( p(\fdot - x) e_i^\top (\fdot - x) e_j^\top (\fdot - x) |\fdot - x|^2 \right)(x) \\
&= \Wcal_{ij}( p\, |\fdot|^2 ) \\
&= \int p(\xi)|\xi|^2 \nu_{ij}(x,\de\xi),
\end{align*}
for all $p\in\Pol(E-x)$. By Weierstrass's theorem and dominated convergence, this actually holds for all $p\in C(E-x)$, whence $\mathds1_{\{\xi\ne0\}}\nu_{ij}(x,\de\xi)=\xi_i\xi_j\nu(x,\de\xi)$. Consequently,
\begin{equation} \label{eq:Wij(p)}
\Wcal_{ij}(p) = \int p(\xi) \xi_i \xi_j \nu(x,\de\xi) + p(0)a_{ij}(x).
\end{equation}
Consider now any polynomial $p\in\Pol(E-x)$, and choose a representative $q\in\Pol(\R^d)$, $p=q|_{E-x}$. Note that $q$ is of the form
\[
q(\xi) = c_0 + \sum_{i=1}^d c_i \xi_i + \sum_{i,j=1}^d \xi_i \xi_j q_{ij}(\xi)
\]
for some polynomials $q_{ij}\in\Pol(\R^d)$. Let $p_{ij}:=q_{ij}|_{E-x} \in \Pol(E-x)$. Then the linearity of $\Gcal$, the fact that $\Gcal 1=0$, and~\eqref{eq:biaij} and~\eqref{eq:Wij(p)} yield
\begin{align*}
\Gcal( p(\fdot - x))(x) &= c_0\, \Gcal 1(x) + \sum_{i=1}^d c_i\, \Gcal( e_i^\top (\fdot - x) )(x) \\
&\qquad + \sum_{i,j=1}^d \Gcal\left( p_{ij}(\fdot-x) e_i^\top (\fdot-x)e_j^\top (\fdot-x)\right)(x) \\
&= \sum_{i=1}^d c_i b_i(x) + \sum_{i,j=1}^d \left( \int q_{ij}(\xi) \xi_i \xi_j \nu(x,\de\xi) + q_{ij}(0) a_{ij}(x) \right) \\
&= \frac{1}{2}\tr\left( a(x) \nabla^2 q(0) \right) + b(x)^\top \nabla q(0) \\
&\qquad + \int \left( q(\xi) - q(0) - \xi^\top \nabla q(0) \right) \nu(x,\de\xi).
\end{align*}
Thus, with $p(\xi)=f(x+\xi)$ for a polynomial $f\in\Pol(E)$, we obtain the desired form~\eqref{eq:G LK}, where the right-hand side is computed using a representative of $f$, the choice of which is arbitrary.

It remains to verify that the $a$, $b$, and $\nu$ satisfy the additional stated properties. First, $a(x)$ is positive semidefinite since $u^\top a(x) u=\sum_{i,j=1}^d u_iu_j\nu_{ij}(x,\{0\})=\nu_u(x,\{0\})\ge0$, and $\nu$ clearly satisfies the support conditions $\nu(x,\{0\})=0$ and $\nu(x,(E-x)^c)=0$. Next, since $\Gcal$ maps polynomials to continuous functions, it is clear from~\eqref{eq:biaij} that $b$ is bounded and measurable. Similarly, $x\mapsto\int p(\xi) \nu_u(x,\de\xi)=\Gcal( p(\fdot - x) (u^\top (\fdot - x))^2 )(x)$ is continuous, hence bounded and measurable, for every $p\in\Pol(E)$, and so by the monotone class theorem $\nu_u(\fdot, A)$ is measurable for every Borel set $A\subseteq E-x$. Thus $\nu_u(x,\de\xi)$ is a kernel, from which it follows that $a$ is measurable and $\nu(x,\de\xi)$ is a kernel. Finally, continuity in $x$ of
\[
\tr( a(x) ) + \int |\xi|^2 \nu(x,\de\xi) = \nu_{e_1}(x,E-x) + \cdots + \nu_{e_d}(x,E-x) = \Gcal( |\fdot - x|^2)(x)
\]
implies that $a$ and $\int|\xi|^2\nu(\fdot,\de\xi)$ are bounded on $E$. \qed

\section{Proof of Theorem \ref{lem3} and Lemma \ref{lem7}}\label{appa2}

%conic &closedness
\begin{proof}[Proof of Theorem~\ref{lem3}]
Let $\alpha,\beta\in\R_+$ and  $\Gcal_1,\Gcal_2\in\Kcal$, and fix $f\in\Pol_k(E)$ for some $k\in\N$. Then, since $\Gcal_1 f, \Gcal_2 f\in\Pol_k(E)$, we have that
$$\Gcal f:=\alpha\Gcal_1 f+\beta \Gcal_2f\in\Pol_k(E)$$
as well, proving that it is polynomial. By Theorem~\ref{lem2}, the well-posedness of the martingale problem for $\Gcal$ follows directly by the well-posedness of the martingale problems of $\Gcal_1$ and $\Gcal_2$.
For the second part, set $\left(\Gcal_n\right)_{n\in\N}$ as in the statement of the theorem and recall that $\Pol_k(E)$ is closed under pointwise 
convergence for each $k\in\N_0$.  Fixing $f\in\Pol_k(E)$, since $\Gcal_n f\in\Pol_k(E)$  by the polynomial  property of $\Gcal_n$, 
we can conclude that $\Gcal f\in\Pol_k(E)$ as well. Again, existence and uniqueness of a solution to the martingale problem is guaranteed by Theorem~\ref{lem2}, since
$\mathcal{G}1=0$ and the positive maximum principle is preserved in the limit.
\end{proof}

\begin{proof}[Proof of Lemma~\ref{lem7}]
In order to prove the first part of the lemma, it is enough to observe that for all $n\in\N$ and $|\bk|\geq1$,
\begin{equation}\label{eqn59}
\Gcal_n\big((\fdot-x)^\bk\big)(x)=\begin{cases}
b^n_i(x)&\text{if } \bk=e_i,\\
a^n_{ij}(x)+\int\xi_i\xi_j\nu_n(x,\de\xi)&\text{if } \bk=e_i+e_j,\\
\int\xi^ \bk\nu_n(x,\de\xi)&\text{if } |\bk|\geq3.
\end{cases}
\end{equation}
For the second part of the lemma, if $\Gcal$ is well-defined we know from Theorem~\ref{lem3} that it has a \lkr, for some coefficients $a$ and $b$, and a kernel $\nu(x,\de\xi)$. As a result, the analog of (\ref{eqn59}) holds true for $\Gcal$ and by definition of the limit we thus obtain
$$b_i(x)=\Gcal\big((\fdot-x)^\bk\big)(x)=\lim_{n\to\infty}\Gcal_n\big((\fdot-x)^\bk\big)(x)=\lim_{n\to\infty}b_i^n(x),$$
and similarly
\begin{equation}\label{eqn67}
a_{ij}(x)\mathds1_{\{\bk=e_i+e_j\}}+\int\xi^\bk\nu(x,\de\xi)=\lim_{n\to\infty}\bigg(a_{ij}^n(x)\mathds1_{\{\bk=e_i+e_j\}}+\int\xi^\bk\nu_n(x,\de\xi)\bigg),
\end{equation}
for all $|\bk|\geq2$. Since $\nu(x,\de\xi)$ does not have mass in 0, $\nu(x,\de\xi)=|\xi|^{-4}\big(|\xi|^4\nu(x,\de\xi)\big)$. Moreover, using that 
moments completely determine compactly supported finite distribution, the kernel $|\xi|^4\nu(x,\de\xi)$, and thus $\nu(x,\de\xi)$, is uniquely determined by (\ref{eqn67}). \end{proof}

\section{Proof of Theorem~\ref{T:pol is affine}}\label{secc}

%corollary linear jump size
Throughout the proof we assume without loss of generality that $E$ has nonempty interior.
Suppose that $\nu(x,\de\xi)$ is the zero measure for all $x\in E$. Setting $\lambda=0$, 
the form of $\gamma(x,y)$ and the measure $\mu$ are irrelevant and we are thus free to choose $K\leq1$.
We may therefore suppose that $\nu(x,\fdot)$ is nonzero for some $x\in E$, and thus in particular 
 \begin{equation}\label{eqnn4}
 \mu\big(\gamma(x,\fdot)\neq0\big)>0
 \end{equation}
 for at least one $x\in E$. As in Remark \ref{rem3}, we can assume without loss of generality that $\mu$ is compactly supported 
 and hence all its moments of order at least two are finite. Set then
 \begin{align*}
p_\bn&:=\int\gamma^\bn(\fdot,y)\mu(\de y)\qquad\text{and}\qquad
r_\bn:=\int\xi^\bn\nu(\fdot,\de\xi)=\lambda p_\bn
\end{align*}
and note that, by the integrability conditions on $\mu$ and condition \eqref{eq:pol is affine:2} respectively,  $p_\bn$ and $r_\bn$ are polynomials on $E$ for all $|\bn|\geq3$. In particular,
 $p_{4e_i}$ is a nonzero polynomial for at least one $i\in\{1,\ldots,d\}$ by (\ref{eqnn4}), and thus
 \begin{equation}\label{eqn60}
 \lambda(x)=\frac{r_{4e_i}(x)}{p_{4e_i}(x)}
 \end{equation}
 for all $x\in E\setminus\{p_{4e_i}=0\}$.
 
Since $E$ has nonempty interior by assumption, each polynomial $p\in\Pol(E)$ has a unique representative $\overline p\in\Pol(\R^d)$ such that $\overline p|_E=p$. In particular, the degree of a polynomial on $E$ always coincides with the maximal degree of its monomials. Assume now for contradiction that $K$ cannot be chosen less than or equal one. Let
$$\bn_j:=\sup\{\bk: \mu(y_{\bk}^j\neq0)\neq0\}$$
 be the multi-index corresponding to the leading monomial of $\gamma_j(x,y)$, with respect to some graded lexicographic order. 
 Choose $j\in\{1,\ldots,d\}$ such that $|\bn_j|\geq2$ and note that by the maximality of $\bn_j$ and since $\int (y_{\bn_j}^j)^{{\kk}}\mu(\de y)>0$ we have that
$$\deg(p_{{\kk}e_j})=\deg\bigg(\int (y_{\bn_j}^j)^{{\kk}}\mu(\de y)x^{{\kk}\bn_j}\bigg)={\kk}|\bn_j|.$$
Analogously, $\deg(p_{4e_j})=4|\bn_j|$ and thus (\ref{eqn60}) holds true for $i=j$. 
Since ${p_{{\kk}e_j}(x)}r_{4e_j}(x)=p_{4e_j}(x)r_{{\kk}e_j}(x)$, using that $|\bn_j|\geq2$ we can compute
$$\deg(p_{10e_j}r_{4e_j})\geq10|\bn_j|>4|\bn_{j}|+10\geq\deg(p_{4e_j}r_{{\kk}e_j}),$$
and obtain the desired contradiction. As a result, $K$ can always be chosen smaller than or equal one.
\qed

%\section{Proofs of Chapter 4 and Chapter 6}

\section{The unit interval: Proof of Lemma~\ref{thm4} and Theorem~\ref{thm1}}\label{app21}
%on the unit interval
\begin{proof}[Proof of Lemma~\ref{thm4}]

Assume $\Gcal$ is a polynomial  operator and its martingale problem is well-posed. Theorem~\ref{lem2} and Theorem~\ref{thm3} imply that $\Gcal$ is of L\'evy type for some triplet $(a,b,\nu)$, so that in particular (i) holds. Condition (iii) follows from Lemma~\ref{lem1}. To verify (ii), let $f_n$ be polynomials on $[0,1]$ with $0\le f_n\le 1$, $f_n(0)=1$, $xnf_n(x)\le 1$, and $f_n(x)\downarrow 0$ for $x\in(0,1]$. For example, one can choose $f_n(x):= \frac {n-1}n(1-x)^n+\frac 1 n$. Let $g_n(x):=\frac x n-x^2f_n(x)$. Then $g_n$ has a minimum at $x=0$, so by the positive maximum principle,
\[
0\le \Gcal g_n(0) = -a(0) + \frac{1}{n}b(0) - \int f_n(\xi)\xi^2 \nu(0,\de\xi) \to -a(0), \qquad n\to\infty,
\]
where the dominated convergence theorem was used to pass to the limit.
Similarly, $h_n(x):=xf_n(x)$ is nonnegative on $[0,1]$ with a minimum at $x=0$, so the monotone convergence theorem yields
\[
0 \le \Gcal h_n(0) = b(0) - \int \xi (1-f_n(\xi)) \nu(0,\de\xi) \to b(0) - \int \xi \nu(0,\de\xi), \qquad n\to\infty.
\]
 We have thus shown (ii) for the boundary point $x=0$. The case $x=1$ is similar.

%Note then that setting $g^0(x):=x$ the polynomial $fg^0$ has a minimum in $0$, for all non-negative $f\in\Pol(E)$. Proceeding as in the proof of Theorem~\ref{thm3}, one can then find measures $\overline \nu^0(0,\fdot)$ supported on $E$ and  $\nu^0(0,\fdot)$ supported on $E\setminus\{0\}$ such that for all $f\in\Pol(E)$
%$$\Gcal (g^0f) (0)=\int f(\xi) \nu^0(0,\de\xi)+f(0)\overline\nu^0(0,\{0\}).$$
%Again we then obtain that $\nu^0(0,\de\xi)=\xi\nu(0,\de\xi)$ and hence
%$$b(0)-\int \xi\nu(0,\de\xi)=\Gcal (f^0) (0)-\int\xi\nu(0,\de\xi)=\nu^0_0(0,\{0\})\geq0.$$
%Moreover, since $\Gcal (g^0g^0) (0)=a(0)+\int\xi^2\nu(0,\de\xi)$ we also have
%$$a(0)=\int\xi\xi\nu(0,\de\xi)-\int\xi^2\nu(0,\de\xi)=0.$$
%The conditions on $a(1)$ and $b(1)$ hold true by symmetry and the conditions of the last item follow directly form Lemma~\ref{lem1}.

We now prove the converse. Lemma~\ref{lem1} and (iii) imply that $\Gcal$ is polynomial. Next, clearly $\Gcal 1=0$. Thus, by Theorem~\ref{lem2} it only remains to verify the positive maximum principle in order to deduce that the martingale problem for $\Gcal$ is well-posed. To this end, let $f\in \Pol(E)$ be an arbitrary polynomial having a maximum over $E$ on some $x\in E$.
If $x\in\Int(E)$ it follows that $f'(x)=0$, $f''(x)\leq0$, and $f(x)\geq f\big(x+\xi\big)$ for all $\xi\in E-x$. Hence, using that $a\geq0$ on $E$, we conclude that $\Gcal f(x)\leq0$. 
On the other hand, if $x\in\partial E=\{0,1\}$ we use that $a(x)=0$ and the integrability of $\xi$ with respect to $\nu(x,\fdot)$ to write
$$\Gcal f(x)= \left(b(x)-\int\xi \nu(x,\de \xi)\right) f'(x)+\int \left( f(x+\xi)-f(x) \right) \nu(x,\de \xi).$$
The Karush-Kuhn-Tucker conditions (see e.g.~Proposition~3.3.1 in \citet{B:99}) imply that $f'(x)\leq0$ if $x=0$ and $f'(x)\geq0$ if $x=1$, and thus the first summand is nonnegative by (ii). Using as before that $f(x)\geq f\big(x+\xi\big)$ for all $\xi\in E-x$, we conclude that $\Gcal f(x)\leq0$.
\end{proof}

\begin{proof}[Proof of Theorem~\ref{thm1}]
By assumption $\Gcal$ is polynomial and its martingale problem is well-posed. Hence, conditions (i)-(iii)  of Lemma~\ref{thm4} are satisfied.
As in Remark \ref{rem3}, we can assume without loss of generality that $\mu$ is compactly supported. In particular, all its moments of order at least two are finite. For all $n\geq2$ set then
\begin{equation}\label{eqn7}
p_{n}:=\int\gamma^n(\fdot,y)\mu(\de y)\qquad\textup{and}\qquad r_n:=\int\xi^n\nu(\fdot,\de\xi)=\lambda p_n.
\end{equation}
Note that $p_n\in\Pol_n(E)$ for all $n\geq2$ by the integrability conditions on $\mu$, and $r_n\in\Pol_n(E)$ for all $n\geq3$ by condition (iii) of Lemma~\ref{thm4}.
By Remark \ref{rem10} we know that
$$\lambda(x)=\frac {r_4(x)}{p_4(x)}\mathds1_{\{p_4(x)\neq0\}}$$
and hence the condition $\nu\big(x,(E-x)^c\big)=0$ of \eqref{thenu} implies that $\mu$ can be chosen to be supported on $[0,1]^2$ and such that $\gamma(x,y)=y_1(-x)+y_2(1-x)$ $\mu$-a.s. 
By Lemma~\ref{thm4}(iii) we also know that $b\in\Pol_1(E)$ and, by Lemma~\ref{thm4}(ii), that the boundary conditions
\begin{equation}\label{eqn69}
b(0)\geq\lambda(0)\int \gamma(0,y)\ \mu(\de y)\qquad\textup{and}\qquad b(1)\leq\lambda(1)\int \gamma(1,y)\ \mu(\de y)
\end{equation}
hold. 
We consider now five complementary assumptions, which will lead to Types~0 to~4.

Assume that $\nu(x,\de\xi)=0$. Then Lemma~\ref{thm4} implies that $a(x)=Ax(1-x)$ for some $A\in\R_+$. This proves that $\Gcal$ is an operator of Type~0.

Assume now that $\nu(x,\de\xi)\neq0$ and $\lambda$ can be chosen to be constant. We can then without loss of generality set $\lambda=1$. 
Moreover, since in this case $r_{2}\in\Pol_2(E)$, we can conclude as before that $a(x)=Ax(1-x)$ for some $A\in\R_+$. This proves that $\Gcal$ is an operator of Type~1.

Assume that $\nu(x,\de\xi)\neq0$, $\lambda$ cannot be chosen to be constant, and $p_4(x^*)=0$ for some $x^*\in\R$. 
By definition of $p_4$ this automa\-tically implies that $\gamma(x^*,y)=0$, and in particular $x^*=y_2(y_1+y_2)^{-1}$, for $\mu$-a.e.~$y\in[0,1]^2$. As a result, $x^*$ lies in $E$, and setting $\overline y:=y_1+y_2$ we obtain $\gamma(x,y)=-\overline y(x-x^*)$. Moreover, since $\overline y=\frac{y_2}{x^*}=\frac{y_1}{1-x^*}$ $\mu$-a.s., 
we can conclude that it is square-integrable and takes values in the set $[0,(x^*\vee(1-x^*))^{-1}]$ $\mu$-a.s.
By (\ref{eqn7}) we can then write
$$\lambda(x)=\frac{r_3(x)}{p_{3}(x)}=\frac{\overline r_3(x)}{(x-x^*)^3}$$
for some $\overline r_3\in\Pol_3(E)$, for all $x\in E\setminus\{x^*\}$. Since in this case $\nu(x^*,\fdot)=0$ we are free to choose $\lambda(x^*)=0$.
By Lemma~\ref{thm4} we also know that $r_2$ is bounded on $E$. Therefore, noting that
$$r_2(x)=\frac{\overline r_3(x)}{(x-x^*)}\int(-\overline y)^2\mu(\de \overline y) \quad\text{for all $x\in E\setminus\{x^*\}$,}$$
it follows that $\overline r_3(x^*)=0$, and thus $\lambda(x)=\frac{q_2(x)}{(x-x^*)^2}$ for some $q_2\in\Pol_2(E)$ and all in $x\in E\setminus\{x^*\}$. This in particular implies that $r_2\in\Pol_2(E\setminus\{x^*\})$ and hence $a\in\Pol_2(E\setminus\{x^*\})$. Knowing that $a+r_2$ has to be continuous by condition (iii) of Lemma~\ref{thm4}, we can finally  deduce that
$$a(x^*)=\lim_{x\to x^*}a(x)+{q_2(x^*)}\int\overline y^2\ \mu(\de \overline y).$$

Suppose now $x^*\in\{0,1\}$. Then,  since $a\geq0$ on $E$ and $a(0)=a(1)=0$, we conclude that $q_2(x^*)=0$, $a(x)=Ax(1-x)$ for some $A\in\R_+$, and thus $\lambda(x)=\frac {q_1(x)}{(x-x^*)}\mathds1_{\{x\neq x^*\}}$ for some $q_1\in\Pol_1(E)$. For $x^*=0$, respectively $x^*=1$, the nonnegativity of $\lambda$ implies that $q_1(x)=1+qx$, respectively $q_1(x)=1+q(1-x)$, for some $q\in[-1,\infty)$.  As a result, $\Gcal$ is an operator of Type~2.

On the other hand, if $x^*\in(0,1)$, using again that $a\geq0$ on $E$ and $a(0)=a(1)=0$, we conclude that
$$a(x)=Ax(1-x)+\left({q_2(x^*)}\int\overline y^2\ \mu(\de \overline y)\right)\mathds1_{\{x=x^*\}},$$
for some $A\in\R_+$, proving that $\Gcal$ is an operator of Type~3.

Assume now that $\nu(x,\de\xi)\neq0$,  $\lambda$ cannot be chosen to be constant, and $p_4(x)\neq0$ for all $x\in\R$. We must argue that $\Gcal$ is then necessarily of Type~4.
By \eqref{eqn7} we have $\lambda(x)=\frac{r_4(x)}{p_4(x)}$ on $E$, and thus on $\R$. Consequently, $\lambda$ is locally bounded, nonnegative, and non-constant. Moreover, \eqref{eqn7} yields the expression $\lambda(x)=\frac{r_3(x)}{p_{3}(x)}$  for all $x\in E$ with $p_{3}(x)\neq0$. These facts combined with the fundamental theorem of algebra imply that
\begin{equation} \label{eqn_ML1}
\lambda(x)=\frac{q_2(x)}{(x-\alpha)(x-\overline \alpha)}
\end{equation}
for some positive $q_2\in\Pol_2(E)$ and $\alpha\in\C\setminus\R$. Without losing generality we choose to satisfy $\im(\alpha)<0$. Note that no further cancellation of polynomial factors is possible in~\eqref{eqn_ML1} since $\lambda$ is non-constant.
Furthermore, condition (iii) of Lemma~\ref{thm4} yields $r_n\in\Pol_n(E)$ for all $n\geq 3$. Therefore, since $p_n(x)q_2(x)=(x-\alpha)(x-\overline \alpha)r_n(x)$ due to~\eqref{eqn7} and~\eqref{eqn_ML1}, it follows that $p_n(x)=(x-\alpha)(x-\overline\alpha)R_{n-2}(x)$ for all $x\in E$ and $n\geq3$, for some $R_{n-2}\in\Pol_{n-2}(E)$. This already implies \eqref{case4} for $n \geq 3$, i.e.,
\begin{equation} \label{eqn_ML2}
 p_n(\alpha)=\int \gamma^n(\alpha,y) \mu(dy)= \int (y_1 (-\alpha) +y_2(1-\alpha))^n \mu(dy)=0, \quad n \geq 3.
\end{equation}

Next, we will establish~\eqref{case4} also for $n=2$. In preparation for an application of Lemma~\ref{lem9} later on, choose a constant $C_\alpha$ such that 
\begin{align}\label{eq:Calpha}
\bigg|1+\frac{i\gamma(\alpha,y)}{C_\alpha}\bigg|<1\quad\text{and}\quad \frac{2|\gamma(
\alpha,y)|}{C_\alpha}<\tan^{-1}\bigg|\frac{\text{Im}(1-\alpha)}{\text{Re}(1-\alpha)}\bigg|
\end{align}
for all $y\in[0,1]^2\setminus\{0\}$.
Define
\[
f_k(z):=1-\Big(1+\frac{iz}{C_\alpha}\Big)^k,
\]
and note that $|f_k(\gamma(\alpha,y))|\leq2$ and $\lim_{k\to\infty}f_k(\gamma(\alpha,y))=1$ for all $y\in[0,1]^2\setminus\{0\}$. 
By dominated convergence we then obtain
\begin{equation}\label{eqn53}
\int \gamma^2(\alpha,y) \mu(\de y)=\lim_{k\to\infty}\int \gamma^2(\alpha,y)f_k\big(\gamma(\alpha,y)\big) \mu(\de y) = 0,
\end{equation}
where the last equality follows since, for each $k\ge3$, the integral on the right-hand side is a linear combination of 
$\int \gamma^n(\alpha,y) \mu(dy)$ with $3 \leq n \leq k$, and therefore vanishes due to~\eqref{eqn_ML2}. Hence, \eqref{case4} holds also for $n=2$.

We now derive some consequences. First, $r_2\in\Pol_2(E)$ and hence $a(x)=Ax(1-x)$ for some $A\in \R_+$.
Moreover, since
$$\Arg\big(\gamma^2(\alpha,\fdot)\big)\subseteq\big[2\Arg(1-\alpha),2\Arg(-\alpha)\big]$$
holds $\mu$-a.s., (\ref{eqn53}) implies that $\Arg(-\alpha)-\Arg(1-\alpha)\geq\pi/2$, or equivalently, $|2\alpha-1|\leq1$. In the description of Type~4 it is claimed in addition that the inequality is strict, i.e.
\begin{equation} \label{eqn0000001}
|2\alpha-1|<1.
\end{equation}
To see this, observe that in case of equality, (\ref{eqn53}) would imply that $\Arg(\gamma(\alpha,y))\subseteq \{\Arg(-\alpha),\Arg(1-\alpha)\}$ for $\mu$-a.e. $y\in[0,1]^2$, which is clearly incompatible with having $\int\gamma^3(\alpha,y)\mu(\de y)=0$ for some nontrivial measure~$\mu$.

Next, we claim that $\int y_1 \mu(\de y)=\int y_2 \mu(\de y)=\infty$. We prove this by excluding the complementary possibilities. First, assume for contradiction that $\int y_1 \mu(\de y)<\infty$ and $\int y_2 \mu(\de y)<\infty$. Then $\int |\gamma(\alpha,y)|\mu(\de y)<\infty$. Proceeding as with~\eqref{eqn53} we then deduce
$$\int \gamma(\alpha,y) \mu(\de y)=\lim_{k\to\infty}\int \gamma(\alpha,y)f_k\big(\gamma(\alpha,y)\big) \mu(\de y)=0,$$
which is clearly not possible since $\text{Im}(\gamma(\alpha,y))>0$ $\mu$-a.s. This is the desired contradiction.

Suppose instead $\int y_1 \mu(\de y)<\infty$ and $\int y_2 \mu(\de y)=\infty$. Define
\begin{equation}\label{eqn65}
g_k(y):=\textup{Re}\big(\gamma(\alpha,y)f_k\big(\gamma(\alpha,y)\big)\big).
\end{equation}
Set $C:={\text{Im}(1-\alpha)}({\text{Re}(1-\alpha)})^{-1}$ and observe that $C>0$ due to the fact that $\text{Re}(1-\alpha)>0$ in view of \eqref{eqn0000001}. Then define the set
$$A:=\left\{y\in[0,1]^2:\frac{\textup{Im}\big(\gamma(\alpha,y)/C_\alpha\big)}{\textup{Re}\big(\gamma(\alpha,y)/C_\alpha\big)}\in 
[C,2C],\ \frac{2|\gamma(\alpha,y)|}{C_\alpha}\leq{\tan^{-1}(C)},\ \textup{Im}\Big(\frac{\gamma(\alpha,y)}{C_\alpha}\Big)\geq0 \right\}.$$
Choosing $\e>0$ small enough such that $\{y_1<\e y_2\}\cap[0,1]^2 \subseteq A$, by Lemma \ref{lem9} we have that
 $$\{y_1<\e y_2\}\cap[0,1]^2\subseteq A\subseteq \left\{h_k\bigg(\frac{\gamma(\alpha,y)}{C_\alpha}\bigg)\geq0\right\}=\left\{\frac{g_k(y)}{C_\alpha}\geq0\right\}= \{g_k(y)\geq0\}.$$
 We can then compute
 $$g_k(y)\geq-2|\gamma(\alpha,y)|\mathds1_{\{g_k(y)<0\}} \geq
-2|\gamma(\alpha,y)|\mathds1_{\{y_1\geq \e y_2\}} \geq
-2(y_1|\alpha|+\e^{-1}y_1|1-\alpha|),$$
for all $k\in \N$ and $\mu$-a.e.~$y\in[0,1]^2$. Fatou's lemma then yields
$$0=\lim_{k\to\infty}\int g_k(y)\mu(\de y)\geq -\textup{Re}(\alpha)\int y_1\mu(\de y)+\textup{Re}(1-\alpha)\int y_2\mu(\de y)=\infty,$$
using in the last step that $\text{Re}(1-\alpha)>0$. Again we arrive at a contradiction.

Finally, suppose $\int y_1 \mu(\de y)=\infty$ and $\int y_2 \mu(\de y)<\infty$. We may then repeat the arguments from the first case, using the function $-g_k$ instead of $g_k$ to obtain the required contradiction. In summary, we have shown that $\int y_1 \mu(\de y)=\int y_2 \mu(\de y)=\infty$, as claimed.

Finally, the boundary conditions~\eqref{eqn69} now forces $\lambda(0)=\lambda(1)=0$, which in view of~\eqref{eqn_ML1} yields $q_2(0)=q_2(1)=0$. Therefore $q_2(x)=Lx(1-x)$ for some constant $L>0$, as claimed. As a result, $\Gcal$ is an operator of Type~4, and the proof of Theorem~\ref{thm1} is complete.
\end{proof}

\begin{lemma}\label{lem9}
Fix $C>0$ and set 
$h_k(z):=\textup{Re}\big(z(1 -(1 +iz)^k)\big)$
for all $k\in\N$. Then there is some $K\in\N$ such that
$$\Big\{\textup{Im}(z)/\textup{Re}(z)\in [C, 2C],\ 2|z|\leq{\tan^{-1}(C)},\ \textup{Im}(z)\geq0\Big\}\subseteq\{h_k(z)\geq0\}$$
for all $k\geq K$.

\begin{proof}
Fix $c\in[C,2C]$ and let $z\in\C$ such that $\textup{Im}(z)=c\textup{Re}(z)$ and $\text{Im}(z)\geq0$. Define then $w:=1+iz$ and compute
$$
h_k(z)=h_k(i-iw)=\textup{Im}(w)\big(1-\textup{Re}(w^k)+c\textup{Im}(w^k)\big).
$$
Let $x:= \Arg(w)$, note that $x=\Arg(1+iz)\in \big[0,2|z|\big]$ and moreover
\begin{equation}\label{eqn68}
\textup{Re}(w^k)-c\textup{Im}(w^k)=\frac{\cos(kx)-c\sin(kx)}{\big(c\sin(x)+\cos(x)\big)^{k}}=\frac{\sqrt{1+c^2}\cos(kx+\tan^{-1}(c))}{\big(\sqrt{1+c^2}\cos(x-\tan^{-1}(c))\big)^k}.
\end{equation}
Since $\text{Im}(w)=\text{Re}(z)\geq 0$, it is then enough to show that for $k$ big enough this expression is smaller than or equal to 1 for all $x\in\big[0, \tan^{-1}(C)\big]$ and $c\in[C,2C]$.

Let $x_k^c:=(\pi-\tan^{-1}(c))/k$ be the first minimum of the numerator. Observe that for $x=x_k^c$ the denominator converges to $\exp\big(c(\pi-\tan^{-1}(c))\big)>\sqrt{1+c^2}$ uniformly on compact sets. As a result, for $k$ big enough, we have that
$$\sup_{c\in[C,2C]}\frac{\cos(kx)-c\sin(kx)}{\big(c\sin(x)+\cos(x)\big)^{k}}\leq \sup_{c\in[C,2C]}\frac{\cos(kx)-c\sin(kx)}{\big(c\sin(x^c_k)+\cos(x^c_k)\big)^{k}}\leq1,$$
for all $x\in[x_k^c,\tan^{-1}(C)]$.
Since expression \eqref{eqn68} takes value $1$ for $x=0$ and is decreasing in $x$ on $[0,x_k^c]$, we conclude the proof.
\end{proof}
\end{lemma}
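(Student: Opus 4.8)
The plan is to reduce the claimed inclusion to a scalar estimate via the substitution $w:=1+iz$, and then control that estimate in two regimes according to the size of $\Arg(w)$. On the region in question one has $\textup{Im}(z)>0$, and writing $c:=\textup{Im}(z)/\textup{Re}(z)\in[C,2C]$ forces $\textup{Re}(z)>0$ and $z=\textup{Re}(z)(1+ic)$. A direct computation gives $\textup{Re}(w)=1-c\,\textup{Re}(z)$, $\textup{Im}(w)=\textup{Re}(z)$, and
\[
h_k(z)=\textup{Im}(w)\Big(1-\textup{Re}(w^k)+c\,\textup{Im}(w^k)\Big).
\]
Since $\textup{Im}(w)=\textup{Re}(z)>0$, it suffices to prove that $\textup{Re}(w^k)-c\,\textup{Im}(w^k)\le 1$ for all $k\ge K$, uniformly over the region and over $c\in[C,2C]$.

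First I would set $x:=\Arg(w)$. Because $w$ lies in the disc $B(1,|z|)$ with $|z|\le\tan^{-1}(C)/2<1$ and $\textup{Im}(w)>0$, one has $0<x\le\arcsin|z|\le 2|z|\le\tan^{-1}(C)$, using the elementary inequality $\arcsin t\le 2t$ on the relevant range. From $\textup{Im}(w)=\textup{Re}(z)$ and $\textup{Re}(w)=1-c\,\textup{Re}(z)$ one gets $|w|(\cos x+c\sin x)=1$, so writing $w^k=|w|^ke^{ikx}$ yields the closed form
\[
\textup{Re}(w^k)-c\,\textup{Im}(w^k)=\frac{\cos(kx)-c\sin(kx)}{(\cos x+c\sin x)^k}=\frac{\sqrt{1+c^2}\,\cos\!\big(kx+\tan^{-1}c\big)}{\big(\sqrt{1+c^2}\,\cos(x-\tan^{-1}c)\big)^k},
\]
the second equality being amplitude--phase rewriting; note this quantity equals $1$ at $x=0$. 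It then remains to prove the right-hand side is $\le 1$ for every $x\in[0,\tan^{-1}(C)]$ and $c\in[C,2C]$, once $k$ is large.

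I would split $[0,\tan^{-1}(C)]$ at $x_k^c:=(\pi-\tan^{-1}c)/k$, the location of the first minimum of $x\mapsto\cos(kx+\tan^{-1}c)$. On $[0,x_k^c]$ the numerator $\sqrt{1+c^2}\cos(kx+\tan^{-1}c)$ decreases from $1$, hence is $\le1$, while (since $x_k^c\le\pi/k\le 2\tan^{-1}(C)$ once $k\ge\pi/(2\tan^{-1}C)$) one has $|x-\tan^{-1}c|\le\tan^{-1}c$ there, so $\cos(x-\tan^{-1}c)\ge\cos(\tan^{-1}c)$ and the denominator is $\ge1$; thus the ratio is $\le1$. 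On $[x_k^c,\tan^{-1}(C)]$ I would bound the numerator by $\sqrt{1+c^2}$ and bound the denominator below by its value at $x_k^c$: since $\tan^{-1}(C)\le\tan^{-1}c$, the map $x\mapsto\cos(x-\tan^{-1}c)$ is increasing on this interval, so $(\cos x+c\sin x)^k\ge(\cos x_k^c+c\sin x_k^c)^k$. The crucial observation is that
\[
(\cos x_k^c+c\sin x_k^c)^k=\Big(1+\tfrac{c(\pi-\tan^{-1}c)}{k}+O(k^{-2})\Big)^k\longrightarrow e^{c(\pi-\tan^{-1}c)}\quad\text{as }k\to\infty,
\]
uniformly for $c\in[C,2C]$, and that $e^{c(\pi-\tan^{-1}c)}>e^{c\pi/2}>e^c>\sqrt{1+c^2}$; hence for $k$ large the ratio is at most $\sqrt{1+c^2}/(\cos x_k^c+c\sin x_k^c)^k\le1$ on this interval. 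Combining the two regimes and using $\textup{Im}(w)>0$ shows $h_k(z)\ge0$ throughout the region for all $k\ge K$.

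The main obstacle is precisely the small-$x$ regime $[0,x_k^c]$: there the denominator has not yet grown, so boundedness of the numerator alone says nothing, and one must instead exploit the special phase $x_k^c=(\pi-\tan^{-1}c)/k$ together with the sharp asymptotic $(\cos x_k^c+c\sin x_k^c)^k\to e^{c(\pi-\tan^{-1}c)}$, and check that this limit beats the amplitude $\sqrt{1+c^2}$ with a gap that is uniform in $c\in[C,2C]$ (continuity on a compact set). A secondary point needing care is that all estimates must be uniform in $c$, so that a single threshold $K$ works simultaneously for every $z$ in the region.
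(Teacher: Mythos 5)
Your proof is correct and follows essentially the same route as the paper: the substitution $w=1+iz$, the amplitude--phase identity \eqref{eqn68}, the split of $[0,\tan^{-1}(C)]$ at $x_k^c=(\pi-\tan^{-1}c)/k$, and the uniform asymptotic $(\cos x_k^c+c\sin x_k^c)^k\to e^{c(\pi-\tan^{-1}c)}>\sqrt{1+c^2}$. Your treatment of the regime $[0,x_k^c]$ (numerator $\le 1$, denominator $\ge 1$) is a slightly more careful phrasing of the paper's monotonicity argument, but the substance is identical.
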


%%%SUBSECTION: SIMPLEX
\section{The unit simplex: Proof of Lemma~\ref{thm6} and Theorem \ref{th:mainsimplex}}\label{app22}

%on the simplex
\begin{proof}[Proof of Lemma~\ref{thm6}]

Assume $\Gcal$ is a polynomial  operator and its martingale problem is well-posed. Theorem~\ref{lem2} and Theorem~\ref{thm3} imply that $\Gcal$ is of L\'evy type for some triplet $(a,b,\nu)$, so that in particular (i) holds. Condition (iv) follows from Lemma~\ref{lem1}.
To prove (ii), fix $x\in E\cap\{x_i=0\}$.
Let $g^i_n(x):=g_n(x_i)$ and $h^i_n(x):=h_n(x_i)$, where $g_n$ and $h_n$ are the functions on $[0,1]$ described in the proof of Lemma~\ref{thm4}. Then by the positive maximum principle we conclude that 
\[
 0\leq\Gcal g^i_n(x) \to-a_{ii}(x)\qquad\text{and}\qquad \qquad 0\leq \Gcal h^i_n(x) \to b_i(x) - \int \xi_i \nu(x,\de\xi).
\]
The positive semidefiniteness of $a(x)$ then implies that $a_{ij}(x)=0$ for all $j\in\{1,\ldots, d\}$.
%Define $g^i(x):=x_i$ for all $x\in E$ and $i\in\{1,\ldots,d\}$. Fix then $x\in E\cap\{x_i=0\}$ and note that $fg^i$ has a minimum in $x$ for all nonnegative $f\in\Pol(E)$. Proceeding as in the proof of Theorem~\ref{thm3}, one can then find measures $\overline\nu^i(x,\fdot)$ supported on $E-x$ and  $\nu^i(x,\fdot)$ supported on $E-x\setminus\{0\}$ such that for all $f\in\Pol(E)$
%$$\Gcal (g^if(\fdot-x)) (x)=\int f(\xi) \nu^i(x,\de\xi)+f(0)\overline\nu^i(x,\{x\}).$$
%Again $\nu^i(x,\de\xi)=\xi_i\nu(x,\de\xi)$ and hence,
%$$b_i(x)-\int \xi_i\nu(x,\de\xi)=\Gcal(g^i)(x)-\int\xi_i\nu(x,\de\xi)=\overline\nu^i(x,\{x\})\geq0.$$
%Moreover, since $\Gcal (g^ig^j(\fdot-x)) (x)=a_{ij}(x)+\int\xi_i\xi_j\nu(x,\de\xi)$, we also have
%$$a_{ij}(x)=\int\xi_j\xi_i\nu(x,\de\xi)-\int\xi_i\xi_j\nu(x,\de\xi)=0,$$
% for all $j\in\{1,\ldots,d\}$.
In order to verify (iii), note that setting $f^\Delta(x):=x^\top\mathbf1-1$, by the positive maximum principle we have
\begin{align*}
0=\Gcal(f^\Delta)(x)=b(x)^\top\mathbf1\qquad\text{and}\qquad
0=\Gcal\big((\fdot-x)^{e_j}f^\Delta\big)(x)=a_j(x)^\top\mathbf1,
\end{align*}
where $a_j(x)$ denotes the $j$th column of $a(x)$.

Conversely, Lemma~\ref{lem1} and (iv) imply that $\Gcal$ is polynomial.
Thus by Theorem~\ref{lem2}, the martingale problem for $\Gcal$ is well-posed, provided that $\Gcal 1=0$ and  $\Gcal$ satisfies the positive maximum principle. The first condition is clearly satisfied. For the second one, let $g^i(x):=x_i$ and $f\in \Pol(E)$ be an arbitrary polynomial having a maximum over $E$ at some $x\in E$. Observe that 
$$E=\big\{x\in\R^d\ :\ f^\Delta=0\text{ and }g^i\geq0 \text{ for all }i\in\{1,\ldots,d\}\big\},$$
and let $I(x)$ be the set of all active inequality constraints at point $x$, that is, $I(x)$ is the set of all $i\in\{1,\ldots,d\}$ such that $x_i=0$.
By the necessity of the Karush-Kuhn-Tucker conditions (see e.g.~Proposition~3.3.1 in \citet{B:99}), there exist multipliers $\mu\in\R_+^d$ and $\lambda\in\R$ such that $\mu_i=0$ for all $i\in\{1,\ldots,d\}\setminus I(x)$,
$$\nabla f(x)=-\sum_{i\in I(x)}\mu_i\nabla g^i(x)+\lambda \nabla f^\Delta(x)=-\sum_{i\in I(x)}\mu_ie_i+\lambda \mathbf1,$$
 and $v^\top\nabla^2f(x)v\leq0$
for all $v\in\R^d$ such that $ v^\top \mathbf 1=0$ and $v_i=0$ for all $i\in I(x)$.
Since $\xi^\top\mathbf1=0$ for $\nu(x,\fdot)$-a.e.~$\xi$, $b(x)^\top\mathbf1=0$ by (iii), and $\int|\xi_i|\nu(x,\de\xi)<\infty$ for all $i\in I(x)$ by (ii), we can thus write
\begin{align*}
\Gcal f(x) &= \frac{1}{2}\tr\left( a(x) \nabla^2 f(x) \right) +\sum_{i\in I(x)}-\mu_i\left(b_i(x)-\int\xi_i\nu(x,\de\xi)\right)\\
&\qquad + \int \left( f(x + \xi) - f(x) \right) \nu(x,\de\xi).
\end{align*}
We must argue that $\Gcal f(x)\le0$. The second term on the right-hand side is nonpositive by (ii). The last term is also nonpositive since $f$ is maximal over $E$ at $x$. It remains to show that the first term is nonpositive. To this end, let $\sqrt{a(x)}$ denote the symmetric and positive semidefinite square root of $a(x)$. Condition (iii) yields $a(x)\mathbf 1=0$ and thus $\sqrt{a(x)}\mathbf 1=0$. By symmetry of $\sqrt{a(x)}$ we deduce
$$\big(\sqrt{a(x)}v\big)^\top\mathbf 1=v^\top \sqrt{a(x)}\mathbf 1=0 \quad\text{for all $v\in\R^d$.}$$
Moreover, by (ii) we also have that $a(x)e_i=0$, and hence $\sqrt{a(x)}e_i=0$,  for all $i\in I(x)$. This implies that $\big(\sqrt{a(x)}\big)_{ij}=0$ and thus $\big(\sqrt{a(x)}v\big)_i=0$,  for all $i\in I(x)$ and $v\in \R^d$.
As a result,
$$v^\top \big(\sqrt{a(x)}\nabla^2f(x)\sqrt{a(x)}\big)v=\big(\sqrt{a(x)}v\big)^\top\nabla^2f(x)\big(\sqrt{a(x)}v\big)\leq0,$$
which implies that $\sqrt{a(x)}\nabla^2f(x)\sqrt{a(x)}$ is negative semidefinite. This gives the desired inequality
$$\Tr\big(a(x)\nabla^2f(x)\big)=\Tr\big(\sqrt{a(x)}\nabla^2f(x)\sqrt{a(x)}\big)\leq0,$$
showing that $\mathcal G f (x)\leq0$. This completes the proof of the lemma.
\end{proof}

Before starting the proof of Theorem \ref{th:mainsimplex}, we prove three auxiliary lemmas.

\begin{lemma}\label{clem1}
Consider a polynomial $p\in\Pol_n(E)$.
\begin{itemize}
\item[(a)] If $p$ vanishes on $E\cap\{x_i=x_j=0\}$, it can be written as
\begin{equation}\label{eqn62}
p(x)=x_i p_{n-1}^i(x)+x_jp_{n-1}^j(x)\quad\text{for some}\quad p_{n-1}^i,p_{n-1}^j\in\Pol_{n-1}(E).
\end{equation}
\item[(b)] If $p$ vanishes on $E\cap\big(\{x_i=0\}\cup \{x_j=0\}\big)$ for some $i\neq j$, it can be written as
\begin{equation}\label{eqn63}
p(x)=x_ix_jp_{n-2}(x) \quad\text{for some}\quad p_{n-2}\in\Pol_{n-2}(E).
\end{equation}
\item[(c)] If $p$ vanishes on $E\cap\big\{cx_i=x_j\}$ for some $c\geq0$ and $i\neq j$, it can be written as
\begin{equation}\label{eqn64}
p(x)=(-cx_i+x_j)p_{n-1}(x) \quad\text{for some}\quad p_{n-1}\in\Pol_{n-1}(E).
\end{equation}
\end{itemize}
\end{lemma}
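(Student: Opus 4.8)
\emph{Plan.} All three parts reduce to one elementary fact about dividing by a linear form, together with the observation that the relevant ``no-jump'' sets are themselves full-dimensional simplices sitting inside affine hyperplanes. The fact is: if $\ell$ is a non-constant affine function on $\R^m$ and $q\in\Pol(\R^m)$ vanishes on a subset of the hyperplane $H=\{\ell=0\}$ with nonempty interior relative to $H$, then $q=\ell\tilde q$ for some $\tilde q\in\Pol(\R^m)$ with $\deg\tilde q\le\deg q-1$. Indeed, after a linear change of coordinates we may take $\ell=x_m-c$; polynomial division gives $q=(x_m-c)\tilde q+r$ with $r$ free of $x_m$, the hypothesis forces $r$ to vanish on a nonempty open subset of $\R^{m-1}$ and hence $r\equiv0$, and $\deg((x_m-c)\tilde q)=1+\deg\tilde q$ gives the degree bound. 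The recurring trick is then: given $p\in\Pol_n(E)$, choose a representative $q\in\Pol_n(\R^d)$, split off the part of $q$ that survives on the relevant coordinate subspace, observe that this part vanishes on a sub-simplex of $E$ and is therefore divisible by the affine form cutting out that sub-simplex's affine hull, and finally rewrite that affine form modulo the simplex constraint $\mathbf 1^\top x=1$ so as to produce the asserted factorization on $E$.

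\emph{Parts (a) and (b).} For (a) take $i=1$, $j=2$ without loss of generality and write the representative as $q(x)=q_0(x_3,\dots,x_d)+x_1A(x)+x_2B(x)$, where $x_1A$ collects the monomials of $q$ divisible by $x_1$, $x_2B$ collects those of the remainder divisible by $x_2$, and $q_0=q(0,0,x_3,\dots,x_d)$; thus $\deg q_0\le n$ and $\deg A,\deg B\le n-1$. On $E\cap\{x_1=x_2=0\}$ we have $p=q_0$, and that set is $\{x_3,\dots,x_d\ge0,\ \sum_{k\ge3}x_k=1\}$, full-dimensional in the hyperplane $\{\sum_{k\ge3}x_k=1\}$; hence $q_0=(\sum_{k\ge3}x_k-1)s$ with $\deg s\le n-1$. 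Since $\sum_{k\ge3}x_k-1=-(x_1+x_2)$ on $E$, we get $p=x_1(A-s)|_E+x_2(B-s)|_E$, which is the claim. The same argument with a single index yields the sub-claim $(\dagger)$: if $r\in\Pol_m(E)$ vanishes on $E\cap\{x_k=0\}$, then $r=x_kr'$ for some $r'\in\Pol_{m-1}(E)$. Part (b) then follows by applying $(\dagger)$ twice: vanishing on $E\cap\{x_i=0\}$ gives $p=x_i\tilde p$ with $\tilde p\in\Pol_{n-1}(E)$, and since $x_i$ is not the zero function on the face $E\cap\{x_j=0\}$, whose ring of polynomial functions is an integral domain, $\tilde p$ must vanish on $E\cap\{x_j=0\}$; applying $(\dagger)$ once more gives $\tilde p=x_jp_{n-2}$ with $p_{n-2}\in\Pol_{n-2}(E)$, so $p=x_ix_jp_{n-2}$.

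\emph{Part (c) and main obstacle.} For (c) take $i=1$, $j=2$ and perform the degree-preserving linear change of coordinates $u:=x_2-cx_1$ (keeping $x_1,x_3,\dots,x_d$), so that the representative reads $\widehat q(x_1,u,x_3,\dots,x_d)=\widehat q_0(x_1,x_3,\dots,x_d)+uR$ with $\widehat q_0=\widehat q|_{u=0}$, $\deg\widehat q_0\le n$, $\deg R\le n-1$. On $E\cap\{cx_1=x_2\}=E\cap\{u=0\}$ we have $p=\widehat q_0$, and in the coordinates $(x_1,x_3,\dots,x_d)$ this set is $\{x_1,x_3,\dots,x_d\ge0,\ (1+c)x_1+\sum_{k\ge3}x_k=1\}$, a full-dimensional simplex in the hyperplane $\{(1+c)x_1+\sum_{k\ge3}x_k=1\}$ (here $c\ge0$ ensures all defining coefficients are positive). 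Hence $\widehat q_0=\big((1+c)x_1+\sum_{k\ge3}x_k-1\big)s$ with $\deg s\le n-1$, and since $(1+c)x_1+\sum_{k\ge3}x_k-1=cx_1-x_2=-u$ on $E$, we conclude $p=(-cx_1+x_2)(R-s)|_E$ with $(R-s)|_E\in\Pol_{n-1}(E)$. The argument is essentially routine; the points requiring care are the bookkeeping of degrees — division by a degree-one form must lower the degree by exactly one, which is why the sub-simplices must be genuinely full-dimensional in their affine hulls so that the division fact applies — and, in part (b), the justification that after dividing out $x_i$ the remaining factor still vanishes on the second face.
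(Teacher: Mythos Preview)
Your proof is correct and takes a somewhat different route from the paper's. The paper exploits the fact that every $p\in\Pol_n(E)$ admits a \emph{homogeneous} degree-$n$ representative (since $\sum_k x_k=1$ on $E$), writes $p(x)=\sum_{|\bn|=n}p_{\bn}x^{\bn}$, and for (a) and (b) argues directly that the coefficients $p_{\bn}$ with $n_i=n_j=0$ (resp.\ $n_i=0$ or $n_j=0$) all vanish, because the surviving monomials form a homogeneous polynomial that vanishes on a full lower-dimensional simplex and hence is identically zero. For (c) the paper applies the affine substitution $\overline p(x):=p\big(x+\frac{cx_i}{1+c}(e_j-e_i)\big)$, which carries the face $\{x_j=0\}$ onto the hyperplane slice $\{cx_i=x_j\}$, and then invokes the single-index case of (a). Your approach instead keeps an arbitrary representative, isolates the part $q_0$ independent of the distinguished coordinates, and factors the affine simplex constraint out of $q_0$ via polynomial division by a linear form; for (b) you iterate the single-coordinate case $(\dagger)$ using that the face $E\cap\{x_j=0\}$ has an integral-domain ring of polynomial functions. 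The paper's homogenization makes (a) and (b) essentially one-line observations about monomial supports, whereas your method is more uniform across the three parts and makes explicit why the constraint $\sum_k x_k=1$ is what produces the required degree drop. Both arguments are clean; neither is materially shorter.
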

\begin{proof}
Since every affine function on $E$ can be written as a linear one, there is a real collection $(p_{\bn})_{|\bn|=n}$ such that 
$p(x)=\sum_{|\bn|=n}p_{\bn}x^\bn,$
for all $x\in E$. Observe that for all $x\in E\cap\{x_i= x_j=0\}$ we have that
$$0=p(x)=\sum_{|\bn|=n\atop n_i=n_j=0} p_{\bn} x^\bn.$$
Assume without loss of generality that $i=d$ and $j=d-1$ (resp. $i=j=d$ if $i=j$) and note that, the polynomial $q\in\Pol(\R^{j-1})$ given by
$$q(x):=\sum_{|\bn|=n\atop n_i=n_j=0} p_{\bn} x^{\overline \bn},$$
where $\overline \bn=\sum_{k=1}^{j-1}n_ke_k$, is a homogeneous polynomial vanishing on the simplex. This directly implies that $q=0$ and hence $p_{\bn}=0$ for all $|\bn|=n$ such that $n_i=n_j=0$. We can thus conclude that $p$ satisfies (\ref{eqn62}).
% $$p(x)=x_i\sum_{|\bn|=n-1}p_{\bn+e_i}x^\bn+x_j\sum_{|\bn|=n-1\atop \bn_i=0}p_{\bn+e_j}x^\bn.$$
 
Proceeding as before for the second part,
we obtain that $p_\bn=0$  for all $|\bn|=n$ such that $n_i=0$ or $n_j=0$ and can thus conclude that $p$ satisfies (\ref{eqn63}).
%$$p(x)=x_ix_j\sum_{|\bn|=n-2}p_{\bn+e_i+e_j}x^\bn.$$

Finally, for the third part it is enough to note that the polynomial $\overline p\in\Pol_n(E)$ given by
$$\overline p(x):=p \Big(x+\frac {cx_i}{1+c}(e_j-e_i)\Big)$$
vanishes on $E\cap\{x_j=0\}$. By (a) this gives us that
$$p(x)=\overline p\big(x+ {cx_i}(e_i-e_j)\big)=(-cx_i+x_j)\overline p^j_{n-1}(x)$$
on $E\cap\{x_j\geq cx_i\}$ and thus on $E$, proving that $p$ satisfies (\ref{eqn64}).
\end{proof}

%form of gamma
\begin{lemma}\label{lem:formgamma}
Let $\mu$ be a nonzero measure on $\big(\Delta^d\big)^d\setminus\{e_1,\ldots,e_d\}$. The function $\gamma: E\times (\Delta^d)^d\to\R$ given by $\gamma(x,y)=\sum_{i=1}^d(y^i-e_i) x_i$ can be represented as 
\begin{align}\label{eq:linpolform}
\gamma(x,y)=H(y)P_1(x)\quad \mu\textrm{-a.s.},
\end{align}
for a measurable function $H:(\Delta^d )^d \to \mathbb{R}^d$ and $P_1 \in \Pol_1(E)$, if and only if one of the following cases holds true
\begin{enumerate}
\item[(a)] $\gamma(x,y)=(y^i-e_i)x_i$ $\mu\textrm{-a.s.}$ for some $i \in \{1, \ldots, d\}$.
\item[(b)] $\gamma(x,y)=y^j_i (-cx_i+x_j)(e_i-e_j)$ $\mu\textrm{-a.s.}$ for some $i\neq j$ and $c>0$. In this case $y^j_i\in(0,\frac 1 c\land1]$ $\mu$-a.s. 
\end{enumerate} 
\end{lemma}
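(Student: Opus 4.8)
The ``if'' direction is immediate: in case~(a) take $P_1(x)=x_i$ and $H(y)=y^i-e_i$, and in case~(b) take $P_1(x)=-cx_i+x_j$ and $H(y)=y^j_i(e_i-e_j)$. In both cases $H$ is measurable (a composition of coordinate projections) and $P_1\in\Pol_1(E)$, and expanding $\gamma(x,y)=\sum_{i=1}^d(y^i-e_i)x_i$ shows that the asserted factorisation $\gamma(x,y)=H(y)P_1(x)$ holds identically. The rest of the plan concerns the ``only if'' direction.

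So assume $\gamma(x,y)=H(y)P_1(x)$ for $\mu$-a.e.\ $y$ and all $x\in E$. The crucial first step is to evaluate this identity at the vertices $x=e_k$, $k=1,\dots,d$. Since $\gamma(e_k,y)=y^k-e_k$ and $P_1$ is affine, setting $\alpha_k:=P_1(e_k)$ gives $y^k-e_k=\alpha_k H(y)$ for every $k$; thus \emph{all} the jump-direction vectors $y^k-e_k$ are proportional to the single vector $H(y)$, with the proportionality constants forming the fixed vector $\alpha=(\alpha_1,\dots,\alpha_d)$, and $P_1(x)=\sum_k\alpha_k x_k$ on $\Delta^d$. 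Because $\mu$ is supported away from the one point $(e_1,\dots,e_d)$ of $(\Delta^d)^d$ at which $\gamma(\fdot,y)$ vanishes identically, we have $H(y)\neq0$ for $\mu$-a.e.\ $y$ and hence $\alpha\neq0$ (otherwise $P_1\equiv0$ on $\Delta^d$ and $\gamma(\fdot,y)\equiv0$ $\mu$-a.s.). From $y^k\in\Delta^d$ we then read off: $\mathbf 1^\top(y^k-e_k)=0$ yields $\mathbf 1^\top H(y)=0$, so $H(y)$ has both a strictly positive and a strictly negative coordinate; and nonnegativity of the off-diagonal entries of $y^k$ yields the sign constraints $\alpha_k H_\ell(y)\geq0$ for all $\ell\neq k$.

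The heart of the argument is to deduce from these sign constraints that $\alpha$ has \emph{at most one} strictly positive and at most one strictly negative coordinate. Indeed, if $\alpha_k>0$ then $H_\ell(y)\geq0$ for every $\ell\neq k$, so the (nonempty) set of strictly negative coordinates of $H(y)$ must be exactly $\{k\}$; hence two distinct positive coordinates of $\alpha$ are impossible, and symmetrically for negative coordinates. This leaves two shapes for $\alpha$. If $\alpha$ is a nonzero multiple of a single $e_i$, then $\alpha_k=0$ for $k\neq i$ forces $y^k=e_k$ and $\gamma(x,y)=(y^i-e_i)x_i$, which is case~(a). If $\alpha$ has one positive coordinate at $j$ and one negative coordinate at $i$, then $y^k=e_k$ for $k\notin\{i,j\}$, and substituting $y^i,y^j\in\Delta^d$ back into the inequalities forces $H_k(y)=0$ for $k\notin\{i,j\}$, hence $H(y)=H_i(y)(e_i-e_j)$; identifying $y^j_i=\alpha_j H_i(y)$ and setting $c:=-\alpha_i/\alpha_j>0$ gives $\gamma(x,y)=y^j_i(-cx_i+x_j)(e_i-e_j)$, while $y^i=(1-cy^j_i)e_i+cy^j_i e_j\in\Delta^d$ and $H(y)\neq0$ give $y^j_i\in(0,\tfrac1c\wedge1]$; this is case~(b). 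Since $\alpha$ is fixed, the applicable case, together with the indices $i,j$ and the constant $c$, is the same for $\mu$-a.e.\ $y$, which completes the proof.

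The main obstacle is precisely this passage from ``$\gamma$ factors through a one-dimensional direction $H(y)$'' to the rigid combinatorial conclusion that $\alpha$ can have at most one positive and at most one negative coordinate; all the preceding and subsequent steps (the vertex evaluation, reading off the simplex constraints, and the final identification of $H(y)$ as a multiple of $e_i-e_j$ with the range bound on $y^j_i$) are routine manipulations with the defining equalities and inequalities of $\Delta^d$.
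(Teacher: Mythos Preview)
Your proof is correct and follows essentially the same approach as the paper: both evaluate the factorisation at the vertices $x=e_k$ to obtain $y^k-e_k=\alpha_k H(y)$, and then exploit the simplex constraints on the $y^k$ to pin down the nonzero pattern of the coefficient vector. The only difference is organisational: the paper splits into the cases where one, two, or at least three of the $C_k$ are nonzero (deriving opposite signs in the two-nonzero case implicitly from $c=-C_i/C_j>0$), whereas you argue directly via the sign constraints $\alpha_kH_\ell(y)\ge0$ for $\ell\ne k$ together with $\mathbf 1^\top H(y)=0$ that $\alpha$ has at most one positive and one negative entry---this is arguably a cleaner way to package the same observation.
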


\begin{proof}
First assume that (\ref{eq:linpolform}) holds true.
Since $P_1 \in \Pol_1(E)$, and as every affine function on $E$ has a linear representation, we can write
$
P_1(x)=C^{\top}x, 
$
for some $C \in \mathbb{R}^d$. If $C=0$, the support of $\mu$ has to be contained in $\{e_1,\ldots,e_d\}$, which is not possible by assumption.

If $C_i\neq 0$ for some $i \in \{1, \ldots,d\}$, item (a) follows if $C_j=0$ for all $j \neq i$.

If $C_i$ and $C_j$ are nonzero for some $i\neq j$, item (b) follows if $C_\ell=0$ for all $\ell\notin\{i,j\}$. Indeed, by assumption we have $(y^k-e_k)=C_k H(y)$ for $k\in \{i,j\}$ and thus
\begin{align*}
y^i-e_i=\frac{C_i}{C_j}(y^j-e_j).
\end{align*}
Since $y^i,y^j\in\Delta^d$ $\mu$-a.s., we can conclude that $y^i_\ell=y^j_\ell=0$ for all $\ell\notin \{i, j\}$ and hence 
\begin{equation}\label{eqn54}
\frac{y^i_j}{y^j_i}=\frac{1-y^i_i}{y^j_i}=-\frac{C_i}{C_j}=:c
\end{equation}
proving that the conditions of item (b) hold true. In this case $y_i^j\in(0,\frac 1 c \land 1]$ $\mu$-a.s. by (\ref{eqn54}).

Finally, if $C_i\neq0$ for at least three different values of $i$, the same reasoning as for case (b) implies $y_\ell^i=0$ for all $\ell\neq i$ and thus $H=0$ $\mu$-a.s., which is not possible by assumption.

The converse direction is clear.
\end{proof}

%the nice form of a
\begin{lemma}\label{lem:forma}
The following assertions are equivalent.
\begin{enumerate}
\item 
The matrix $a(x)\in\mathbb S_+^d$ satisfies $a\mathbf 1=0$, $a_{ij} \in \Pol_2(E)$, and
$
a_{ii}=0 \textrm{ on } E \cap \{x_i=0\}
$.
\item 
The matrix $a(x)$ satisfies
$$
a_{ii}(x)=\sum_{i\neq j}\alpha_{ij}x_ix_j\quad\text{and}\quad a_{ij}(x)=-\alpha_{ij}x_ix_j \quad\textrm{ for all }  i\neq j,
$$
for some $\alpha_{ij}=\alpha_{ji} \in \mathbb{R}_+$.
\end{enumerate}
\end{lemma}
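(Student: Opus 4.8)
The plan is to prove the two implications separately. The implication (ii) $\Rightarrow$ (i) is a direct verification; the implication (i) $\Rightarrow$ (ii) is where the geometry of the simplex enters, through Lemma~\ref{clem1}(b).

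For (ii) $\Rightarrow$ (i): given $a$ of the stated form, each $a_{ij}$ is visibly an element of $\Pol_2(E)$, and $a_{ii}$ vanishes on $E\cap\{x_i=0\}$ because every monomial of $a_{ii}$ carries the factor $x_i$. The $i$th entry of $a\mathbf 1$ equals $a_{ii}(x)+\sum_{j\neq i}a_{ij}(x)=\sum_{j\neq i}\alpha_{ij}x_ix_j-\sum_{j\neq i}\alpha_{ij}x_ix_j=0$. For positive semidefiniteness I would expand, for $x\in E$ and $v\in\R^d$,
\[
v^\top a(x)v=\sum_{i\neq j}\alpha_{ij}x_ix_jv_i^2-\sum_{i\neq j}\alpha_{ij}x_ix_jv_iv_j=\sum_{i<j}\alpha_{ij}x_ix_j(v_i-v_j)^2\ge0,
\]
using $\alpha_{ij}=\alpha_{ji}\ge0$ and $x_i,x_j\ge0$ on $E$.

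For (i) $\Rightarrow$ (ii): fix $i\neq j$. For $x\in E\cap\{x_i=0\}$ the hypothesis $a_{ii}(x)=0$ together with $a(x)\in\mathbb S^d_+$ forces the entire $i$th row (and column) of $a(x)$ to vanish — a zero diagonal entry of a positive semidefinite matrix annihilates its row, since $v=e_i+te_k$ gives $2tM_{ik}+t^2M_{kk}\ge0$ for all $t$. In particular $a_{ij}(x)=0$, and by symmetry $a_{ji}=a_{ij}$ vanishes on $E\cap\{x_j=0\}$ as well. Hence $a_{ij}\in\Pol_2(E)$ vanishes on $E\cap(\{x_i=0\}\cup\{x_j=0\})$, so Lemma~\ref{clem1}(b) yields $a_{ij}(x)=x_ix_j\,p_{ij}$ with $p_{ij}\in\Pol_0(E)$, i.e.\ a genuine constant; set $\alpha_{ij}:=-p_{ij}$, noting $\alpha_{ij}=\alpha_{ji}$ by symmetry of $a$. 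Then $a\mathbf 1=0$ gives $a_{ii}(x)=-\sum_{j\neq i}a_{ij}(x)=\sum_{j\neq i}\alpha_{ij}x_ix_j$, which is the asserted diagonal form. To check $\alpha_{ij}\ge0$ I would evaluate $a_{ii}$ at $x=te_i+(1-t)e_j\in E$ with $t\in(0,1)$: all coordinates except $x_i=t$ and $x_j=1-t$ vanish, so $a_{ii}(x)=\alpha_{ij}t(1-t)$, which is $\ge0$ since diagonal entries of positive semidefinite matrices are nonnegative; hence $\alpha_{ij}\ge0$. (When $d=1$ the claim is vacuous: $a\mathbf 1=0$ reads $a_{11}=0$, matching the empty-sum form.)

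I do not anticipate a serious obstacle: once Lemma~\ref{clem1}(b) is available, both directions are elementary linear algebra. The two points demanding a little care are (a) invoking the propagation of a vanishing diagonal entry of a positive semidefinite matrix to the whole corresponding row and column — this is exactly what licenses the application of Lemma~\ref{clem1}(b) to $a_{ij}$ — and (b) the degree bookkeeping ensuring the quotient $p_{ij}$ is a constant, not a nonconstant polynomial.
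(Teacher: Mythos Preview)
Your proof is correct and follows essentially the same route as the paper: invoke Lemma~\ref{clem1}(b) to identify the off-diagonal entries, use $a\mathbf 1=0$ for the diagonal, and read off $\alpha_{ij}\ge0$ from the diagonal at the edge midpoint $(e_i+e_j)/2$. The paper is terser in two places where you spell things out: it does not explicitly record the positive-semidefiniteness argument that a zero diagonal entry kills its row (needed before applying Lemma~\ref{clem1}(b)), and for the direction (ii)~$\Rightarrow$~(i) it defers positive semidefiniteness to \cite{Filipovic/Larsson:2016}, whereas your direct computation $v^\top a(x)v=\sum_{i<j}\alpha_{ij}x_ix_j(v_i-v_j)^2$ settles it in one line.
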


\begin{proof}

We start by proving (i) $\Rightarrow$ (ii): By Lemma~\ref{clem1} we already know that for all $i\neq j$ we have
$a_{ij}=-\alpha_{ij}x_ix_j$
for some $\alpha_{ij}\in \R$.
Moreover, as $a\mathbf 1=0$ on $E$, we also have that
$$a_{ii}(x)=-\sum_{j\neq i}a_{ij}(x)=\sum_{i\neq j} \alpha_{ij}x_ix_j$$
 for all $x\in E$. Since $a\in\mathbb S_+^d$ on $E$ and 
 $\alpha_{ij}=4a_{ii}\left((e_i+e_j)/2\right)$
 for all $i\neq j$,
it follows that $\alpha_{ij} \in \mathbb{R}_+$, which finishes the proof of the first direction.
Concerning (ii) $\Rightarrow$ (i), the only condition which is not obvious is positive semidefiniteness of $a$ on $E$, which follows exactly as in the proof of Proposition 6.6 in~\cite{Filipovic/Larsson:2016}.
\end{proof}

\begin{proof}[Proof of Theorem~\ref{th:mainsimplex}]
As $\Gcal$ is polynomial  and its martingale problem is well-posed, the conditions of Lemma~\ref{thm6} are satisfied.
As in Remark \ref{rem3}, we can assume without loss of generality that $\mu$ is compactly supported and all its moments of order greater or equal two are thus finite. 
Analogously to (\ref{eqn7}) we set then
\begin{equation}\label{eqn44}
p_\bn:=\int\gamma^\bn(\fdot,y)\mu(\de y)\quad\text{and}\quad
r_\bn:=\int\xi^\bn\nu(\fdot,\de\xi)=\lambda p_\bn
\end{equation}
for all $|\bn|\geq2$. Note that $p_\bn\in\Pol_{|\bn|}(E)$ for all $|\bn|\geq2$ by the integrability conditions on $\mu$. By condition (iv) of Lemma~\ref{thm6} we also have that $r_\bn\in\Pol_{|\bn|}(E)$ for all $|\bn|\geq3$. By Remark \ref{rem10} we know that 
$$\lambda(x) = \frac{\int |\xi|^4 \nu(x,\de\xi)}{\int |\gamma(x,y)|^4 \mu(\de y)}\mathds1_{\{\int |\gamma(x,y)|^4 \mu(\de y)\neq0\}},$$
and hence condition $\nu\big(x,(E-x)^c\big)=0$ implies that $\mu$ can be chosen supported on $(\Delta^d)^d$ and such that
$$\gamma(x,y)=\sum_{i=1}^d (y^i-e_i) x_i\quad\mu\text{-a.s.}$$
By definition of affine jump sizes, the measure $\mu$ has to be square-integrable.

Concerning the statement on the drift this is a consequence of Lemma~\ref{thm6}. Indeed (iv) yields the affine (and thus linear) form of the drift, (ii) leads to 
\begin{equation}\label{reqn1}
\sum_{j\neq i} \left(B_{ij}x_j -\lambda(x)\int y^j_ix_j \mu({\de y}) \right)\geq 0, \quad x \in E \cap \{x_i=0\}
\end{equation}
and finally $B^{\top} \mathbf{1}=0$ is a consequence of (iii), namely $b^\top\mathbf 1=0$ for all $x\in E$. Since condition \eqref{reqn1}  yields $\sum_{j\neq i} B_{ij} x_j\geq0$, choosing $x=e_j $ we get  $B_{ij}\geq0$ for $j\neq i$ and $B_{ii}=-\sum_{j\neq i}B_{ji}$ for all $i\in\{1,\ldots,d\}$.
%Indeed taking again $x=e_j$, we have $B_j^{\top} \mathbf{1} +(\beta^{\top} \mathbf{1})=0$ for all $j$, where $B_j^{\top}$ denotes the $j^{\textrm{th}}$ row of $B^{\top}$.
We will now consider four complementary assumptions, which will lead to Type~0 to~3.

Assume that $\nu(x,\de\xi)=0$.
Then by Lemma~\ref{thm6} we can apply Lemma \ref{lem:forma} to conclude that $a$ satisfies~\eqref{eq:forma}. This proves that in this case $\Gcal$ is an operator of Type~0.

Assume now that $\nu(x,\de\xi)\neq0$ and $\lambda$ can be chosen to be constant. We can then without loss of generality set $\lambda=1$. Moreover, since in this case $r_{e_i+e_j}\in\Pol_2(E)$ for all $i,j\in\{1,\ldots,d\}$, condition (iv) of Lemma~\ref{thm6} implies that the entries diffusion matrix $a_{ij}\in\Pol_2(E)$.
We can thus conclude as before that $a$ can be chosen to be of form~\eqref{eq:forma}. Finally, condition \eqref{reqn1} can be rewritten as $\sum_{j\neq i} \big(B_{ij}-\int y^j_i \mu({\de y}) \big)x_j\geq 0$ for all $x \in E \cap \{x_i=0\}$, which yields $B_{ij}-\int y^j_i \mu({\de y})$ for all $i\neq j$. As a result, $\Gcal$ is an operator of Type~1.

Assume now that $\nu(x,\de\xi)\neq0$ and $\lambda$ cannot be chosen to be constant. We already know that $\lambda=\frac{p(x)}{q(x)}$ for some $p,q \in \Pol(E)$. %Note due to the linearity of $x \mapsto \gamma(x,y)$ and Assumption~\ref{A} the degree of $q$ must be higher or at least equal to the degree of $p$. %If $\lambda \neq  \textrm{const}$, then $q(x) \neq \textrm{const}$ and 
Supposing without loss of generality that $p(x)$ and $q(x)$ are coprime polynomials, we necessarily have due to Assumption~\ref{A} that $q(x)$ is a divisor of $\gamma_i(x,y)^3$ for each $i \in \{1,\ldots,d\}$ and $\mu$-a.e. $y \in (\Delta^d)^d$. 
Since $\gamma_i(\fdot,y)\in\Pol_1(E)$ $\mu$-a.s., this in turn implies that $\gamma(x,y)=H(y)P_1(x)$ $\mu$-a.s. with a measurable function $H:(\Delta^d )^d \to \mathbb{R}^d$ and $P_1 \in \Pol_1(E)$.

Choose now $i\in\{1,\ldots,d\}$ such that $\mu\big(H_i(y)\neq0\big)>0$. By equation (\ref{eqn44}) we have that
$$\lambda(x)=\frac{r_{3e_i}(x)}{p_{3e_i}(x)}=\frac{\overline r_{3e_i}(x)}{\big(P_1(x)\big)^3}$$
for some $\overline r_{3e_i}\in\Pol_3(E)$, for all $x\in E\setminus\{P_1=0\}$. Since in this case $\nu(x,\de \xi)=0$ for all $x\in E\cap\{P_1=0\}$, we are free to choose $\lambda(x)=0$ on this set.
By Lemma~\ref{thm6} we also know that $r_{2e_i}$ has to be a bounded function on $E$. Noting that for all $x\in E\setminus\{P_1=0\}$
$$r_{2e_i}=\frac{\overline r_{3e_i}(x)}{P_1(x)}\int H^2_i(y) \mu(\de y),$$
we see that this condition holds true if and only if $\overline r_{3e_i}(x)=0$ for all $x\in \{P_1=0\}$.  Since we know by Lemma \ref{lem:formgamma} that $P_1(x)=-cx_i+x_j$ for some $c\geq0$, by Lemma \ref{clem1} we thus have that
$$\lambda(x)=\frac{q_2(x)}{\big(P_1(x)\big)^2}\mathds1_{\{P_1\neq0\}}$$
for some $q_2\in\Pol_2(E)$. This in particular implies that $r_{e_k+e_\ell}\in\Pol_2(E\setminus\{P_1=0\})$ and hence, by condition (iv) of Lemma~\ref{thm6}, $a_{k\ell}\in\Pol_2(E\setminus\{P_1=0\})$ for all $k,\ell\in\{1,\ldots,d\}$. By the same condition we also have that for all $x\in E\cap\{P_1=0\}$
\begin{equation}\label{eqn55}
a_{k\ell}(x)=a_{k\ell}(x)+r_{e_k+e_\ell}(x)=\lim_{z \to x}a_{k\ell}(z)+q_2(x)\int H_k(y)H_\ell(y) \mu(\de y),
\end{equation}
and thus in particular, by positive semidefiniteness of $a(x)$ and condition (ii) of Lemma~\ref{thm6},
\begin{equation}\label{eqn58}
a_{ii}(x)=\lim_{z\to x}a_{ii}(z)=q_2(x)=0
\end{equation}
for all $x\in E\cap\{P_1=0\}\cap\{x_i=0\}$. Setting $a_{k\ell}^c\in\Pol_2(E)$ be such that $a_{k\ell}^c|_{E\setminus\{P_1=0\}}=a_{k\ell}|_{E\setminus\{P_1=0\}}$, we obtain that $a^c:=(a_{k\ell}^c)_{k\ell}$ satisfies the conditions of Lemma~\ref{lem:forma} and thus $a^c$ can be chosen to be of the form (\ref{eq:forma}).
By (\ref{eqn55}) we can then conclude that
$$a(x)=a^c(x)+q_2(x)\mathds1_{\{P_1=0\}}\int H(y)H(y)^\top \mu(\de y).$$
By Lemma \ref{lem:formgamma}, we know that there are only two complementary choices of $H$ and $P_1$.

The first choice is $H(y)=(y^i-e_i)$ and $P_1(x)=x_i$ for some fixed $i\in\{1,\ldots,d\}$. Then by (\ref{eqn58}) and Lemma \ref{clem1} we have $q_2(x)=q_1(x)x_i$ for some $q_1\in\Pol_1(E)$. Moreover, using that $q_1(x)=\sum_{j=1}^d q_1(e_j)x_j$,  condition \eqref{reqn1} can be rewritten as 
$$\sum_{j\neq k} \bigg(B_{kj} -{q_1(e_j)}\mathds1_{\{ x_i\neq 0\}}\int y^i_k \mu({\de y})\bigg)x_j \geq 0, \quad x \in E \cap \{x_k=0\}$$
for all $k\in\{1,\ldots,d\}$, which yields \eqref{eq:simplex 2 bdry} for all $k\neq i$ and $j\neq k$.
As a result,  $\Gcal$ is an operator of Type~2.

The second choice of $H$ and $P_1$ is $H(y)=y_i^j(e_i-e_j)$ and $P_1(x)=-cx_i+x_j$ for some $i,j\in\{1,\ldots,d\}$ such that $i\neq j$, where $y_i^j\in(0,\frac 1 c\land 1]$. Then by (\ref{eqn58}) and Lemma \ref{clem1} we have $q_2(x)=\sum_{k=1}^dq_{ik}x_ix_k+q_{jk}x_jx_k$ for some $q_{k\ell}\in\R$.  Since condition \eqref{reqn1} coincides with condition \eqref{eq:driftpos}, we can conclude that $\Gcal$ is an operator of Type~3.
\end{proof}

\end{appendices}

\bibliographystyle{plainnat}
\bibliography{160804bibl}

\begin{thebibliography}{35}
\providecommand{\natexlab}[1]{#1}
\providecommand{\url}[1]{\texttt{#1}}
\expandafter\ifx\csname urlstyle\endcsname\relax
  \providecommand{\doi}[1]{doi: #1}\else
  \providecommand{\doi}{doi: \begingroup \urlstyle{rm}\Url}\fi

\bibitem[Ackerer and Filipovi\'c(2016)]{Ackerer/Filipovic:2016}
D.~Ackerer and D.~Filipovi\'c.
\newblock Linear credit risk models.
\newblock 2016.
\newblock URL \url{http://ssrn.com/abstract=2782455}.

\bibitem[Ackerer et~al.(2016)Ackerer, Filipovi\'c, and Pulido]{ack_fil_pul_16}
D.~Ackerer, D.~Filipovi\'c, and S.~Pulido.
\newblock {The Jacobi Stochastic Volatility Model}.
\newblock 2016.
\newblock URL \url{http://ssrn.com/abstract=2782486}.

\bibitem[Bertsekas(1995)]{B:99}
D.P. Bertsekas.
\newblock \emph{Nonlinear Programming}.
\newblock Athena Scientific, 1995.

\bibitem[Biagini and Zhang(2016)]{BZ:16}
S.~Biagini and Y.~Zhang.
\newblock Polynomial diffusion models for life insurance liabilities.
\newblock \emph{Insurance: Mathematics and Economics}, 71:\penalty0 114 -- 129,
  2016.

\bibitem[B{\"o}ttcher et~al.(2013)B{\"o}ttcher, Schilling, and Wang]{BSW:13}
B.~B{\"o}ttcher, R.~Schilling, and J.~Wang.
\newblock \emph{L{\'e}vy Matters {III}: L{\'e}vy-Type Processes: Construction,
  Approximation and Sample Path Properties}.
\newblock Springer, 2013.

\bibitem[Courr{\`e}ge(1965)]{C:65}
P.~Courr{\`e}ge.
\newblock Sur la forme int{\'e}gro-diff{\'e}rentielle des op{\'e}rateurs de
  {$C^{\infty}_k$} dans {$C$} satisfaisant au principe du maximum.
\newblock \emph{S{\'e}minaire Brelot-Choquet-Deny (Th{\'e}orie du Potentiel)},
  10\penalty0 (2), 1965.

\bibitem[Cuchiero(2017)]{C:16}
C.~Cuchiero.
\newblock {Polynomial processes in stochastic portfolio theory}.
\newblock \emph{ArXiv e-prints}, 2017.
\newblock URL \url{http://arxiv.org/abs/1705.03647}.

\bibitem[Cuchiero et~al.(2012)Cuchiero, Keller-Ressel, and
  Teichmann]{Cuchiero/etal:2012}
C.~Cuchiero, M.~Keller-Ressel, and J.~Teichmann.
\newblock Polynomial processes and their applications to mathematical finance.
\newblock \emph{Finance and Stochastics}, 16:\penalty0 711--740, 2012.

\bibitem[Cuchiero et~al.(2016)Cuchiero, Gellert, Guiricich, Platts, Sookdeo,
  and Teichmann]{CGGPST:16}
C.~Cuchiero, K.~Gellert, M.~Guiricich, A.~Platts, S.~Sookdeo, and J.~Teichmann.
\newblock Polynomial models for market weights at work in stochastic portfolio
  theory: theory, tractable estimation, calibration and implementation.
\newblock \emph{Working paper}, 2016.

\bibitem[Dawson and Li(2006)]{DL:06}
D.~A. Dawson and Z.~Li.
\newblock Skew convolution semigroups and affine {M}arkov processes.
\newblock \emph{Annals of Probability}, 34\penalty0 (3):\penalty0 1103--1142,
  2006.

\bibitem[Delbaen and Shirakawa(2002)]{Delbaen/Shirakawa:2002}
F.~Delbaen and H.~Shirakawa.
\newblock An interest rate model with upper and lower bounds.
\newblock \emph{Asia-Pacific Financial Markets}, 9:\penalty0 191--209, 2002.

\bibitem[Duffie(2004)]{du:04}
D.~Duffie.
\newblock \emph{Credit risk modeling with affine processes}.
\newblock Publications of the Scuola Normale Superiore. Scuola Normale
  Superiore, 2004.

\bibitem[Epstein and Mazzeo(2013)]{EM:13}
C.~L. Epstein and R.~Mazzeo.
\newblock \emph{Degenerate diffusion operators arising in population biology}.
\newblock Number 185. Princeton University Press, 2013.

\bibitem[Etheridge(2011)]{E:11}
A.~Etheridge.
\newblock \emph{{Some Mathematical Models from Population Genetics: {\'E}cole
  D'{\'E}t{\'e} de Probabilit{\'e}s de Saint-Flour XXXIX-2009}}.
\newblock Springer, 2011.

\bibitem[Ethier and Kurtz(2005)]{EK:05}
S.N. Ethier and T.G. Kurtz.
\newblock \emph{Markov Processes: Characterization and Convergence}.
\newblock Wiley Series in Probability and Statistics. Wiley, 2005.

\bibitem[Fernholz(2002)]{F:02}
R.~Fernholz.
\newblock \emph{Stochastic Portfolio Theory}.
\newblock Applications of Mathematics. Springer-Verlag, New York, 2002.

\bibitem[Fernholz and Karatzas(2005)]{FK:05}
R.~Fernholz and I.~Karatzas.
\newblock Relative arbitrage in volatility-stabilized markets.
\newblock \emph{Annals of Finance}, 1\penalty0 (2):\penalty0 149--177, 2005.

\bibitem[Fernholz and Karatzas(2009)]{FK:09}
R.~Fernholz and I.~Karatzas.
\newblock Stochastic portfolio theory: an overview.
\newblock \emph{Handbook of numerical analysis}, 15:\penalty0 89--167, 2009.

\bibitem[Filipovi\'c and Larsson(2016)]{Filipovic/Larsson:2016}
D.~Filipovi\'c and M.~Larsson.
\newblock Polynomial diffusions and applications in finance.
\newblock \emph{Finance and Stochastics}, 20\penalty0 (4):\penalty0 931--972,
  2016.

\bibitem[Filipovi\'c and Trolle(2013)]{fi:13}
D.~Filipovi\'c and A.~B. Trolle.
\newblock The term structure of interbank risk.
\newblock \emph{Journal of Financial Economics}, 109\penalty0 (3):\penalty0 707
  -- 733, 2013.

\bibitem[Filipovi\'c et~al.(2016)Filipovi\'c, Gourier, and
  Mancini]{Filipovic/Gourier/Mancini:2015}
D.~Filipovi\'c, E.~Gourier, and L.~Mancini.
\newblock Quadratic variance swap models.
\newblock \emph{Journal of Financial Economics}, 119\penalty0 (1):\penalty0
  44--68, 2016.

\bibitem[Filipovi\'c et~al.(2017)Filipovi\'c, Larsson, and Trolle]{FLT:16}
D.~Filipovi\'c, M.~Larsson, and A.~Trolle.
\newblock Linear-rational term structure models.
\newblock \emph{The Journal of Finance}, 72\penalty0 (2):\penalty0 655--704,
  2017.

\bibitem[Gallardo and Yor(2006)]{Gallardo:2006}
L.~Gallardo and M.~Yor.
\newblock \emph{Some remarkable properties of the Dunkl martingales}, pages
  337--356.
\newblock Springer, 2006.

\bibitem[Gourieroux and Jasiak(2006)]{GJ:06}
C.~Gourieroux and J.~Jasiak.
\newblock Multivariate jacobi process with application to smooth transitions.
\newblock \emph{Journal of Econometrics}, 131\penalty0 (1):\penalty0 475--505,
  2006.

\bibitem[Haviland(1935)]{Haviland:1935}
E.~K. Haviland.
\newblock On the momentum problem for distribution functions in more than one
  dimension.
\newblock \emph{American Journal of Mathematics}, 57\penalty0 (3):\penalty0
  562--568, 1935.

\bibitem[Haviland(1936)]{Havliand:1936}
E.~K. Haviland.
\newblock On the momentum problem for distribution functions in more than one
  dimension. ii.
\newblock \emph{American Journal of Mathematics}, 58\penalty0 (1):\penalty0
  164--168, 1936.

\bibitem[{Hoh}(1998)]{Hoh:1998}
W.~{Hoh}.
\newblock {Pseudo differential operators generating Markov processes}.
\newblock \emph{Habilitationsschrift, Univerist\"at Bielefeld}, 1998.

\bibitem[Jacod and Shiryaev(2003)]{Jacod/Shiryaev:2003}
J.~Jacod and A.N. Shiryaev.
\newblock \emph{Limit Theorems for Stochastic Processes}.
\newblock Springer-Verlag, second edition, 2003.

\bibitem[Jarrow and Turnbull(1998)]{Jarrow1998}
R.~Jarrow and S.~Turnbull.
\newblock A unified approach for pricing contingent claims on multiple term
  structures.
\newblock \emph{Review of Quantitative Finance and Accounting}, 10\penalty0
  (1):\penalty0 5--19, 1998.

\bibitem[Jeanblanc et~al.(2009)Jeanblanc, Yor, and Chesney]{jb:09}
M.~Jeanblanc, M.~Yor, and M.~Chesney.
\newblock \emph{Mathematical Methods for Financial Markets}.
\newblock Springer Finance. Springer London, 2009.

\bibitem[Krabichler and Teichmann(2017)]{KT:17}
T.~Krabichler and J.~Teichmann.
\newblock {The Jarrow {\&} Turnbull Setting Revisited}.
\newblock \emph{Working paper}, 2017.

\bibitem[Kr\"uhner and Larsson(2017)]{KL:16}
P.~Kr\"uhner and M.~Larsson.
\newblock Affine processes with compact state space.
\newblock \emph{arXiv:1706.07579}, 2017.

\bibitem[Larsson and Pulido(2017)]{LP:17}
M.~Larsson and S.~Pulido.
\newblock Polynomial diffusions on compact quadric sets.
\newblock \emph{Stochastic Processes and their Applications}, 127\penalty0
  (3):\penalty0 901--926, 2017.

\bibitem[Marshall(2008)]{Marshall:2008}
M.~Marshall.
\newblock \emph{Positive Polynomials and Sums of Squares}.
\newblock Mathematical surveys and monographs. American Mathematical Society,
  2008.

\bibitem[Zheng(2006)]{zh:06}
H~Zheng.
\newblock Interaction of credit and liquidity risks: Modelling and valuation.
\newblock 30:\penalty0 391--407, 02 2006.

\end{thebibliography}

\end{document}